\newtheorem{theorem}{Theorem}[section]
\newtheorem{lemma}[theorem]{Lemma}
\newtheorem{proposition}[theorem]{Proposition}
\newtheorem{corollary}[theorem]{Corollary}
\theoremstyle{definition}
\newtheorem{definition}[theorem]{Definition}
\theoremstyle{remark}
\newtheorem{remark}[theorem]{Remark}
\numberwithin{equation}{section}
\newcommand{\cE}{\mathcal{E}}
\newcommand{\cN}{\mathcal{N}}
\newcommand{\cF}{\mathcal{F}}
\newcommand{\cB}{\mathcal{B}}
\newcommand{\cP}{\mathcal{P}}
\newcommand{\cC}{\mathcal{C}}
\newcommand{\cD}{\mathcal{D}}
\newcommand{\cI}{\mathcal{I}}
\newcommand{\bP}{\mathbb{P}}
\newcommand{\bE}{\mathbb{E}}
\newcommand{\R}{\mathbb{R}}
\newcommand{\la}{\lambda }
\newcommand{\si}{\sigma }
\newcommand{\ga}{\gamma }
\newcommand{\Ga}{\Gamma }
\newcommand{\ones}{\mathbbm{1}}
\newcommand{\one}{\mathbf{1}}
\newcommand{\Mod}{\operatorname{Mod}}
\newcommand{\Dom}{\operatorname{Dom}}
\newcommand{\Adm}{\operatorname{Adm}}
\newcommand{\Ext}{\operatorname{Ext}}
\newcommand{\MEO}{\operatorname{MEO}}
\newcommand{\Var}{\operatorname{Var}}
\newcommand{\co}{\operatorname{co}}
\newcommand{\cl}{\operatorname{cl}}
\newcommand{\bi}{\begin{itemize}}
	\newcommand{\ei}{\end{itemize}}
\newcommand{\Gahat}{\widehat{\Ga}}
\newcommand{\Gatil}{\widetilde{\Ga}}
\newcommand{\BL}{\operatorname{BL}}
\newcommand{\lbr}{\left\{ }
\newcommand{\rbr}{\right\} }
\newcommand{\argmax}{\operatorname{argmax}}
\pgfplotsset{compat=1.18}
\title{\bf Modulus for bases of matroids\thanks{This material is based upon work supported by the National Science Foundation under Grant n. 2154032.}}
\author[1]{Huy Truong}
\author[1]{Pietro Poggi-Corradini}
\affil[1]{\small Dept. of Mathematics, Kansas State University, Manhattan, KS 66506, USA.}
\date{}
\begin{document}
\maketitle
	
\begin{abstract}
		
In this work, we explore the application of modulus in matroid theory, specifically, the modulus of the family of bases of matroids. This study not only recovers  various concepts in matroid theory, including the strength, fractional arboricity, and principal partitions, but also offers new insights. In the process, we introduce the concept of a Beurling set. Additionally, our study revisits and provides an alternative approach to two of Edmonds's theorems related to the base packing and base covering problems. This is our stepping stone for establishing Fulkerson modulus duality for the family of bases. Finally, we provide a relationship between the base modulus of matroids and their dual matroids, and a complete understanding of the base $p$-modulus across all values of $p$.
\end{abstract}
\noindent {\bf Keywords:} Matroids, bases of matroids, Fulkerson duality, modulus, strength, fractional arboricity.

\vspace{0.1in}

\noindent {\bf 2020 Mathematics Subject Classification:} 90C27 (Primary) ; 05B35 (Secondary).
\section{Introduction}

The theory of modulus on graphs has been extensively studied in recent years \cite{modulus, pietrominimal, pietroblocking, pietrofairest, polynomial}. Discrete modulus  is a very flexible and general tool for measuring the richness of families of objects defined on a finite set.
Consider a graph $G =(V,E)$, a family of objects is  defined as a collection of usage vectors on the edgeset $E$. In particular, subsets of $E$ are associated to their indicator functions in $\R^E$. Various families have been thoroughly investigated, such as all $s$-$t$ paths, all $s$-$t$ cuts, and all spanning trees. The modulus of the family of all $s$-$t$ paths recovers minimum $s$-$t$ cuts, shortest $s$-$t$ paths, and the effective resistance from $s$ to $t$ \cite{modulus}.
	
The modulus of the family of spanning trees on undirected graphs was studied in \cite{pietrofairest}. Their work involves the probabilistic interpretation of the spanning tree modulus: 
\begin{equation}\label{eq:co}
		\min \left\{ \sum\limits_{e \in E} \eta(e)^2  :\eta \in \co(\Ga) \right\},
\end{equation}
where $\Ga$ is the spanning tree family of an undirected graph $G=(V,E)$, and $\co(\Ga)$ is the convex hull of indicator vectors of all spanning trees in $\Ga$. The authors in \cite{pietrofairest} show that the optimal density $\eta^*$ of the problem (\ref{eq:co}) is closely related to two well-investigated concepts in  graph theory, namely the strength \cite{gusfieldtree,cunninghamoptimal,cunninghamfaster, catlin1992} and the fractional arboricity of graphs \cite{fastapproximation,hong2016fractional, catlin1992}, and they also propose a deflation process for graphs that identifies a hierarchical structure of graphs.\\

The concepts of strength and fractional arboricity of graphs are actually special cases of the corresponding concepts for matroids. Let us revisit these notions. For a loopless matroid $M(E,\cI)$ on the ground set $E$ with a family of independent sets $\cI$ and rank function $r$, the {\it density} $\theta(M)$ of $M$ is defined as $|E|/r(E)$, the {\it strength} of $M$ is defined as:
\begin{equation}\label{eq:strength-problem}
S(M) := \min \left\{ \frac{|X|}{r(E) - r(E-X)} : X \subseteq E, r(E) > r(E-X) \right\},
\end{equation}
and the {\it fractional arboricity} of $M$ is defined as:
\begin{equation}
D(M) := \max \left\{ \frac{|X|}{r(X)} : X \subseteq E, r(X)>0 \right\}.
\end{equation}

In \cite{catlin1992}, the authors present several characterizations of matroids (or graphs) $M$ for which $S(M) = D(M)$ \cite[Theorem 6]{catlin1992}. One such characterization is that there exists a positive integer $t$ and a family $\cF$ of bases (or spanning trees) of $M$, such that each element $e \in E$ appears in exactly $t$ bases in $\cF$. For the graph case, these graphs have been called by several names, including homogeneous graphs \cite{pietrofairest}, strongly balanced graphs \cite{rucinski1986strongly}, and uniformly dense graphs \cite{catlin1992}. In this paper, a matroid $M$ for which 
\begin{equation}\label{eq:homogeneous-matroid}
S(M) = D(M),
\end{equation}
is called a {\it homogeneous matroid}.\\
	
The theory of principal partitions in graphs, matroids, and submodular systems has been developed since 1968. The principal partition can be considered as a decomposition of a discrete system into its components together with a
partially ordered structure of the set of components.
For an overview of the theory of principal partitions, we recommend the survey paper \cite{fujishige2009theory}. In particular, the hierarchical structure of arbitrary graphs identified through the deflation process described in \cite{pietrofairest} coincides with the principal partitions of graphic matroids in \cite{fujishige2009theory}. The optimal density $\eta^*$ of the problem (\ref{eq:co}) is equal to both the universal base and the lexicographically optimal base of the graphic polymatroid associated with the graph $G$, see Section \ref{sec:pp} for definitions and \cite{fujishige1980lexicographically}.
	
Consider a loopless matroid $M(E,\cI)$ with the rank function $r$. In \cite{catlin1992}, the authors established that $S(M)$ and $D(M)$ are closely related to the principal partition of $M$. The theory of principal partitions is not only a powerful tool but also has many applications, as discussed in \cite{fujishige2009theory}. Specifically, this theory shows that $S(M) = 1/ \eta^*_{max}$ and $D(M) = 1/ \eta^*_{min}$, where $\eta^*_{max}$ and $\eta^*_{min}$ are the maximum and minimum values, respectively, of the universal base of the polymatroid associated with $M$, see Section \ref{sec:pp}.

It is noteworthy that there is a fundamental relationship between strength and arboricity of a matroid and its dual, see \cite[Theorem 1]{catlin1992}. This relationship motivates one of our key results, detailed in Theorem \ref{thm:dualmat} below.\\
	
The theory of blocking and anti-blocking polyhedra was developed by Fulkerson in \cite{fulkersonblocking, fulkersonanti}. This theory describes dual relationships 
between sets of constraints in optimization problems, with applications in 
network flow and combinatorial optimization.
Let $\cB$ be the family of all bases of a loopless matroid $M = (E, \cI)$. Let $\co(\cB)$ be the convex hull of indicator vectors of all bases in $\cB$. The {\it dominant} of $\cB$ is defined as:
\begin{equation}\label{eq:dominant-bases}
\Dom(\cB) := \co(\cB) + \R^{E}_{\geq 0}.
\end{equation}
The base dominant $\Dom(\cB)$ is discussed in \cite{fulkersonanti}. Fulkerson stated in \cite{fulkersonanti} that the following result can be deduced from Edmonds's work in \cite{edmonds1965lehman}:
	
\begin{align}\label{eq:edmonsnash}
\Dom(\cB) = \left\{ \eta \in \R^E_{\geq 0} : \sum\limits_{e\in X} \eta(e) \geq r(E) - r(E-X), \text{ for all closed } X  \subseteq E \right\}.
\end{align}
	
In \cite{edmonds1965lehman}, Edmonds presents two theorems from which it can be derived that $S(M)$ is equal to the value of the base packing problem (which is equivalent to (\ref{eq:edmonsnash})), and $D(M)$ is equal to the value of the base covering problem,  see Section \ref{sec:pp} for definitions and \cite{catlin1992}. These theorems extend results from Nash-Williams and Tutte in \cite{edge-disjoint,on} and Nash-Williams in \cite{nash1964decomposition}. Specifically, for the case of the spanning tree dominant, Chopra in \cite{chopraon} provided the same inequality description as in (\ref{eq:edmonsnash}),  using a different proof. Moreover, he also provided a minimal inequality description of the spanning tree dominant.
	
In this paper, we investigate the modulus of the family of all bases of matroids, which we refer to as {\it  base modulus}. Here is a summary of our results.
\bi
\item	 In Section \ref{sec:basemodulus}, we analyze the base modulus and its probabilistic interpretation. In the process, we introduce the concept of Beurling sets.

\item In Section \ref{sec:strength}, we generalize several results from spanning tree modulus to base modulus. In particular, some of these results recover various properties of matroids related to their strength, fractional arboricity, and principal partition. 
	
\item In Section \ref{sec:basepacking}, we present an alternative proof strategy to two of Edmonds's theorems in \cite{edmonds1965lehman}, utilizing results from our study of base modulus. 
	
\item In Section \ref{sec:Fulkerson}, we provide the Fulkerson blocker family of the base family of a matroid. In other words, we derive a minimal inequality description of the base dominant. 
	
\item In Section \ref{sec:dualmatroid}, we establish a relationship between the base modulus of matroids and their dual matroids, enriching our understanding of the connection between a matroid and its dual.
\item In Section \ref{sec:otherp}, we demonstrate that $p$-modulus for the base family of a matroid can be deduced from the optimal density  of the $2$-modulus, and this provides a complete understanding of the base modulus across all values of $p$.
\ei

\begin{figure}[t]
	\begin{center}
			\begin{tikzpicture}[scale=1.3, auto, node distance=3cm,  thin]
				\begin{scope}[every node/.style={circle,draw=black,fill=black!100!,font=\sffamily\Large\bfseries}]
					
					\node (A) [scale=0.3]at (0,0) {};
					\node (B)[scale=0.3] at (1,0) {};
					\node (C)[scale=0.3] at (1.5,0.87) {};
					\node (D) [scale=0.3]at (1,1.73) {};
					\node (E)[scale=0.3] at (0,1.73) {};
					\node (F)[scale=0.3] at (-0.5,0.87) {};
					\node (G)[scale=0.3] at (0.25,0.43) {};
					\node (H) [scale=0.3]at (0.75,0.43) {};
					\node (I)[scale=0.3] at (1,0.87) {};
					\node (J)[scale=0.3] at (0.75,1.3) {};
					\node (K)[scale=0.3] at (0.25,1.3) {};
					\node (L)[scale=0.3] at (0,0.87) {};
					
					\node (A1) [scale=0.3]at (0+3,0) {};
					\node (B1)[scale=0.3] at (1+3,0) {};
					\node (C1)[scale=0.3] at (1.5+3,0.87) {};
					\node (D1) [scale=0.3]at (1+3,1.73) {};
					\node (E1)[scale=0.3] at (0+3,1.73) {};
					\node (F1)[scale=0.3] at (-0.5+3,0.87) {};
					\node (G1)[scale=0.3] at (0.25+3,0.43) {};
					\node (H1) [scale=0.3]at (0.75+3,0.43) {};
					\node (I1)[scale=0.3] at (1+3,0.87) {};
					\node (J1)[scale=0.3] at (0.75+3,1.3) {};
					\node (K1)[scale=0.3] at (0.25+3,1.3) {};
					\node (L1)[scale=0.3] at (0+3,0.87) {};
					
					\node (A2) [scale=0.3]at (0+1.5,0-2.6) {};
					\node (B2)[scale=0.3] at (1+1.5,0-2.6) {};
					\node (C2)[scale=0.3] at (1.5+1.5,0.87-2.6) {};
					\node (D2) [scale=0.3]at (1+1.5,1.73-2.6) {};
					\node (E2)[scale=0.3] at (0+1.5,1.73-2.6) {};
					\node (F2)[scale=0.3] at (-0.5+1.5,0.87-2.6) {};
					\node (G2)[scale=0.3] at (0.25+1.5,0.43-2.6) {};
					\node (H2) [scale=0.3]at (0.75+1.5,0.43-2.6) {};
					\node (I2)[scale=0.3] at (1+1.5,0.87-2.6) {};
					\node (J2)[scale=0.3] at (0.75+1.5,1.3-2.6) {};
					\node (K2)[scale=0.3] at (0.25+1.5,1.3-2.6) {};
					\node (L2)[scale=0.3] at (0+1.5,0.87-2.6) {};
					
				\end{scope}
				\begin{scope}[every edge/.style={draw=black,thin}]
					
					\draw  (G) edge node{} (H); 
					\draw  (H) edge node{} (I); 
					\draw  (I) edge node{} (J);
					\draw  (J) edge node{} (K);
					\draw  (K) edge node{} (L); 
					\draw  (L) edge node{} (G);

					\draw  (G1) edge node{} (H1); 
					\draw  (H1) edge node{} (I1); 
					\draw  (I1) edge node{} (J1);
					\draw  (J1) edge node{} (K1);
					\draw  (K1) edge node{} (L1); 
					\draw  (L1) edge node{} (G1);
					
					\draw  (G2) edge node{} (H2); 
					\draw  (H2) edge node{} (I2); 
					\draw  (I2) edge node{} (J2);
					\draw  (J2) edge node{} (K2);
					\draw  (K2) edge node{} (L2); 
					\draw  (L2) edge node{} (G2); 
					
					\draw  (G) edge node{} (I);
					\draw  (G) edge node{} (J);
					\draw  (G) edge node{} (K);
					\draw  (H) edge node{} (J);
					\draw  (H) edge node{} (K);
					\draw  (H) edge node{} (L);
					\draw  (I) edge node{} (K);
					\draw  (I) edge node{} (L);
					\draw  (J) edge node{} (L);

					\draw  (G1) edge node{} (I1);
					\draw  (G1) edge node{} (J1);
					\draw  (G1) edge node{} (K1);
					\draw  (H1) edge node{} (J1);
					\draw  (H1) edge node{} (K1);
					\draw  (H1) edge node{} (L1);
					\draw  (I1) edge node{} (K1);
					\draw  (I1) edge node{} (L1);
					\draw  (J1) edge node{} (L1);
					
					\draw  (G2) edge node{} (I2);
					\draw  (G2) edge node{} (J2);
					\draw  (G2) edge node{} (K2);
					\draw  (H2) edge node{} (J2);
					\draw  (H2) edge node{} (K2);
					\draw  (H2) edge node{} (L2);
					\draw  (I2) edge node{} (K2);
					\draw  (I2) edge node{} (L2);
					\draw  (J2) edge node{} (L2);
					
				\end{scope}
				
				\begin{scope}[every edge/.style={draw=black,dashed}]
					
					\draw  (A) edge node{} (B);
					\draw  (B) edge node{} (C);
					\draw  (C) edge node{} (D);
					\draw  (D) edge node{} (E);
					\draw  (E) edge node{} (F);
					\draw  (F) edge node{} (A);
					
					\draw  (A1) edge node{} (B1);
					\draw  (B1) edge node{} (C1);
					\draw  (C1) edge node{} (D1);
					\draw  (D1) edge node{} (E1);
					\draw  (E1) edge node{} (F1);
					\draw  (F1) edge node{} (A1);
					
					\draw  (A2) edge node{} (B2);
					\draw  (B2) edge node{} (C2);
					\draw  (C2) edge node{} (D2);
					\draw  (D2) edge node{} (E2);
					\draw  (E2) edge node{} (F2);
					\draw  (F2) edge node{} (A2);
					
					\draw  (A) edge node{} (G);
					\draw  (B) edge node{} (H);
					\draw  (C) edge node{} (I);
					\draw  (D) edge node{} (J);
					\draw  (E) edge node{} (K);
					\draw  (F) edge node{} (L);
					
					\draw  (A1) edge node{} (G1);
					\draw  (B1) edge node{} (H1);
					\draw  (C1) edge node{} (I1);
					\draw  (D1) edge node{} (J1);
					\draw  (E1) edge node{} (K1);
					\draw  (F1) edge node{} (L1);
					
					\draw  (A2) edge node{} (G2);
					\draw  (B2) edge node{} (H2);
					\draw  (C2) edge node{} (I2);
					\draw  (D2) edge node{} (J2);
					\draw  (E2) edge node{} (K2);
					\draw  (F2) edge node{} (L2);
					
				\end{scope}
				
				\begin{scope}[every edge/.style={draw=black,thick, dotted}]
					
					\draw  (C) edge node{} (F1);
					\draw  (B) edge node{} (E2);
					\draw  (D2) edge node{} (A1);
				\end{scope}
			\end{tikzpicture}
			
	\end{center}
	\caption{A graphic matroid where edges are styled according to the optimal density $\eta^*$. Solid edges are present with $\eta^* = \frac{1}{3}$, dashed edges with $\eta^* = \frac{1}{2}$ and dotted edges with  $\eta^* = \frac{2}{3}$.}\label{figure1}
\end{figure}


\textbf{Acknowledgment:} We would like to thank the anonymous referee for helpful comments and suggestions that improved the paper. We also would like to thank Nathan Albin for suggesting this study.

\section{Preliminaries}\label{sec:preliminaries}
\subsection{Matroids}
Let us begin by revisiting several definitions related to matroids. For a set $X$ we write $|X|$  for its cardinality, and if $Y$ is another set, then $X-Y$ is the relative complement of $Y$ in $X$.
\begin{definition}\label{def:independent-set}
Let $E$ be a finite set, the set system $M(E,\cI)$ is a matroid if the  following axioms are satisfied:
\bi
\item[(I1)] $\emptyset \in \cI$.
\item[(I2)] If $X \in \cI$ and $Y \subseteq X$ then $Y \in \cI$ ({\it Hereditary property}).
\item[(I3)] If $X,Y \in \cI$ and $|X| > |Y|$, then there exists $x \in X - Y$ such that $Y \cup \left\{x	\right\} \in \cI$ ({\it Exchange property}).
\ei
Every set in $\cI$ is called an {\it independent set}.
\end{definition}
Let  $M(E,\cI)$ be a matroid on the ground set $E$ with the set of  independent sets $\cI$. The maximal independent sets are called {\it bases},  the minimal dependent sets are called {\it circuits}. The {\it rank} function, $r : 2^E \rightarrow \mathbb{Z}_{+}$, defined on all subsets $X\subset E$ is given by:
\[r(X) := \max \left\{ |Y| : Y \subseteq X, Y \in \cI \right\}.\]
The {\it closure operator} $\cl: 2^E \rightarrow 2^E$ is a set function, defined as:
\begin{equation}\label{eq:closure-operator}
\cl(X) := \left\{ y \in E : r(X \cup \{ y \}) = r(X) \right\}.
\end{equation}
In matroid theory, these concepts play an important role. The following proposition gives some basic properties of matroids that we will use throughout the paper.
\begin{proposition}[\cite{mat}]\label{pro:basic}
Given a  matroid $M(E,\cI)$. Let $\cB$ be the set of bases of $M$ and let $\cC$ be the set of circuits of $M$. Let $r$ be the rank function.  Then, for all subsets $X,Y \subseteq E$, we have:
		\bi  
		\item[1.] If $B_1,B_2 \in\cB$ and $x\in B_1 - B_2$, then there exists $y \in B_2 - B_1$ such that $(B_1 - \lbr x \rbr ) \cup \lbr y \rbr \in \cB.$
		\item[2.] If $B \in \cB$ and $x \in E - B$, then there exists a unique circuit $C(x,B)$ contained in $B \cup \lbr x\rbr$ and containing $x$.
		\item[3.] If $B \in \cB,$ then for any $x \in E - B$, the set $(B - \lbr y \rbr ) \cup \lbr x \rbr$ is a base of $M$ if and only if $y \in C(x,B) $.
		\item[4.] $0 \leq r(X) \leq |X|$.
		\item[5.] If $Y \subseteq X$ then $r(Y) \leq r(X)$.
		\item[6.] $r(X) + r(Y) \geq r(X \cap Y) +r(X \cup Y)$.
		\item[7.] $X \in \cI \ \Leftrightarrow |X| = r(X).$
		\item[8.] $X \in \cB \ \Leftrightarrow |X| = r(X) = r(E).$
		\item[9.] $\cl(X) \supseteq X$.
		\item[10.] $\cl(X) = X \cup \lbr y \in E : y \in C \subseteq (X \cup y) \text{ for some } C \in \cC\rbr$ { \rm \cite{closure82}}.
		\ei
\end{proposition}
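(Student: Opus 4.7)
The plan is to establish the ten properties sequentially from the independence axioms (I1)--(I3), in the order in which each relies on those preceding it. Since the proposition is a catalogue of well-known facts, I would not reprove things from scratch beyond what is necessary; rather I would flag which axiom delivers each item and isolate the two points (submodularity, and the circuit description of closure) where real work is needed.

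First I would dispatch items 1--3. For item 1, apply the exchange axiom (I3) to the independent set $B_1 - \{x\}$ (of size $r(E)-1$) and $B_2$: this yields $y \in B_2$ with $(B_1 - \{x\}) \cup \{y\}$ independent, and a cardinality argument forces $y \in B_2 - B_1$ and the resulting set to be a base. For item 2, existence of a circuit in $B \cup \{x\}$ follows because $B \cup \{x\}$ is dependent (since $B$ is maximal). For uniqueness, suppose two distinct circuits $C_1, C_2 \subseteq B \cup \{x\}$ both contain $x$; pick $z \in C_1 - C_2$ and use the circuit elimination axiom to produce a circuit in $(C_1 \cup C_2) - \{x\} \subseteq B$, contradicting independence of $B$. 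Item 3 combines items 1 and 2: $(B-\{y\})\cup\{x\}$ is a base iff it is independent iff no circuit of $M$ lies in it, which using item 2 is equivalent to $y \in C(x,B)$.

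Next I would handle the rank-function items 4--8. Items 4, 5, 7, 8 are immediate from the definition of $r$ together with the hereditary property (I2) and the definition of a base. The substantive step is submodularity (item 6). The standard approach: pick a maximal independent set $I_\cap \subseteq X \cap Y$ with $|I_\cap| = r(X\cap Y)$, then repeatedly invoke (I3) to extend $I_\cap$ to a maximal independent $I_\cup \subseteq X \cup Y$ of size $r(X\cup Y)$. Setting $I_X := I_\cup \cap X$ and $I_Y := I_\cup \cap Y$, hereditary property gives $|I_X| \le r(X)$ and $|I_Y| \le r(Y)$, while $I_X \cap I_Y = I_\cap$ and $I_X \cup I_Y = I_\cup$ by construction, so $|I_X|+|I_Y| = r(X\cap Y) + r(X\cup Y)$, yielding the inequality. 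The main obstacle here is verifying that the extension of $I_\cap$ can be chosen so that $I_\cup \cap (X \cap Y)$ remains exactly $I_\cap$; this requires noting that any new element added when extending within $X \cup Y$ that lies in $X \cap Y$ would contradict maximality of $I_\cap$ in $X \cap Y$.

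Finally, items 9 and 10 concern the closure operator. Item 9 is immediate from the definition, since $r(X \cup \{y\}) = r(X)$ whenever $y \in X$. For item 10, I would prove both containments. If $y \in \cl(X) - X$, then $r(X \cup \{y\}) = r(X)$, so taking a maximal independent $I \subseteq X$ we have $I \cup \{y\}$ dependent, and the unique circuit of $I \cup \{y\}$ (item 2 applied after extending $I$ to a base of $X \cup \{y\}$, or directly) gives a circuit $C \subseteq X \cup \{y\}$ containing $y$. Conversely, if such a circuit $C$ exists with $y \in C \subseteq X \cup \{y\}$, then $C - \{y\} \subseteq X$ spans $y$, so adding $y$ to any maximal independent set in $X$ cannot increase the rank, giving $y \in \cl(X)$. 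This last equivalence is the point where one genuinely uses the full circuit description, and I would expect it (together with submodularity) to be the only step requiring careful bookkeeping; everything else is essentially unwinding definitions.
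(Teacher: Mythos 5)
The paper does not prove Proposition~\ref{pro:basic}: it is cited verbatim to the standard reference \cite{mat} as a collection of textbook facts, with no argument given. So there is no paper proof to compare against; what you have written is a reconstruction from the independence axioms, which is a different (more self-contained) route than the paper takes.

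Your reconstruction is largely sound, but two steps are glossed over in a way that would not survive a careful referee. First, in item~2 you invoke the circuit elimination axiom to prove uniqueness of the fundamental circuit, but circuit elimination is not among (I1)--(I3); if your stated starting point is the independence axioms, you would need to derive circuit elimination (or prove uniqueness directly, e.g.\ by noting that if two distinct circuits $C_1, C_2 \subseteq B \cup \{x\}$ both contain $x$, then $(C_1 \cup C_2) - \{x\} \subseteq B$ is independent, yet $|C_1 \cup C_2| - 1 > r(C_1 \cup C_2)$, a contradiction one can reach via submodularity of $r$). Second, in the converse direction of item~10, the phrase ``$C - \{y\}$ spans $y$, so adding $y$ to any maximal independent set in $X$ cannot increase the rank'' appeals to transitivity of span, which is exactly the kind of closure fact item~10 is meant to establish and should not be assumed. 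The clean argument is: suppose $I \subseteq X$ is maximal independent with $I \cup \{y\}$ independent; since $C - \{y\}$ is independent of size at most $|I|$, repeatedly apply (I3) to extend $C - \{y\}$ inside $I \cup \{y\}$ to an independent set $J$ of size $|I| + 1$; then $|J| > r(X)$ forces $y \in J$, hence $C = (C - \{y\}) \cup \{y\} \subseteq J$, contradicting independence of $J$. With those two points patched, the remaining items are correctly handled, and the submodularity argument in item~6 is the standard one and is stated accurately.
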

Next, we recall dual matroids. Given a matroid $M(E,\cI)$, the set
\[ \cB^* :=\{ X \subseteq E : \text{ there exists a base } B \in \cB \text{ such that } X = E - B \}\]
is the family of bases of the {\it dual matroid} $M^*$ on $E$. The {\it corank} function $r^*$ of $M$ is defined as the rank function of $M^*$, and for any subset $X \subseteq E$, we have:
\begin{equation}\label{eq:corank-rank}
r^*(X) = |X| - r(M) + r(E-X).
\end{equation}
Let us also recall the operations of {\it deletion}, {\it restriction}, and {\it contraction} in matroids. For a matroid $M(E,\cI)$ and a subset $X \subseteq E$, the set
\[ \cC(M \setminus X) = \{ C \subseteq E-X: C\in \cC(M) \},\]
defines the family of circuits for a matroid on $E-X$. The matroid $M \setminus X$ is called the {\it deletion} of $X$ from $M$. The {\it restriction} to $X$ in $M$ is denoted by $M|X$, and is the matroid on $X$ defined as $M|X := M \setminus (E - X)$.
The deletion operation behaves as in the following proposition.
\begin{proposition}[\cite{mat}]\label{pro:deletion}
Given a matroid $M(E,\cI)$ and a subset $X \subseteq E$, then:
\begin{enumerate}
\item $\cI(M \setminus X) = \{ Y \subseteq E -X : Y \in \cI(M)\}.$
\item $\cB(M \setminus X) = \text{maximal sets in } \{  B - X : B \in \cB(M) \}.$
\item $r_{M \setminus X}(Y) = r_{M}(Y)$ for all subsets $Y \subseteq E-X$.
\end{enumerate}
\end{proposition}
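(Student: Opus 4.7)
The plan is to prove the three parts in the order listed, since part (1) feeds directly into parts (2) and (3). The only input I need is the definition $\cC(M \setminus X) = \{C \subseteq E-X : C \in \cC(M)\}$ together with the standard fact that in any matroid the independent sets are exactly the subsets containing no circuit (and the bases are the maximal independent sets).

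For part (1), I would argue by double inclusion. If $Y \in \cI(M \setminus X)$, then $Y \subseteq E-X$ and $Y$ contains no circuit of $M \setminus X$. Any circuit of $M$ contained in $Y$ lies in $E-X$, hence is a circuit of $M \setminus X$; so $Y$ contains no circuit of $M$ either, i.e. $Y \in \cI(M)$. Conversely, if $Y \subseteq E-X$ and $Y \in \cI(M)$, then $Y$ contains no circuit of $M$, and in particular no circuit of $M \setminus X$, so $Y \in \cI(M \setminus X)$.

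For part (2), I would use (1) twice. To get the inclusion $\supseteq$, take a maximal element $B - X$ of $\{B' - X : B' \in \cB(M)\}$; by (1), $B - X \in \cI(M \setminus X)$. If some $Y \in \cI(M \setminus X)$ strictly contained $B - X$, then by (1) $Y \in \cI(M)$ and $Y \subseteq E-X$; extending $Y$ to a base $B'$ of $M$ would give $B-X \subsetneq Y \subseteq B' - X$, violating maximality. For the reverse inclusion, start with $B'' \in \cB(M \setminus X)$, extend it to a base $B$ of $M$, and note that $B - X \in \cI(M \setminus X)$ by (1) and $B'' \subseteq B - X$, whence $B'' = B - X$ by maximality of $B''$; the same maximality argument shows $B - X$ is maximal in $\{B' - X : B' \in \cB(M)\}$.

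For part (3), the claim is immediate from (1) and the definition of rank: if $Y \subseteq E-X$, then $\{Z \subseteq Y : Z \in \cI(M \setminus X)\} = \{Z \subseteq Y : Z \in \cI(M)\}$, so the maximum cardinalities on the two sides agree. There is no serious obstacle in this proposition; the only subtlety is in part (2), where one must keep track of \emph{two} different notions of maximality (within $\cI(M \setminus X)$ and within $\{B' - X : B' \in \cB(M)\}$) and verify that the extension-to-a-base step never leaves $E - X$.
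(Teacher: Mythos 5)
The paper cites this proposition from the matroid-theory literature (\cite{mat}) without giving a proof, so there is no in-text argument to compare against. Your proof is correct and is the standard derivation from the circuit description of $M \setminus X$: part (1) follows by double inclusion using the characterization of independent sets as circuit-free sets, part (2) uses (1) together with extension to a base of $M$ and intersection with $E-X$, and part (3) is an immediate consequence of (1) applied to subsets of $Y$.

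One small remark on your closing sentence: it is not that the extension-to-a-base step ``never leaves $E-X$'' --- the extended base $B'$ may well contain elements of $X$ --- but rather that, since $Y \subseteq E-X$ already, the containment $Y \subseteq B'$ automatically gives $Y \subseteq B' \cap (E-X) = B' - X$, which is all that is needed. The proof itself uses exactly this, so the argument is sound; only that summary phrase is slightly misleading.
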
	
Note that $r_M$ denotes the rank function of a matroid $M$. When we discuss a matroid $M$ and its deletion and contraction, $r$ is understood as the rank function of $M$.
	
For a matroid $M(E,\cI)$ and a subset $X \subseteq E$, the {\it contraction} of $X$ in $M$ is the matroid $M / X$ on $E - X$, which is defined as:
\[M/X = (M^* \setminus X)^*.\]
We have the following properties for the contraction operator:
	
\begin{proposition}[\cite{mat}]\label{pro:contraction}
Given a matroid $M(E,\cI)$ and a subset $X \subseteq E$, then:
\begin{enumerate}
			\item $\cC(M / X) = \text{minimal sets in }\lbr C - X : C \in \cC(M), C - X \neq \emptyset \rbr.$
			\item $\cI(M / X) = \lbr Y \subseteq E - X :\exists B \in \cB(M \setminus(E - X)) \text{ such that } Y \cup B \in \cI(M)\rbr.$
			\item $\cB(M / X) = \lbr Y \subseteq E - X : \exists B \in \cB(M \setminus(E - X))\text{ such that }Y \cup B \in \cB(M) \rbr.$
			\item $r_{M / X}(Y) = r_{M}(X \cup Y) - r_{M}(X)$ for all $Y \subseteq E - X$.
\end{enumerate}
\end{proposition}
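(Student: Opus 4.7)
The plan is to bootstrap from the definition $M/X = (M^*\setminus X)^*$ up to each of the four descriptions, proving them in the order (4) $\Rightarrow$ (2) $\Rightarrow$ (3) $\Rightarrow$ (1), so that each structural claim can be tested against the rank formula established first.

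For (4), I would apply the corank--rank identity (\ref{eq:corank-rank}) twice. Writing $F := E - X$ for the ground set of $M^*\setminus X$ and using Proposition \ref{pro:deletion}(3) to identify $r_{M^*\setminus X}$ with $r_{M^*}$ on subsets of $F$, the identity (\ref{eq:corank-rank}) applied to $N = M^*\setminus X$ gives
\[
r_{M/X}(Y) \;=\; |Y| - r_{M^*}(F) + r_{M^*}(F - Y),
\]
and a second application of (\ref{eq:corank-rank}) (now to $M$ itself) collapses the right-hand side, after cancellation of $|E|$, $|X|$, $|Y|$, and $r(E)$, to $r_M(X \cup Y) - r_M(X)$. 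Once (4) is in hand, (2) and (3) reduce to cardinality accounting via Proposition \ref{pro:basic}(7)--(8): if $B$ is a base of $M|X$ then $|B| = r_M(X)$, so for disjoint $Y \subseteq E - X$ we have $|Y \cup B| = |Y| + r_M(X)$, and this equals $r_M(X \cup Y)$ precisely when $|Y| = r_{M/X}(Y)$, i.e.\ when $Y$ is independent in $M/X$. For the forward direction of (2), I would extend a given $Y \in \cI(M/X)$ to a maximum independent subset of $X \cup Y$ in $M$ and take $B$ to be its intersection with $X$; the cardinality identity forces $B$ to be a base of $M|X$. Imposing the further equation $|Y \cup B| = r(E)$ upgrades the statement to bases, yielding (3).

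The main obstacle is (1). Given $C \in \cC(M/X)$, I would extend $C$ by a base $B$ of $M|X$; by (2), $C \cup B$ is dependent in $M$ while $(C - \{c\}) \cup B$ is independent for every $c \in C$. Therefore there is a circuit $C_0 \subseteq C \cup B$ of $M$ with $C_0 \cap (E-X) \neq \emptyset$, and $C_0 - X \subseteq C$ is automatic. The delicate step is showing $C_0 - X = C$ and, conversely, that any minimal element of $\{C' - X : C' \in \cC(M),\, C' - X \neq \emptyset\}$ is a circuit of $M/X$. I plan to unify both directions by proving the intermediate equivalence that, for any $D \subseteq E - X$, dependence of $D$ in $M/X$ (read off from (4) and Proposition \ref{pro:basic}(7)) is equivalent to the existence of $C' \in \cC(M)$ with $\emptyset \neq C' - X \subseteq D$; containment-minimality then translates directly on both sides. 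The genuine difficulty is ensuring that distinct circuits of $M$ cannot collapse to the same element of $\cC(M/X)$, which is precisely why the right-hand side of (1) retains only the minimal differences $C - X$; handling this without a case split on how $C_0$ meets $X$ will require care.
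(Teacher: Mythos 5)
The paper does not prove this proposition; it is cited from the matroid theory reference \cite{mat} and used as background, so there is no in-text argument to compare yours against. On its own terms your plan is a correct and essentially standard route: establish the contraction rank formula (4) by two applications of the corank identity (\ref{eq:corank-rank}) to $M/X=(M^{*}\setminus X)^{*}$ together with Proposition \ref{pro:deletion}(3), read (2) and (3) off (4) by cardinality accounting, and characterize (1) through the intermediate equivalence between dependence in $M/X$ and containment of some nonempty trace $C-X$ with $C\in\cC(M)$. I checked the rank computation and the cardinality arguments and they go through.

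Two small but genuine points to patch. First, in the forward direction of (2) you extend a given $Y\in\cI(M/X)$ to a maximal $M$-independent subset of $X\cup Y$; this presupposes that $Y$ itself is $M$-independent. That is true but needs a line: by submodularity, $|Y| = r(X\cup Y)-r(X) \le r(Y)-r(X\cap Y) = r(Y) \le |Y|$, forcing $r(Y)=|Y|$. Second, the ``genuine difficulty'' you flag for (1) --- that distinct circuits of $M$ might collapse to the same trace $C-X$ --- is not actually the obstruction, and the minimality restriction in the statement is not there to prevent it; duplication is harmless, since minimality is by set containment, not by counting preimages. What the filter really excludes are non-minimal traces that properly contain a smaller trace. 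Your intermediate equivalence (a set $D\subseteq E-X$ is dependent in $M/X$ iff $D\supseteq C'-X\ne\emptyset$ for some $C'\in\cC(M)$) already handles this cleanly: the $M/X$-dependent subsets of $E-X$ are exactly those containing some nonempty trace, so their minimal elements are exactly the minimal traces, with no case split on how circuits of $M$ meet $X$. So the concern is misdirected, but the plan you lay out resolves it regardless. Also note that your opening sketch for (1) asserts that a single base $B$ of $M|X$ makes $(C-\{c\})\cup B$ independent for every $c\in C$; the ``exists'' form of (2) only gives one $B$ per $c$. This is fixable (any base of $M|X$ works, via $r(B\cup D)=r(X\cup D)$), but since you abandon that sketch for the rank-based equivalence, it does not affect the final argument.
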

Next, let us recall the definition of polymatroids. From now on, for any vector $x \in \R^E$ and for any subset $A \subset E$, we let $x(A) := \sum\limits_{e\in A} x(e)$. Also, if $y\in \R^E$ is another vector, then $y\le x$ iff $y(e)\le x(e)$ for all $e\in E$.
\begin{definition}\label{def:polymatroid}
A {\it polymatroid} in $\R^E$ is a compact non-empty subset $P$ of $\R^E_{\geq 0}$ satisfying the following properties:
\begin{itemize}
		\item[(i)] If $y \leq x \in P$, then $y \in P.$
		\item[(ii)] Given $x \in \R^E_{\geq 0}$, all maximal vectors $y \in P$ with $y \leq x$ (which are called $P$-{\it basis} of $x$) must have the same component sum $y(E)$.
\end{itemize}
\end{definition}
Let $M=(E,\cI)$ be a matroid with the rank function $r$. The {\it associated polymatroid} $P(E,r)$ of $M$ is defined by the following polytope: all $x\in\R^E$ such that
\begin{equation}\label{poly1}
		\begin{array}{ll}
			x(A) \leq r(A), \qquad \forall A \subseteq E; \\
			x \geq 0.
		\end{array}
\end{equation}
Given a matroid $M=(E,\cI)$, Edmonds \cite[Theorem 39]{edmondsgeedy} shows that the set of vertices of the associated polymatroid $P(E,r)$ described in (\ref{poly1}) is precisely the set of indicator vectors of all the independent sets in $\cI$. Furthermore, the set of all maximal vectors (with respect to the partial order $y\le x$) in $P(E,r)$ is the convex hull of all the indicator vectors for bases in $\cB$ \cite{edmondsgeedy}.
	
For simplicity, throughout this paper, we only consider loopless matroids with positive rank.
Loopless means that $r(X)=0$ implies $X=\emptyset$ and positive rank means that $r(E)>0$.
	
\subsection{Principal partition of matroids}\label{sec:pp}
In this section, we recall some results from the theory of principal partitions as presented in \cite{fujishige2009theory}. Consider a matroid $M(E,\cI)$ with the rank function $r$. We start with the following min-max relation, which relates a packing problem with an attack problem. For any positive integers $k$ and $l$, we have:
\begin{align*}
		&\max \left\{ \sum\limits_{i =1}^{k} |I_i| : \{I_i\}_{i=1}^k \subset \cI, 
		\sum_{i=1}^k \ones_{\{e\in I_i\}}\le l, \quad\forall e\in E\right\}= \min \left\{ kr(X)  + l|E-X| : X \subseteq E \right\}.
\end{align*}
For a nonnegative rational number $\lambda$, let $\cD_\lambda$ denote the set of minimizers of the submodular function $f_{\lambda}(X) = r(X) +  \la |E-X|: 2^E\rightarrow\R$. Then, $\cD_\la$ is  a distributive lattice (closed under unions and intersections). A value $\lambda$ is called {\it critical}, if $\cD_\lambda$ contains more than one element. It has been shown that the set of all critical values is finite, hence can be denoted by
\begin{equation}\label{eq:crivalue}
		0 \leq \lambda_1 < \dots < \lambda_q.
\end{equation}
	For each $i = 1,\dots, q$, define $E^{-}_{\lambda_i}$ and $E^{+}_{\lambda_i}$ as the minimum and maximum elements of $\cD_{\lambda_i}$, respectively, then $E^{-}_{\lambda_i} \subsetneq E^{+}_{\lambda_i}$. Furthermore, we have the following:
\begin{equation}
		\emptyset = E^{-}_{\lambda_1} \subsetneq E^{+}_{\lambda_1} = E^{-}_{\lambda_2} \subsetneq E^{+}_{\lambda_2} = \dots = E^{-}_{\lambda_q} \subsetneq E^{+}_{\lambda_q} = E.
\end{equation}
Moreover, each minor $(M| E^{+}_{\lambda_i}) / E^{-}_{\lambda_i}$ on $E^{+}_{\lambda_i} - E^{-}_{\lambda_i}$, with critical value $\lambda_i = k/l$,  has $k$ bases that uniformly cover every element of $E^{+}_{\lambda_i} - E^{-}_{\lambda_i}$ exactly $l$ times. In other words, as explained in the paragraph above (\ref{eq:homogeneous-matroid}), this minor is homogeneous.
	
Subsequently, the theory of principal partitions of matroids was extended to polymatroids. Let $P(E,r)$ be the polymatroid associated with the matroid $M=(E,\cI)$. Edmonds demonstrated that for any positive real $\lambda$,
\begin{equation}\label{eq:edmons}
		\max \left\{ x(E) : x \in P(E,r), x \leq \lambda \one \right\} = \min \left\{ r(X) + \lambda |E - X| : X \subseteq E \right\},
\end{equation}
where $\one$ is the column vector of all ones and the inequality  holds coordinatewise. 
	
Focusing on the left-hand side of equality (\ref{eq:edmons}), it was shown in \cite{fujishige1980lexicographically} that there exists a unique base $b^*$, called the {\it universal base} of $P(E,r)$, such that $b^* \wedge \lambda\one$ is a maximizer for each $\lambda$, where $(b^* \wedge \lambda\one)(e) := \min \{ b^*(e),\lambda \}$, for all $e \in E$. Furthermore, all critical values $\lambda_i$ as in (\ref{eq:crivalue}) are the distinct elements of $b^*$, and 
\[E^{+}_{\lambda_i} = \{ e \in E : b^*(e) \leq \lambda_i \} \quad \text{for } i = 1,\dots,q.\]
	
There are two characterizations for the universal base $ b^* $. The first one is that the universal base $ b^* $ is an optimal solution of the problem 
\begin{equation}
		\min \left\{ \sum\limits_{e \in E} x(e)^2 : x \text{ is a base in } P(E,r) \right\}.
\end{equation}
This implies that when considering the graphic polymatroid of a graph, the universal base $ b^* $ is equal to the unique optimal density of the problem (\ref{eq:co}). The second characterization is that the universal base $ b^* $ is identical to the {\it lexicographically optimal base} of $ P(E,r) $, see \cite{fujishige1980lexicographically} for further details.
	
\subsection{Packing and covering problems}
Next, we recall the base packing problem for the base family $\cB$ of a matroid $M =(E,\cI)$:
	
\begin{equation}\label{eq:packing-tree}
		{\renewcommand{\arraystretch}{1.8}
			\begin{array}{ll}
				\underset{\lambda \in \R^{\cB}_{\geq 0}}{\rm maximize} & \sum\limits_{B \in \cB} \lambda(B) \\
				\text{subject to} & \sum\limits_{B \in \cB : e \in B}\lambda(B) \leq 1, \qquad \forall e \in E.
			\end{array}
		}
\end{equation}
The covering problem for the base family $\cB$ is formulated as follows:

\begin{equation}\label{eq:base-covering-tree}
		{\renewcommand{\arraystretch}{1.8}
			\begin{array}{ll}
				\underset{\kappa \in \R^{\cB}_{\geq 0}}{\rm minimize} & \sum\limits_{B \in \cB} \kappa(B) \\
				\text{subject to} & \sum\limits_{B \in \cB : e \in B}\kappa(B) \geq 1, \qquad \forall e \in E.
			\end{array}
		}
\end{equation}
	
Using two of Edmonds's results from \cite{edmonds1965lehman}, we are able to derive the following theorem.
\begin{theorem}\label{nash}
		Let $S(M), D(M), \tau(M), \upsilon(M)$ be the strength, the fractional arboricity, the optimal values of the base packing problem, and the base covering problem of a matroid $M$, respectively. Then, we have:
		\begin{align}
			\tau(M) =  S(M), \\
			\upsilon(M) = D(M).
		\end{align}
\end{theorem}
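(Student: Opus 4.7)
The plan is to prove both equalities via LP duality, exhibiting an explicit feasible vector for one direction and using a layer-cake decomposition combined with Edmonds's greedy characterization of min/max weight bases for the other. Taking LP duals recasts (\ref{eq:packing-tree}) and (\ref{eq:base-covering-tree}) as
\[
\tau(M) = \min\{ y(E) : y \in \R^E_{\geq 0},\ y(B) \geq 1 \ \text{for all}\ B \in \cB\},
\]
\[
\upsilon(M) = \max\{ z(E) : z \in \R^E_{\geq 0},\ z(B) \leq 1 \ \text{for all}\ B \in \cB\}.
\]

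For the bound $\tau(M) \leq S(M)$, let $X^*$ attain the minimum in (\ref{eq:strength-problem}) and set $y := (r(E)-r(E-X^*))^{-1}\one_{X^*}$. For any base $B$, the set $B \cap (E-X^*)$ is independent in $M\setminus X^*$, so $|B \cap X^*| = |B| - |B \cap (E-X^*)| \geq r(E) - r(E-X^*)$, giving $y(B)\geq 1$ and $y(E) = S(M)$. For the reverse inequality, given a feasible $y$, write $y(e) = \int_0^\infty \one_{\{y>t\}}(e)\,dt$. Sorting the elements in order of increasing $y$ and applying the greedy algorithm produces a minimum-weight base $B^*$ with $|B^* \cap \{y\leq t\}| = r(\{y \leq t\})$ for every $t$, hence
\[
y(B^*) = \int_0^\infty (r(E) - r(E - \{y>t\}))\,dt.
\]
Pointwise in $t$, the definition of $S(M)$ gives $|\{y>t\}| \geq S(M)(r(E) - r(E - \{y>t\}))$ (trivially when the right side is zero); integrating yields $y(E) \geq S(M)\,y(B^*) \geq S(M)$.

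The argument for $\upsilon(M) = D(M)$ is parallel. For $\upsilon(M) \geq D(M)$, let $X^*$ maximize $|X|/r(X)$ and set $z := r(X^*)^{-1} \one_{X^*}$; since $B \cap X^* \in \cI$ has size at most $r(X^*)$, one gets $z(B) \leq 1$ and $z(E) = D(M)$. For $\upsilon(M) \leq D(M)$, write $z(e) = \int_0^\infty \one_{\{z>t\}}(e)\,dt$ and invoke the greedy algorithm applied in order of decreasing $z$ to obtain a maximum-weight base $B^*$ with $|B^* \cap \{z>t\}| = r(\{z>t\})$, whence $z(B^*) = \int_0^\infty r(\{z>t\})\,dt$. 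The feasibility $z(B^*) \leq 1$ combined with the pointwise estimate $|\{z>t\}| \leq D(M)\,r(\{z>t\})$ integrated over $t$ yields $z(E) \leq D(M)$.

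The main obstacle is justifying the layer-cake/greedy identities expressing the min- and max-weight base value as integrals of (rank deficiencies of) level sets; these identities are precisely the content of the two theorems of Edmonds referenced in the introduction and follow from the classical greedy optimality on the matroid base polytope. Once they are in hand, the remainder of the proof reduces to applying the definitions of $S(M)$ and $D(M)$ at each level $t$ and integrating, taking the natural convention that both sides vanish whenever the relevant rank difference is zero.
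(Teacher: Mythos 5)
Your proof is correct, but it takes a genuinely different route from the paper's. You establish both equalities directly via LP duality, writing each dual feasible vector as a layer-cake integral over its level sets and pairing that with the greedy-algorithm identity that a base built in increasing (resp.\ decreasing) weight order meets every sublevel (resp.\ superlevel) set in exactly its rank; the pointwise strength/arboricity bound then integrates to give the conclusion in a line. This is essentially the classical, self-contained argument and is substantially shorter. The paper, by contrast, deliberately routes the result through base modulus: it first proves the chain $1/\eta^*_{\max} = S(M) \le \tau(M) \le \theta(M) \le \upsilon(M) \le D(M) = 1/\eta^*_{\min}$ involving the optimal density $\eta^*$ for $\Mod_2(\widetilde{\cB})$ (Lemma \ref{lem:5inequalities}), then collapses it using the deflation/serial-rule machinery (Theorems \ref{thm:max}, \ref{thm:min}, \ref{thm:serialmodmod}) together with monotonicity of $\tau$ under contraction and of $\upsilon$ under deletion (Lemma \ref{lem:5sd}). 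The trade-off: your argument buys brevity and independence from the modulus apparatus; the paper's buys an illustration that the modulus framework reproduces Edmonds's theorems, which is the paper's explicit goal. One small inaccuracy worth flagging: the layer-cake identity for the min/max weight base is a direct consequence of the greedy algorithm's prefix-rank property, \emph{not} of ``the two theorems of Edmonds referenced in the introduction'' --- those are Edmonds's base packing and covering theorems, i.e.\ the very statement under proof, so that attribution would be circular. As actually written, though, you correctly invoke only greedy optimality, so the proof stands.
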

	
\subsection{Modulus}
Let $E$ be a finite set with given weights $\si \in \R^E_{>0}$ assigned to each element $e$ in $E$. We say that $\Ga$ is a family of objects in $E$, if each object $\ga \in \Ga$ is associated to a function  $\cN(\ga,\cdot)^T: E\rightarrow \R_{\geq 0}$, which we think of as a  {\it usage vector} in $\R^E_{\geq 0}$. In other words, $\Ga$ is associated with a $|\Ga| \times |E|$ {\it usage matrix} $\cN$. From now on, we will assume that $\Ga$ is non-empty and each object $\ga\in\Ga$ uses at least one element in $E$ with a positive and finite amount.
	
A {\it density}  $\rho\in \R^E_{\geq 0}$ is a vector where $\rho(e)$ represents the {\it cost} of using the element $e \in E$. For each object $\ga \in \Ga$, we define the {\it total usage cost} of $\ga$ with respect to $\rho$
\begin{equation}\label{eq:total-usage}
\ell_{\rho}(\ga) := \sum\limits_{e \in E} \cN(\ga,e)\rho(e)= (\cN\rho)(\ga).
\end{equation}
A density $\rho \in \R^E_{\geq 0 }$ is  called {\it admissible} for $\Ga$, if for all $\ga\in \Ga$,
$ \ell_{\rho}(\ga) \geq 1 .$
In matrix notations, $\rho$ is admissible if $\cN\rho \geq \one.$
The {\it admissible set} $\Adm(\Ga)$ of $\Ga$ is defined as the set of all admissible densities for $\Ga$,
\begin{equation}\label{eq:adm-set}
		\Adm(\Ga) := \left\{ \rho \in \R^E_{\geq 0 }: \cN\rho \geq \one \right\}. 
\end{equation} 
Fix $1 \leq p < \infty$, the {\it energy} of the density  $\rho$ is defined as follows
\[ \cE_{p,\si}(\rho):=\sum\limits_{e \in E}\si(e)\rho(e)^p.\] 
When $p =\infty $,
\[\cE_{\infty,\si}(\rho):= \max\limits_{e\in E} \left\{ \si (e)\rho(e)\right\}.\]
\begin{definition}
	The {\it $p$-modulus} of $\Ga$ is
	\[\Mod_{p,\si}(\Ga):= \inf\limits_{\rho \in \Adm(\Ga)} \cE_{p,\si}(\rho).\]
\end{definition}
Equivalently, the $p$-modulus of $\Ga$ is the following optimization problem,
\begin{equation} \label{modp}
		\begin{array}{ll}
			\underset{\rho \in \R^{E}_{\geq 0}}{\text{minimize}}    &\cE_{p,\si}(\rho) \\
			\text{subject to } &\sum\limits_{e \in E} \cN(\ga,e)\rho(e) \geq 1, \quad \forall \ga \in \Ga.
			
		\end{array}
\end{equation}
When $\si$ is the vector of all ones, we omit $\si$ and write $\cE_{p}(\rho) := \cE_{p,\si}(\rho)$ and $\Mod_{p}(\Ga) :=\Mod_{p,\si}(\Ga)$.
	
\subsection{Fulkerson dual families and the $\MEO$ problem}
Let $E$ be a finite set. Let $\Ga$ be a family of objects on $E$. Let $K$ be a closed convex set in $\R^E_{\geq 0}$ such that $ \varnothing \subsetneq K \subsetneq \R^E_{\geq 0}$ and $K$ is {\it recessive}, meaning that $K +\R^E_{\geq 0} = K$. The {\it blocker} $\BL(K)$ of $K$ is defined as, \[\BL(K) = \lbr \eta \in \R^E_{\geq 0} : \eta^{T}\rho \geq 1, \forall \rho \in K \rbr. \] 
We will routinely identify $\Ga$ with the set of its usage vectors $\left\{ \cN(\ga,\cdot )^T: \ga \in \Ga \right\}$ in $\R^E_{\geq 0}$, hence we can write $\Ga \subset \R^E_{\geq 0 }$. As done above in (\ref{eq:dominant-bases}), we defined the {\it dominant} of $\Ga$ as
$ \Dom(\Ga):= \co(\Ga) + \R^E_{\geq 0},$
where $\co(\Ga)$ denotes the convex hull of $\Ga$ in $\R^E$.
	
Note that the admissible set  $\Adm(\Ga)$ defined in (\ref{eq:adm-set}) is closed, convex in $ \R^E_{\geq 0}$ and recessive. 
Next, we recall Fulkerson duality for modulus.
\begin{definition} Let $\Ga$ be a family of objects on $E$.
		The {\it Fulkerson blocker family} $\widehat{\Ga}$ of $\Ga$ is defined as the set of all the extreme points of $\Adm(\Ga)$.
		\[ \widehat{\Ga} := \Ext\left(\Adm(\Ga)\right) \subset  \R^E_{\geq 0}.\]
\end{definition}
Fulkerson blocker duality \cite{fulkersonblocking} states that
\begin{equation}\label{eq:dom-adm-block-hat}
\Dom(\widehat{\Ga})= \Adm(\Ga) = \BL(\Adm(\Gahat)),
\end{equation}
\begin{equation}\label{eq:dom-adm-block}
\Dom(\Ga)= \Adm(\widehat{\Ga}) = \BL\left(\Adm(\Ga)\right).
\end{equation}
Moreover,  $\widehat{\Ga}$  has its own Fulkerson blocker family, and
\begin{equation}\label{eq:gahathat-ga}
		\widehat{\widehat{\Ga}}  \subset \Ga .
\end{equation}
\begin{definition}\label{def:fulkerson-dual}
		Let $\Ga$ and $\widetilde{\Ga}$ be two sets of vectors in $\R^E_{\geq 0}$. We say that $\Ga$ and $\widetilde{\Ga}$ are a {\it Fulkerson dual pair} (or $\Gatil$ is a {\it Fulkerson dual family} of $\Ga$) if \[\Adm(\Gatil) = \BL(\Adm(\Ga)).\] 
\end{definition}
\begin{remark}
		If $\Gatil$ is a Fulkerson dual family of $\Ga$, then $\Ga$ is also a Fulkerson dual family of $\Gatil$ because \[\BL(\Adm(\Gatil)) = \BL(\BL(\Adm(\Ga))) = \Adm(\Ga).\]
\end{remark}
\begin{proposition}[\cite{huyfulkerson}]\label{theo:smallestF}
		Let $\Ga$ be a set of vectors in $\R^E_{\geq 0}$. Let $\Gahat$ be the Fulkerson blocker family of $\Ga$ and $\Gatil$ be a Fulkerson dual family of $\Ga$. Then, $\Gahat$ is the smallest Fulkerson dual family of $\Ga$, meaning that \[\Gahat \subset \Gatil.\] 
\end{proposition}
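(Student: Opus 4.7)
The plan is to exploit the symmetry of the Fulkerson dual pair relation to obtain a concrete description of $\Adm(\Ga)$ in terms of $\Gatil$. By the Remark preceding the proposition, since $\Gatil$ is a Fulkerson dual family of $\Ga$, also $\Ga$ is a Fulkerson dual family of $\Gatil$, and hence $\Adm(\Ga) = \BL(\Adm(\Gatil))$. Combining this with identity (\ref{eq:dom-adm-block}) applied to the family $\Gatil$, namely $\Dom(\Gatil) = \BL(\Adm(\Gatil))$, gives the key identification
\[
\Adm(\Ga) \;=\; \Dom(\Gatil) \;=\; \co(\Gatil) + \R^E_{\geq 0}.
\]
With this in hand, the proposition reduces to the geometric statement that every extreme point of $\co(\Gatil) + \R^E_{\geq 0}$ already lies in $\Gatil$.

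To establish that, I would proceed in two short steps. Fix $\eta \in \Gahat = \Ext(\Adm(\Ga))$ and decompose $\eta = v + w$ with $v \in \co(\Gatil)$ and $w \in \R^E_{\geq 0}$. First, I argue that $w = 0$: if $w(e) > 0$ for some $e \in E$, then setting $\de := w(e)/2$ and perturbing $\eta$ by $\pm\de$ in the $e$-th coordinate produces two distinct points of $\co(\Gatil) + \R^E_{\geq 0} = \Adm(\Ga)$ whose midpoint is $\eta$, contradicting extremality. Hence $\eta = v \in \co(\Gatil)$, so we can write $\eta = \sum_{i=1}^{k}\la_i v_i$ as a finite convex combination of elements $v_i \in \Gatil$ with strictly positive weights $\la_i$ summing to $1$. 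If $k \geq 2$, isolating one term exhibits $\eta$ as a non-trivial convex combination of two distinct points of $\co(\Gatil) \subseteq \Adm(\Ga)$, again contradicting extremality. Therefore $k = 1$ and $\eta = v_1 \in \Gatil$, which proves $\Gahat \subseteq \Gatil$.

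I do not anticipate a serious obstacle: once the duality identity $\Adm(\Ga) = \Dom(\Gatil)$ has been extracted, what remains is a standard extreme-point argument for a Minkowski sum of a convex hull with the nonnegative orthant. The only mildly delicate point is distinguishing membership in $\co(\Gatil)$ from membership in $\Gatil$ itself; this is handled cleanly by the finite convex combination argument combined with the extremality hypothesis as described above.
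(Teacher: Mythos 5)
The paper itself does not prove Proposition \ref{theo:smallestF}; it is cited from \cite{huyfulkerson}. So there is no internal proof to compare against, and your proposal must be judged on its own.

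Your argument is correct, and it is the natural one. The identification $\Adm(\Ga) = \BL(\Adm(\Gatil)) = \Dom(\Gatil)$, obtained by combining the Remark with (\ref{eq:dom-adm-block}) applied to $\Gatil$, is exactly the right reduction: it turns the claim into the purely geometric statement that $\Ext\bigl(\co(\Gatil) + \R^E_{\geq 0}\bigr) \subseteq \Gatil$. Your two-step extreme-point argument — first killing the orthant component $w$ by a $\pm\de$ perturbation in a single coordinate, then reducing the finite convex combination to a single term — handles this cleanly.

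One small point worth tightening: when $k \geq 2$, isolating $v_1$ gives $\eta = \la_1 v_1 + (1-\la_1)u$ with $u = (1-\la_1)^{-1}\sum_{i\geq 2}\la_i v_i$, but nothing a priori forces $v_1 \neq u$, so the phrase ``two \emph{distinct} points'' is not automatic. The gap is harmless: if $v_1 = u$, then $\eta = v_1 \in \Gatil$ directly, so the conclusion $\eta\in\Gatil$ holds in either branch. Alternatively, start from a representation of $\eta$ with the minimum number of terms, in which case $v_1 = u$ would contradict minimality. Either patch is one line, and with it the proof is complete.
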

\begin{remark}
		When all usage vectors of $\Ga$ belong to $\left\{ 0,1 \right\} ^E$, if the support set $\lbrace e \in E: \cN(\ga,e) \neq 0 \rbrace$ of any  usage vector of $\Ga$ does not contain the support set of any other usage vector of $\Ga$, then $\Ga$ is called a {\it clutter} in the combinatorics literature. 
		An important property of a clutter $\Ga$ is $\widehat{\widehat{\Ga}} =\Ga $, see \cite{fulkersonanti}.  In this case, $\Ga$ is the Fulkerson blocker family of $\Gahat$. 
\end{remark}
When  $1<p < \infty$, let $q:=p/(p-1)$ be the Hölder conjugate exponent of $p$. For any set of weights $\si \in \R^E_{>0}$,  define the dual set of weights $\widetilde{\si}$ as $\widetilde{\si}(e):=\si(e)^{-\frac{q}{p}}$ for all $e\in E$. Let $\Gatil$ be a Fulkerson dual family of $\Ga$. Fulkerson duality for modulus  \cite[Theorem 3.7]{pietroblocking} states that
\begin{equation}
	\Mod_{p,\si}(\Ga)^{\frac{1}{p}}\Mod_{q,\widetilde{\si}}(\widetilde{\Ga})^{\frac{1}{q}}=1.
\end{equation}
Moreover, the optimal $\rho^*$ of $\Mod_{p,\si}(\Ga) $ and the optimal $\eta^*$ of $\Mod_{q,\widetilde{\si}}(\widetilde{\Ga})$ always exist, are unique, and are related as follows,
\begin{equation}\label{eq:weighted-eta-rho}
	\eta^{\ast}(e) = \frac{\si(e)\rho^{\ast}(e)^{p-1}}{\Mod_{p,\si}(\Ga)}, \quad \forall e\in E.
\end{equation}
When $p=2$, we have
\begin{equation}\label{eq:mod2}
		\Mod_{2,\si}(\Ga)\Mod_{2,\si^{-1}}(\widetilde{\Ga})=1 \qquad\text{and}\qquad   \displaystyle \eta^{\ast}(e) = \frac{\si(e)}{\Mod_{2,\si}(\Ga)}\rho^{\ast}(e) \quad \forall  e\in E.
\end{equation}
Let $\cP(\Ga)$ be the set of all probability mass functions (pmf) on $\Ga$. According to the probabilistic interpretation of modulus \cite{pietrominimal}, we can express

\begin{equation}\label{eq:modmeo}
	\Mod_2(\Ga)^{-1} = \min\limits_{\mu \in \cP(\Ga)} \mu^T\cN\cN^T\mu.
\end{equation}
Consider the scenario where $\Ga$ is a collection of subsets of $E$ with usage vector given by the indicator function. Given a pmf $\mu \in \cP(\Ga)$, let $\underline{\ga}$ and $\underline{\ga}'$ be two independent random objects in $\Ga$, identically distributed with law $\mu$. 
The cardinality of the overlap between $\underline{\ga}$ and $\underline{\ga}'$, is $|\underline{\ga} \cap \underline{\ga}'|$ and is a random variable whose expectation is denoted by  $\bE_\mu|\underline{\ga}\cap\underline{\ga}'|$, which equals $\mu^T\cN\cN^T\mu$. Then, the {\it minimum expected overlap} ($\MEO$) problem for $\Ga$ is formulated as $  \min\limits_{\mu \in \cP(\Ga)} \bE_\mu|\underline{\ga}\cap\underline{\ga}'|$.
Moreover, any pmf $\mu \in  \cP(\Ga)$ is optimal if and only if
\begin{equation}\label{eq:eta-rho}
	(\cN\mu)(e) = \rho^*(e)/\Mod_2(\Ga) \quad \forall e \in E.
\end{equation}

Next, we want to recall the serial rule for the $\MEO$ problem. 
Given $A \subset E$, let $\psi_A$ be the restriction operator, 
\begin{align*}
		\psi_A: 2^E  & \rightarrow 2^A\\
		\ga \subseteq E & \mapsto \ga \cap A.
\end{align*}
Then, for each  $A \subset E$, $\psi_A$ induces a family of objects $ \psi_{A}(\Ga) = \lbrace \ga\cap A: \ga \in \Ga \rbrace.$
	
\begin{definition}\label{def:divides-gamma}
Let $\left\{ E_1,E_2 \right\} $ be a partition of the edge set $E$. For each $i = 1, 2$, we define an induced family of objects $\Ga_i :=  \psi_{E_i}(\Ga)$.
We say that a partition $\left\{ E_1,E_2 \right\} $ of the edge set $E$ {\it divides $\Ga$}, if 
$\Ga$ coincides with the {\it concatenation}
\begin{equation}\label{eq:concatenation}
\Ga_1 \oplus \Ga_2:= \left\{ \ga_1 \cup \ga_2 : \ga_i \in  \Ga_i, i =1,2 \right\}.
\end{equation}
\end{definition}
Given a partition $\left\{ E_1,E_2\right\}$ that divides $\Ga$ and a pmf $\mu \in \cP(\Ga)$. For each $i = 1, 2$, define the {\it marginal} $\mu_i \in \cP(\Ga_i)$ as follows,
\[\mu_{i}(\zeta) :=  \sum \left\{   \mu(\zeta) : \ga \in \Ga, \psi_{E_i}(\ga)= \zeta \right\} \quad \forall \zeta \in \Ga_i.\]
On the other hand, given measures $\nu_i \in \cP ( \Ga_i)$ for $i =1,2$, define their {\it product measure} in $\cP(\Ga)$ as follows, \[ \left( \nu_1  \oplus \nu_2  \right) (\ga)  := \nu_1(\zeta_1) \nu_2(\zeta_2), \]
for all $ \displaystyle \ga = \zeta_1 \cup \zeta_2$ where $ \zeta_i \in \Ga_i$, $i =1,2.$ 
	
In this case, the modulus and $\MEO$ problems split into two smaller subproblems.
\begin{theorem}[\cite{pietrofairest}]\label{thm:serialmod}
Let $\Ga$ be a family of subsets of the ground set $E$. Let $ E = E_1 \cup E_2$ be a partition that divides $\Ga$. Let $\Ga_1$ and $\Ga_2$ be the family induced by the restriction operators $\psi_{E_1}$ and  $\psi_{E_2}$. Then:
		
\bi
\item[(i)] We have \[ \MEO(\Ga) = \MEO(\Ga_1) +\MEO(\Ga_2);\]
\item[ (ii)] A pmf $\mu \in \cP(\Ga)$  is optimal for $\MEO(\Ga)$ if and only if its marginal pmfs $\mu_i \in \cP(\Ga_i), i=1,2$ are optimal for 
$\MEO(\Ga_i)$ respectively;
\item[ (iii)] Conversely, given  pmfs $\nu_i \in \cP(\Ga_i)$ that are optimal for $\MEO(\Ga_i)$  for $i=1,2$ then $\nu_1 \oplus \nu_2 $ is an optimal pmf in $\cP(\Ga) $ for $\MEO(\Ga)$; 
\item[ (iv)] For any pmf $\mu$ with marginals $\mu_i$, if $ e\in E_i$, $i =1,2$, then
\[ \bP_{\mu}(e \in \underline{\ga}) = \bP_{\mu_i}(e \in \underline{\ga_i}).\]
\ei
\end{theorem}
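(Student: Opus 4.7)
The plan is to reduce all four statements to a single splitting identity: for any joint pmf $\mu \in \cP(\Ga)$ with marginals $\mu_1, \mu_2$,
\[
\bE_\mu|\underline{\ga} \cap \underline{\ga}'| \;=\; \bE_{\mu_1}|\underline{\zeta_1} \cap \underline{\zeta_1}'| + \bE_{\mu_2}|\underline{\zeta_2} \cap \underline{\zeta_2}'|,
\]
where $\underline{\zeta_i}, \underline{\zeta_i}'$ denote i.i.d.\ draws with law $\mu_i$. Once this identity is in hand, parts (i)--(iv) all fall out quickly.

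To establish it, I would observe that since $\{E_1,E_2\}$ divides $\Ga$, each $\ga \in \Ga$ decomposes uniquely as $\ga = \psi_1(\ga) \cup \psi_2(\ga)$ with $\psi_i(\ga) \in \Ga_i$, and $E_1 \cap E_2 = \emptyset$ gives the pointwise splitting
\[
|\underline{\ga} \cap \underline{\ga}'| = |\psi_1(\underline{\ga}) \cap \psi_1(\underline{\ga}')| + |\psi_2(\underline{\ga}) \cap \psi_2(\underline{\ga}')|.
\]
Since $\underline{\ga}, \underline{\ga}'$ are i.i.d.\ under $\mu$, the projections $\psi_i(\underline{\ga}), \psi_i(\underline{\ga}')$ are i.i.d.\ under $\mu_i$ by the very definition of the marginal, and taking expectations yields the identity.

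From this identity, applied to any $\mu$, we get $\bE_\mu|\underline{\ga}\cap\underline{\ga}'| \geq \MEO(\Ga_1)+\MEO(\Ga_2)$, hence $\MEO(\Ga) \geq \MEO(\Ga_1)+\MEO(\Ga_2)$. For the reverse direction I would pick pmfs $\nu_i$ optimal for $\MEO(\Ga_i)$ and consider the product $\nu := \nu_1 \oplus \nu_2$; summing over the fibers of $\psi_i$ shows its marginals are exactly $\nu_1,\nu_2$, so the splitting identity gives $\bE_\nu|\underline{\ga}\cap\underline{\ga}'| = \MEO(\Ga_1)+\MEO(\Ga_2)$. This proves (i) and simultaneously shows $\nu$ is optimal, yielding (iii). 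Part (ii) is then the equality case: if $\mu$ attains $\MEO(\Ga)$, the two nonnegative slacks $\bE_{\mu_i}|\underline{\zeta_i}\cap\underline{\zeta_i}'|-\MEO(\Ga_i)$ must both vanish, so each $\mu_i$ is optimal; conversely, if both marginals are optimal, the splitting gives $\bE_\mu|\underline{\ga}\cap\underline{\ga}'| = \MEO(\Ga_1)+\MEO(\Ga_2) = \MEO(\Ga)$.

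Finally, (iv) is immediate: for $e \in E_i$ the event $\{e \in \underline{\ga}\}$ coincides with $\{e \in \psi_i(\underline{\ga})\}$, and by definition $\psi_i(\underline{\ga})$ has law $\mu_i$. I do not anticipate a real obstacle; the one bookkeeping point that warrants care is the verification that $\nu_1 \oplus \nu_2$ has marginals $\nu_1,\nu_2$, which rests on the fact that $\Ga = \Ga_1 \oplus \Ga_2$ so the fiber $\psi_i^{-1}(\zeta)$ in $\Ga$ is in bijection with $\Ga_{3-i}$ via the complementary projection, collapsing the product measure to $\nu_i(\zeta)$.
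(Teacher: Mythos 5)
Your proof is correct, and the splitting identity you isolate is exactly the right mechanism. The paper itself does not prove this theorem (it only cites \cite{pietrofairest} for it), but your argument is the standard one: the pointwise decomposition $|\underline{\ga}\cap\underline{\ga}'| = |\psi_1(\underline{\ga})\cap\psi_1(\underline{\ga}')| + |\psi_2(\underline{\ga})\cap\psi_2(\underline{\ga}')|$ holds deterministically because $E_1,E_2$ partition $E$, the projections $\psi_i(\underline{\ga}),\psi_i(\underline{\ga}')$ are i.i.d.\ with law $\mu_i$ because measurable functions of independent variables remain independent, and the bijection $\ga\leftrightarrow(\psi_1(\ga),\psi_2(\ga))$ between $\Ga$ and $\Ga_1\times\Ga_2$ (which is precisely the content of $\Ga=\Ga_1\oplus\Ga_2$) is what makes the marginal of $\nu_1\oplus\nu_2$ collapse to $\nu_i$. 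Parts (i)--(iv) then follow exactly as you describe. The only step that would warrant a line or two of explicit computation in a written-up version is the claim that $\nu_1\oplus\nu_2$ has marginals $\nu_1,\nu_2$, but you correctly identify both that it needs checking and why it holds; no gap.
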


\section{Modulus for the base family of a matroid}\label{sec:basemodulus}
\subsection{Base modulus and the MEO problem}
Let $M=(E,\cI)$ be a loopless matroid with $r(M) >0$. Let $ \cB = \cB(M)$ be the family of all bases of  $M$ with usage vectors given by the indicator functions. We call $\cB$ the {\it base family} of $M$. Let $\widetilde{\cB}$ be a Fulkerson dual family of $\cB$ as in Definition \ref{def:fulkerson-dual}.  An explicit construction of $\widetilde{\cB}$ is the Fulkerson blocker family $\widehat{\cB}$ of $\cB$, given in  Theorem \ref{theo:fulkerson}. Let $\mu^*$ be an optimal pmf for $\MEO(\cB)$. Let $\rho^*$ and $\eta^*$ be the unique optimal densities for $\Mod_2(\cB)$ and $\Mod_2(\widetilde{\cB})$.

\begin{lemma}\label{lem:fixedsum}
	Let $\cB$ be the family of bases of a matroid $M$. Let $\mu \in \cP(\cB)$ be a probability mass function (pmf) and let $\eta = \cN^T\mu$ be the corresponding element usage probabilities. Then:
	\begin{equation}\label{eq:fixedsum}
		\eta(E) = r(E).
	\end{equation}
\end{lemma}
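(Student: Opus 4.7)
The plan is to exchange the order of summation and exploit the fact that all bases of a matroid have the same cardinality. By Proposition \ref{pro:basic}, item 8, every $B \in \cB$ satisfies $|B| = r(E)$. Since the usage vectors are indicator functions, we have $\cN(B,e) = \ones_{\{e \in B\}}$, and so $\eta(e) = (\cN^T\mu)(e) = \sum_{B \in \cB} \mu(B)\ones_{\{e \in B\}}$.

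First I would write
\begin{equation*}
\eta(E) = \sum_{e \in E} \eta(e) = \sum_{e \in E} \sum_{B \in \cB} \mu(B)\ones_{\{e \in B\}}.
\end{equation*}
Next I would swap the sums and recognize the inner sum over $e$ as $|B|$:
\begin{equation*}
\eta(E) = \sum_{B \in \cB} \mu(B) \sum_{e \in E} \ones_{\{e \in B\}} = \sum_{B \in \cB} \mu(B)\, |B|.
\end{equation*}
Finally I would invoke $|B| = r(E)$ for every base, pull the constant out of the sum, and use $\sum_{B \in \cB}\mu(B) = 1$ since $\mu$ is a pmf, to conclude $\eta(E) = r(E)\cdot 1 = r(E)$.

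There is essentially no obstacle here; the statement is a direct consequence of the defining property of bases (equicardinality) together with Fubini for finite sums. The only mild subtlety is remembering to invoke the convention fixed earlier in Section \ref{sec:basemodulus} that each base is identified with its indicator vector, so that $\cN(B,\cdot)^T$ is the $0/1$ vector supported on $B$; without this identification the displayed calculation would need to be phrased in terms of the usage matrix entries directly, but the content is identical.
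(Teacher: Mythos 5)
Your proof is correct and is essentially identical to the paper's: both invoke Proposition \ref{pro:basic} part 8 to get $|B|=r(E)$, swap the order of summation, recognize the inner sum as $|B|$, and use $\sum_{B}\mu(B)=1$. No differences worth noting.
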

\begin{proof}
	Note that, by part 8 of Proposition \ref{pro:basic},  we have that $|B| = r(B) = r(E)$ for any base $B \in \cB$. Therefore:
	\[\eta(E) := \sum_{e \in E}\eta(e) = \sum\limits_{e \in E} \sum\limits_{B \in \cB}\mu(B)\cN(B,e) =  \sum\limits_{B \in \cB}\mu(B)\sum\limits_{e \in E}\cN(B,e) = r(E).\]
\end{proof}
By Lemma \ref{lem:fixedsum}, it follows that:
\begin{equation}
	\sum\limits_{e \in E} \eta^*(e) = r(E).
\end{equation}

\begin{theorem}\label{thm:meomod}
	Let $M(E ,\cI)$ be a matroid. Let $\cB$ be the base family of $M$, and let $\widetilde{\cB}$ be a Fulkerson dual family of $\cB$. Furthermore, let  $\rho \in  \R^E_{\geq 0}$, $\eta \in  \R^E_{\geq 0}$, and $\mu \in \cP(\cB)$.
	
	Then $\rho$, $\eta $ and $\mu $ are optimal, respectively, for $\Mod_2(\cB)$, $\Mod_2(\widetilde{\cB})$ and $\MEO(\cB)$ if and only if the following conditions are satisfied:
	\begin{itemize}
		\item[(i)] $\rho \in \Adm(\cB), \hspace{2pt} \eta = \cN^T\mu$;
		\item[(ii)] $\rho$ and $\eta$ are parallel, meaning that for some constant $Z>0$, $\rho(e) = Z\eta(e)$ for all $ e \in E$.
		\item[(iii)] $\mu(B)(1-\ell_\rho(B)) = 0 \quad \forall B \in \cB.$
	\end{itemize}
	In particular, the constant $Z$ in (ii) equals to $\Mod_2(\cB)$ and
	\begin{align*}
		1/Z = \MEO(\cB) = \Mod_2(\widetilde{\cB}).
	\end{align*}
\end{theorem}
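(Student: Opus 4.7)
The plan is to derive this as a matroid-specific refinement of the general Fulkerson--MEO--modulus correspondence summarized by (\ref{eq:modmeo}) and (\ref{eq:mod2}). The key subtlety is that condition (ii) only asserts that $\rho$ and $\eta$ are parallel with \emph{some} positive constant $M$, not the specific constant $\Mod_2(\cB)$; part of the argument is to pin down $M = \Mod_2(\cB)$ a posteriori.

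For the forward direction, suppose $\rho$, $\eta$, and $\mu$ are respectively optimal for $\Mod_2(\cB)$, $\Mod_2(\widetilde{\cB})$, and $\MEO(\cB)$. Then $\cN^T\mu$ lies in $\co(\cB) \subseteq \Dom(\cB) = \Adm(\widetilde{\cB})$ by (\ref{eq:dom-adm-block}) and Definition \ref{def:fulkerson-dual}, and has $\ell^2$-energy equal to $\mu^T\cN\cN^T\mu = \Mod_2(\cB)^{-1} = \Mod_2(\widetilde{\cB})$ by (\ref{eq:modmeo}). By uniqueness of the modulus extremal, $\eta = \cN^T\mu$, which gives the second half of (i); the first half is optimality of $\rho$. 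Condition (ii) with $M = \Mod_2(\cB)$ is the unweighted case of (\ref{eq:mod2}). For (iii) I would compute, using (i), (ii), and Fulkerson duality,
\[\sum_{B\in\cB}\mu(B)\ell_\rho(B) = \sum_{e\in E}\rho(e)\eta(e) = \Mod_2(\cB)\,\cE_2(\eta) = \Mod_2(\cB)\Mod_2(\widetilde{\cB}) = 1.\]
Because $\ell_\rho(B)\ge 1$ by admissibility, $\mu(B)\ge 0$, and $\sum_B \mu(B) = 1$, each slack contribution $\mu(B)(\ell_\rho(B)-1)$ is forced to vanish, which is (iii).

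For the converse I would assume (i), (ii), (iii) hold with some $M>0$ and run the same chain in reverse. Complementary slackness (iii) yields $\sum_B \mu(B)\ell_\rho(B) = \sum_B \mu(B) = 1$; combining this with (i) and (ii) gives the algebraic identity $M\,\cE_2(\eta) = 1$, hence $\cE_2(\eta) = 1/M$ and $\cE_2(\rho) = M^2\cE_2(\eta) = M$. Admissibility of $\rho$ for $\cB$ is part of (i), and $\eta = \cN^T\mu \in \co(\cB) \subseteq \Dom(\cB) = \Adm(\widetilde{\cB})$ as in the forward direction. Therefore $\Mod_2(\cB)\le M$ and $\Mod_2(\widetilde{\cB})\le 1/M$, and the Fulkerson product identity in (\ref{eq:mod2}) forces both inequalities to be equalities. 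In particular $M = \Mod_2(\cB)$, the densities $\rho$ and $\eta$ are the unique modulus extremals, and $\mu^T\cN\cN^T\mu = \cE_2(\eta) = 1/M$ attains the minimum in (\ref{eq:modmeo}), so $\mu$ is optimal for $\MEO(\cB)$.

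The only place where any real care is needed is recognizing that the identity $M\,\cE_2(\eta) = 1$ can be read off from the complementary slackness condition (iii) together with the parallel relation (ii), without a priori knowing the value of $M$; once this is in hand, the rest reduces to a sandwich argument against the Fulkerson product identity. Lemma \ref{lem:fixedsum} is not needed for the proof but clarifies why the constant $M$ is well-defined and finite.
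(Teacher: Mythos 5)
Your proof is correct, and the converse direction takes a genuinely different route from the paper. The paper's converse applies Beurling's criterion (from \cite{pietrominimal}, $p=2$) to the subfamily $\cB^+=\{B : \mu(B)>0\}$ to certify that $\rho$ is the $\Mod_2(\cB)$-extremal, and then invokes Lemma~\ref{lem:fixedsum} (that $\eta(E)=r(E)$ for every pmf-induced $\eta$) to identify the unknown constant $M$ with $\Mod_2(\cB)$. You instead extract from (iii) and (ii) the algebraic identity $M\,\cE_2(\eta)=1$, hence $\cE_2(\rho)=M$ and $\cE_2(\eta)=1/M$; you note that $\rho\in\Adm(\cB)$ by (i) and that $\eta=\cN^T\mu\in\co(\cB)\subseteq\Dom(\cB)=\Adm(\widetilde{\cB})$ by (\ref{eq:dom-adm-block}); and you close the argument by sandwiching $\Mod_2(\cB)\Mod_2(\widetilde{\cB})\le M\cdot(1/M)=1$ against the Fulkerson product identity (\ref{eq:mod2}). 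This sidesteps both Beurling's criterion and Lemma~\ref{lem:fixedsum} and pins down $M$, the optimality of $\rho$ and $\eta$, and the optimality of $\mu$ (via (\ref{eq:modmeo})) in one stroke. Your explicit verification of (iii) in the forward direction --- showing $\sum_B\mu(B)\ell_\rho(B)=\eta^T\rho=\Mod_2(\cB)\cE_2(\eta)=1$ and using $\ell_\rho(B)\ge 1$ --- is also more self-contained than the paper's appeal to ``complementary slackness.'' The trade-off is that the paper's Beurling-criterion route is the one generalized elsewhere in the paper (e.g.\ the serial rule and the $p\neq 2$ case), whereas your sandwich argument is tied to the $p=2$ energy identity; but for this theorem your approach is cleaner and requires fewer external ingredients.
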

\begin{proof}
	Assume that $\rho, \eta,$ and $\mu$ are optimal for $\Mod_2(\cB)$, $\Mod_2(\widetilde{\cB})$ and $\MEO(\cB)$, respectively. Then, equation  (\ref{eq:mod2}) implies that $\rho$ and $\eta$ are parallel with constant $Z = \Mod_2(\cB)$, and this establishes (ii). 
	Next, we define $\eta_{\mu} := \cN^T\mu$. Consequently, $\mu^T\cN\cN^T\mu = \eta_{\mu}^T\eta_{\mu}$ is the minimum expected overlap, and by (\ref{eq:eta-rho}), we have $\rho = \Mod_2(\cB)\eta_{\mu}$. Particularly, by (ii) with $Z$ as above, it follows that $\eta = \eta_{\mu}$. Furthermore, given that $\rho$ is optimal for $\Mod_2(\cB)$, we have $\rho \in \Adm (\cB)$, thus, (i) holds as well. Finally, part (iii) is the complementary slackness condition derived from the probabilistic interpretation for the Lagrange multipliers.
	
	Conversely, assume that $\rho, \eta,$ and $\mu$ satisfy the conditions (i), (ii), and (iii). Define \[\cB^{+}:= \lbr B \in \cB : \mu(B) > 0\rbr,\] and let $\cN^+$ be the usage matrix for $\cB^+$. To demonstrate that  $\rho$ is optimal for $\Mod_2(\cB)$, we apply the Beurling's criterion from \cite{pietrominimal}, for the case $p=2$, applied to the subfamily $\cB^+$. Given condition (iii), we have $\ell_{\rho}(B) =1$ for all $B \in \cB^+$ and by (i), $\rho \in \Adm (\cB)$. Therefore, we only need to check the second requirement in Beurling's criterion. Let's consider any  $h \in \R^E$ such that $\sum_{e \in E}h(e)\cN(B,e) \geq 0$ for every $B \in \cB^+$. Our goal is to prove that $h^T\rho \geq 0$. Indeed, 
	
	\begin{align*}
		h^T\rho &= Zh^T\eta &(\text{by } (ii))\\
		&=Zh^T\cN^T\mu &(\text{by } (i))\\
		&=Zh^T\cN^+\mu &(\text{by definition of } \cB^+)\\
		&=Z\mu^T\cN^+h \geq 0,&
	\end{align*}
since $\cN^+h \geq 0$. Therefore, we have that $\rho$ is the optimal density for $\Mod_2(\cB)$.

Next, let $ \eta^*$ and $\mu^*$ be optimal for $\Mod_2(\widetilde{\cB})$ and $\MEO(\cB)$, respectively. Then, by (\ref{eq:mod2}), it follows that $Z\eta = \rho = \Mod_2(\cB)\eta^*$. Hence, $Z\eta(E) = \Mod_2(\cB)\eta^*(E)$. Note that, by Lemma \ref{lem:fixedsum},  $\eta(E) = \eta^*(E) = r(E)$. Consequently, we have $Z = \Mod_2(\cB)$. Then, we obtain $\eta = \eta^*$. In addition, by equation (\ref{eq:eta-rho}), $\mu$ is optimal for $\MEO(\cB)$.
\end{proof}

\subsection{Beurling sets and serial rule}
First, we present some basic properties of the modulus of the base family $\cB$. Let $\widetilde{\cB}$ be a Fulkerson dual family of $\cB$ as in Definition \ref{def:fulkerson-dual}. Let $\mu^*$ be an optimal pmf for $\MEO(\cB)$. Let $\rho^*$ and $\eta^*$ be the unique optimal densities for $\Mod_2(\cB)$ and $\Mod_2(\widetilde{\cB})$.
	
\begin{definition}\label{def:fair-bases}
A base $B \in \cB$ is called a {\it fair base} if there exists an optimal pmf $\mu^*$ for $\MEO(\cB)$ such that $\mu^*(B) > 0$. The set of all fair bases is denoted by $\cB^f$.
\end{definition}
\begin{remark}\label{rem:restrict-to-fair}
	If $\cB^f\subset\cB$ is the family of fair objects in $\cB$, then we also have
	\[
	\MEO(\cB)=\MEO(\cB^f).
	\]
	This follows by restricting optimal pmf's on $\cB$ to $\cB^f$. 
\end{remark}
\begin{lemma}\label{lem:maxC}
Let $B$ be a fair base, let $x \in E - B$, and let $C$ be the unique circuit contained in $B \cup \{ x \}$ and containing $x$. Then,
\begin{equation}\label{eq:C}
	\eta^*(x) = \max\limits_{e \in C} \eta^*(e).
\end{equation}
\end{lemma}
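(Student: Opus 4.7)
The proof plan is a local swap (exchange) argument exploiting the probabilistic interpretation of base modulus. Fix a fair base $B$ and an optimal pmf $\mu^\ast\in\cP(\cB)$ with $\mu^\ast(B)>0$ and $\cN^T\mu^\ast=\eta^\ast$. Let $x\in E-B$ and let $C=C(x,B)$ be the fundamental circuit of $x$ with respect to $B$. Since $C\subseteq B\cup\{x\}$ and $x\in C$, we have $C-\{x\}\subseteq B$. For each $y\in C-\{x\}$, part 3 of Proposition \ref{pro:basic} guarantees that
\[
B':=(B-\{y\})\cup\{x\}\in\cB.
\]
So for every element $y\in C$ other than $x$, the ``swap'' produces an honest competing base $B'$ that differs from $B$ only in the pair $\{x,y\}$.

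Now I would use this swap to perturb $\mu^\ast$. For $\epsilon\in(0,\mu^\ast(B)]$, define
\[
\mu_\epsilon:=\mu^\ast+\epsilon(\delta_{B'}-\delta_B),
\]
which is a valid pmf on $\cB$ because $\mu^\ast(B)>0$. Its element-usage vector is $\eta_\epsilon=\cN^T\mu_\epsilon$, and since $B$ and $B'$ agree outside $\{x,y\}$,
\[
\eta_\epsilon(x)=\eta^\ast(x)+\epsilon,\qquad \eta_\epsilon(y)=\eta^\ast(y)-\epsilon,\qquad \eta_\epsilon(e)=\eta^\ast(e) \text{ for } e\notin\{x,y\}.
\]
By the probabilistic interpretation (\ref{eq:modmeo}), $\mu^\ast$ minimizes $\mu^T\cN\cN^T\mu=\sum_e \eta(e)^2$, and a direct computation gives
\[
\sum_{e\in E}\eta_\epsilon(e)^2-\sum_{e\in E}\eta^\ast(e)^2=2\epsilon\bigl(\eta^\ast(x)-\eta^\ast(y)\bigr)+2\epsilon^2.
\]
Optimality of $\mu^\ast$ forces this quantity to be non-negative for all sufficiently small $\epsilon>0$; dividing by $\epsilon$ and letting $\epsilon\to 0^+$ yields $\eta^\ast(x)\geq \eta^\ast(y)$.

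Since $y\in C-\{x\}$ was arbitrary and trivially $\eta^\ast(x)\geq\eta^\ast(x)$, we conclude $\eta^\ast(x)=\max_{e\in C}\eta^\ast(e)$, as required. The only delicate point is making sure the perturbation $\mu_\epsilon$ is an admissible pmf, which is exactly why we need the fairness hypothesis $\mu^\ast(B)>0$; the rest is a routine first-order optimality calculation. Alternatively, one could invoke Theorem \ref{thm:meomod}(iii) together with the parallelism of $\rho^\ast$ and $\eta^\ast$ to express things in terms of $\ell_{\rho^\ast}$, but the direct perturbation argument seems cleanest and avoids invoking admissibility conditions on $B'$.
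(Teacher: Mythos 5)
Your proof is correct. The computation
\[
\sum_{e\in E}\eta_\epsilon(e)^2-\sum_{e\in E}\eta^\ast(e)^2=2\epsilon\bigl(\eta^\ast(x)-\eta^\ast(y)\bigr)+2\epsilon^2
\]
is accurate, the perturbation $\mu_\epsilon$ is a genuine pmf for $\epsilon\in(0,\mu^\ast(B)]$ precisely because of the fairness hypothesis, and the first-order optimality conclusion $\eta^\ast(x)\ge\eta^\ast(y)$ follows.

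The route, however, is genuinely different from the paper's. The paper first uses the parallelism $\rho^\ast=M\eta^\ast$ from Theorem \ref{thm:meomod}(ii) to translate the claim into one about $\rho^\ast$, then argues by contradiction: if $\rho^\ast(x)<\rho^\ast(e^\ast)$ for some $e^\ast\in C$, the swap base $B'=(B-\{e^\ast\})\cup\{x\}$ satisfies $\ell_{\rho^\ast}(B')<\ell_{\rho^\ast}(B)=1$, where the equality is complementary slackness (Theorem \ref{thm:meomod}(iii)) applied to a fair $B$; this contradicts admissibility of $\rho^\ast$. You instead work entirely on the probabilistic (MEO) side, perturbing the optimal pmf and reading off the first-order condition of the convex program (\ref{eq:modmeo}). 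Both arguments rest on the same matroid-theoretic swap (part 3 of Proposition \ref{pro:basic}) and are dual to one another, but your version is more self-contained: it invokes only the definition of an optimal pmf with $\cN^T\mu^\ast=\eta^\ast$ and does not route through complementary slackness or the parallelism of $\rho^\ast$ and $\eta^\ast$. The paper's version, on the other hand, expresses the obstruction directly as a violation of admissibility, which reads well alongside the Beurling-criterion style of argument used elsewhere in the paper. Either way the conclusion is the same, and your variant would serve equally well as the published proof.
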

\begin{proof}
Theorem \ref{thm:meomod} implies that (\ref{eq:C}) is equivalent to
\[\rho^*(x) = \max\limits_{e \in C} \rho^*(e).\]
Arguing by contradiction, assume that
\[\rho^*(x) < \rho^*(e^*), \qquad \text{where} \quad e^* \in \underset{e \in C}{\argmax \text{ }} \rho^*(e).\]
Since $M$ is loopless, $C$ contains at least two elements. Hence, $(C - \{ x \}) \subset B$. This implies that $e^* \in B$.
By part 3 of Proposition \ref{pro:basic}, we have $B' := (B - \{ e^* \}) \cup \{ x \}$ is a base and, recalling the total usage from (\ref{eq:total-usage}), we have
\[\ell_{\rho^*}(B') = \ell_{\rho^*}(B) - \rho^*(e^*) + \rho^*(x) < \ell_{\rho^*}(B).\]
However, since $B$ is fair, $\ell_{\rho^*}(B)=1$, by Theorem \ref{thm:meomod} (iii)
(complementary slackness). But, this contradicts the admissibility of $\rho^*$.
\end{proof}
\begin{lemma}\label{lem:positive}
We have that $\eta^*(e) > 0$ for all $e \in E$.
\end{lemma}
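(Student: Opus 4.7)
The plan is to argue by contradiction: suppose some $e_0\in E$ has $\eta^*(e_0)=0$, and derive a contradiction by exhibiting a fair base that should give $e_0$ positive usage via an exchange.

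The first step is to record the elementary but crucial observation that if $B\in\cB^f$ is a fair base, then every element of $B$ already has $\eta^*>0$. Indeed, by Theorem~\ref{thm:meomod}(i) together with (\ref{eq:eta-rho}), we can write $\eta^*=\cN^T\mu^*$, so for any $e\in B$,
\[
\eta^*(e)=\sum_{B'\in\cB,\, e\in B'}\mu^*(B')\;\geq\;\mu^*(B)\;>\;0,
\]
where the last inequality uses that $B$ is fair. Consequently, if $\eta^*(e_0)=0$, then $e_0\notin B$ for any $B\in\cB^f$. Note that $\cB^f\neq\emptyset$ because $\mu^*$ is a probability mass function on $\cB$.

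Next, I would pick any fair base $B\in\cB^f$; by the previous paragraph $e_0\in E-B$. Applying part 2 of Proposition~\ref{pro:basic}, there is a unique circuit $C=C(e_0,B)\subseteq B\cup\{e_0\}$ containing $e_0$, and by Lemma~\ref{lem:maxC} applied to the fair base $B$,
\[
0=\eta^*(e_0)=\max_{e\in C}\eta^*(e),
\]
so $\eta^*(e)=0$ for every $e\in C$. Since $M$ is loopless, every circuit has size at least two, and because $C-\{e_0\}\subseteq B$, we can choose some $y\in C\cap B$. Then $\eta^*(y)=0$, which contradicts the first paragraph's conclusion that $\eta^*>0$ on every element of the fair base $B$. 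This contradiction completes the argument.

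There is no real obstacle in this proof; the key ingredients, namely the representation $\eta^*=\cN^T\mu^*$, the non-emptiness of $\cB^f$, and Lemma~\ref{lem:maxC}, are already in hand. The only subtlety worth double-checking is that the circuit $C(e_0,B)$ really does meet $B$, which is exactly where looplessness of $M$ (guaranteeing $|C|\geq 2$) is used.
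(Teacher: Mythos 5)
Your proof is correct and follows essentially the same route as the paper: pick a fair base $B$, note $\eta^*>0$ on $B$, apply Lemma~\ref{lem:maxC} to the fundamental circuit $C(e_0,B)$, and use looplessness to see that $C$ meets $B$, yielding a contradiction. The paper states this more tersely (leaving the observation $\eta^*(e)\geq\mu^*(B)>0$ for $e\in B$ implicit), but the argument is identical.
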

\begin{proof}
Assume that there exists an element $x \in E$ such that $\eta^*(x) = 0$. Let $B$ be a fair base, then all elements $e$ in $B$ have positive $\eta^*(e)$. Let $C$ be the unique circuit contained in $B \cup \{x\}$ and containing $x$, as defined in part 2 of Proposition \ref{pro:basic}. By Lemma \ref{lem:maxC}, this leads to a contradiction.
\end{proof}

\begin{definition}\label{def:homogeneous}
Given a matroid $M(E,\cI)$, let $\cB$ be the family of bases of $M$, and let $\widetilde{\cB}$ be a Fulkerson dual family of $\cB$. Let $\rho^*$ and $\eta^*$ be the unique optimal densities for $\Mod_2(\cB)$ and $\Mod_2(\widetilde{\cB})$, respectively. Then, $M$ is said to be {\it homogeneous} if $\eta^*$ is constant, or equivalently, $\rho^*$ is constant.
\end{definition}
\begin{remark}
Later, we will show that Definition \ref{def:homogeneous} is equivalent to the concept of homogeneous matroids mentioned in the introduction. From now on, we will use Definition \ref{def:homogeneous} when discussing homogeneous matroids.
\end{remark}
	
\begin{theorem}\label{theo:etaadm}
Let $\cB$ be the base family of a matroid $M$. Let $\widetilde{\cB}$ be a Fulkerson dual family of $\cB$. Let $\eta^*$ be the optimal density for $\Mod_2(\widetilde{\cB})$. Define the density
\begin{equation}
\eta_{hom}(e) := \frac{r(E)}{|E|} \quad \forall e \in E.
\end{equation}
Then, \[ \Mod_2(\widetilde{\cB}) \geq \cE_2(\eta_{hom}) = \frac{r(E)^2}{|E|}.\]
Moreover, $M$ is homogeneous if and only if $\eta_{hom} \in \Adm(\widetilde{\cB})$.
\end{theorem}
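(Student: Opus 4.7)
The plan is to establish the inequality by a Cauchy--Schwarz/power-mean argument against a constant sum, and then to pin down the homogeneous case by invoking uniqueness of the optimal density for $\Mod_2(\widetilde{\cB})$.

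First I would record the sum constraint on the dual side. Since $\widetilde{\cB}$ is Fulkerson dual to $\cB$, Fulkerson duality gives $\Adm(\widetilde{\cB})=\Dom(\cB)=\co(\cB)+\R^E_{\geq 0}$. Thus every $\eta\in\Adm(\widetilde{\cB})$ can be written as $\eta=\cN^T\mu+w$ for some pmf $\mu\in\cP(\cB)$ and some $w\in\R^E_{\geq 0}$, and by Lemma \ref{lem:fixedsum} this yields
\begin{equation*}
\eta(E)=(\cN^T\mu)(E)+w(E)=r(E)+w(E)\geq r(E).
\end{equation*}
In particular, $\eta^*(E)=r(E)$ via Theorem \ref{thm:meomod} (which provides $\eta^*=\cN^T\mu^*$), and $\eta_{hom}(E)=|E|\cdot r(E)/|E|=r(E)$.

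For the lower bound, I would apply the standard inequality $\sum_e \eta(e)^2\geq (\sum_e \eta(e))^2/|E|$ (a direct consequence of Cauchy--Schwarz, with equality iff $\eta$ is constant) to any $\eta\in\Adm(\widetilde{\cB})$. Combining with the previous step,
\begin{equation*}
\cE_2(\eta)=\sum_{e\in E}\eta(e)^2\geq \frac{\eta(E)^2}{|E|}\geq \frac{r(E)^2}{|E|}=\cE_2(\eta_{hom}).
\end{equation*}
Taking the infimum over $\eta\in\Adm(\widetilde{\cB})$ gives $\Mod_2(\widetilde{\cB})\geq r(E)^2/|E|$, which is the first claim.

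For the equivalence, I argue both directions using uniqueness. If $M$ is homogeneous, then by Definition \ref{def:homogeneous} the optimal density $\eta^*$ is constant, and since $\eta^*(E)=r(E)$, we must have $\eta^*\equiv r(E)/|E|=\eta_{hom}$; in particular $\eta_{hom}\in\Adm(\widetilde{\cB})$. Conversely, if $\eta_{hom}\in\Adm(\widetilde{\cB})$, then $\Mod_2(\widetilde{\cB})\leq \cE_2(\eta_{hom})=r(E)^2/|E|$, and combined with the lower bound we get equality $\cE_2(\eta_{hom})=\Mod_2(\widetilde{\cB})$. By uniqueness of the optimizer for $\Mod_2(\widetilde{\cB})$ (stated in the paragraph preceding equation (\ref{eq:weighted-eta-rho})), this forces $\eta^*=\eta_{hom}$, so $\eta^*$ is constant and $M$ is homogeneous.

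No step is genuinely hard; the only place where one must be attentive is the direction of the sum inequality $\eta(E)\geq r(E)$ on $\Adm(\widetilde{\cB})$, which requires the Fulkerson identification $\Adm(\widetilde{\cB})=\Dom(\cB)$ rather than just Lemma \ref{lem:fixedsum}, since the lemma as stated only controls $\eta$ of the form $\cN^T\mu$, not arbitrary admissible densities.
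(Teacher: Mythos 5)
Your proof is correct and follows essentially the same Cauchy--Schwarz/variance argument as the paper, with the same uniqueness argument for the equivalence. The one place you diverge slightly is in your last paragraph: you establish $\eta(E)\geq r(E)$ for \emph{all} $\eta\in\Adm(\widetilde{\cB})$ via the Fulkerson identity $\Adm(\widetilde{\cB})=\Dom(\cB)$, whereas the paper simply applies the variance inequality to the optimal density $\eta^*$, for which $\eta^*(E)=r(E)$ holds by Theorem \ref{thm:meomod} (since $\eta^*=\cN^T\mu^*$) and Lemma \ref{lem:fixedsum}. That route makes your concern about Lemma \ref{lem:fixedsum} covering only densities of the form $\cN^T\mu$ moot; but your more careful version is also fine and slightly more self-contained.
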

\begin{remark}\label{rem:constant-el-prob}
Let $\mu\in\cP(\cB)$ be a pmf, and let $\eta=\cN^T\mu$ be the corresponding element usage probabilities. If $\eta$ is constant, then $\eta$ is optimal for $\Mod_2(\widetilde{\cB})$, because such $\eta$ is automatically admissible for $\widetilde{\cB}$. Indeed, $\eta\in \Dom (\cB)$. Hence by (\ref{eq:dom-adm-block}) and Definition \ref{def:fulkerson-dual}, $\eta\in\Adm (\widetilde{\cB})$.
\end{remark}
\begin{proof}
First, define the expectation and variance of a vector $\xi \in \R^E$ as:
\[\bE(\xi) := \frac{1}{|E|} \sum\limits_{e \in E} \xi(e),\qquad\text{and}\qquad\Var(\xi) := \bE(\xi^2) - (\bE(\xi))^2,\]
where the square is taken element-wise. Then, we have that
\begin{align*}
\cE_2(\eta) = \sum\limits_{e \in E} \eta(e)^2 &= |E| (\Var(\eta) + (\bE(\xi))^2) = |E| \left(\Var(\eta) + \frac{r(E)^2}{|E|^2}\right)\\
& \geq 0 +  \frac{r(E)^2}{|E|} =  \frac{r(E)^2}{|E|},
\end{align*}
where equality holds if and only if $\eta$ is constant. 
		
If $M$ is homogeneous, then $\eta^* = \frac{r(E)}{|E|} = \eta_{hom}$. If $\eta_{hom}$ is admissible for $\widetilde{\cB}$, then $\Mod_2(\widetilde{\cB})$ achieves its minimum $\frac{r(E)^2}{|E|}$ at $\eta^* = \eta_{hom}$.
\end{proof}
	
To gain a better understanding of $\eta^*$, we consider the following lemma:

\begin{lemma}\label{lem:BcapX}
Let $\cB$ be the base family of a matroid $M$, and let $B$ be a base in $\cB$. For any subset $X \subseteq E$, it holds that:
\begin{equation}\label{eq:BcapX}
r(X) \geq |B \cap X| \geq r(E) - r(E - X).
\end{equation}
\end{lemma}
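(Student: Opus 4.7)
} The plan is to split $B$ according to the partition $\{X, E-X\}$ of $E$ and use two facts: that any subset of a base is independent (hereditary property), and that an independent set inside a subset $Y \subseteq E$ has cardinality at most $r(Y)$.

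For the upper bound $|B \cap X| \leq r(X)$: since $B$ is a base, $B \in \cI$, and by the hereditary property (I2), the subset $B \cap X \subseteq B$ is also independent. Since $B \cap X \subseteq X$, the definition of the rank function $r(X) = \max\{|Y| : Y \subseteq X,\ Y \in \cI\}$ immediately gives $|B \cap X| \leq r(X)$.

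For the lower bound $|B \cap X| \geq r(E) - r(E-X)$: decompose $B$ as the disjoint union $B = (B \cap X) \sqcup (B \cap (E - X))$, so that $|B| = |B \cap X| + |B \cap (E-X)|$. By part 8 of Proposition \ref{pro:basic}, $|B| = r(E)$. Applying the hereditary argument again to $B \cap (E-X)$ yields $|B \cap (E-X)| \leq r(E-X)$. Rearranging gives $|B \cap X| = r(E) - |B \cap (E-X)| \geq r(E) - r(E-X)$, which completes the proof.

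There is no real obstacle here; both bounds are one-line consequences of the hereditary property combined with the definition of the rank function and the characterization $|B| = r(E)$ for a base. The only thing to be careful about is writing the disjoint decomposition of $B$ correctly so that the cardinalities add up.
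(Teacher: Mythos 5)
Your proof is correct and takes essentially the same approach as the paper: both decompose $B$ into $B\cap X$ and $B\cap(E-X) = B - X$, use the hereditary property to see that each piece is independent, and combine $|B| = r(E)$ with the bound $|B\cap(E-X)|\le r(E-X)$ (the paper packages the last step through Proposition \ref{pro:basic} parts 5 and 7 rather than invoking the definition of $r$ directly, but the content is identical).
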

	
\begin{proof}
Since $B$ is a base, by the hereditary property in Definition \ref{def:independent-set} (I2), both $B \cap X$ and $B - X$ are independent sets. Then, we have:
\begin{align*}
r(E - X) &\geq r(B - X)  & (\text{by Proposition \ref{pro:basic} (5)})\\
& = |B - X| & (\text{by Proposition \ref{pro:basic} (7)})\\
& = |B| - |B \cap X| \\
& = r(E) - |B \cap X| & (\text{by Proposition \ref{pro:basic} (8)}).
\end{align*}
and
\begin{align*}
r(X) &\geq r(B \cap X)   & (\text{by Proposition \ref{pro:basic} (5)})\\
& = |B \cap X| & (\text{by Proposition \ref{pro:basic} (7)}).\\
\end{align*}
	\end{proof}

The following lemma gives a relation between $\eta^*$ and the set of fair bases $\cB^{f}$. 

\begin{lemma}\label{lem:eta_tightset}
Given a matroid $M(E,\cI)$, let $\cB$ be the base family of $M$. Let $\widetilde{\cB}$ be a Fulkerson dual family of $\cB$. Let $\eta^*$ be the optimal density for $\Mod_2(\widetilde{\cB})$. Then, 
\bi
\item[(i)]		For any subset $X \subseteq E$, we have:
\begin{equation}\label{eq:etaX}
\eta^*(X) \geq r(E) - r(E - X).
\end{equation}
Moreover, equality holds in (\ref{eq:etaX}), if and only if, every fair base $B \in \cB^f$ satisfies $r(E - X) = |B - X|$.
\item[(ii)]		
For any subset $Y \subseteq E$, we have:
\begin{equation}\label{eq:etaY}
r(Y) \geq \eta^*(Y).
\end{equation}
Moreover, equality holds in (\ref{eq:etaY}), if and only if, every fair base $B \in \cB^f$ satisfies $r(Y) = |B \cap Y|$.
\ei	\end{lemma}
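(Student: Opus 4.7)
The plan is to evaluate $\eta^*$ on $X$ and on $Y$ via the probabilistic representation from Theorem \ref{thm:meomod}. Fix any optimal pmf $\mu^*$ for $\MEO(\cB)$; the theorem gives $\eta^* = \cN^T\mu^*$, so summing over a subset $A \subseteq E$ and swapping sums yields
\[
\eta^*(A) \;=\; \sum_{B \in \cB} \mu^*(B)\,|B \cap A|.
\]
Both bounds (\ref{eq:etaX}) and (\ref{eq:etaY}) are then immediate from the two-sided inequality $r(E) - r(E-A) \le |B \cap A| \le r(A)$ of Lemma \ref{lem:BcapX}, applied termwise and combined with $\sum_{B \in \cB} \mu^*(B) = 1$.

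For the equality case of part (i), observe that the summand $|B \cap X| - (r(E) - r(E-X))$ is nonnegative for every $B \in \cB$ by Lemma \ref{lem:BcapX}. Hence equality in (\ref{eq:etaX}) is equivalent to this summand vanishing for every $B$ in the support of $\mu^*$, and since $|B| = r(E)$ this condition rewrites as $|B - X| = r(E-X)$. To promote this from the support of a single optimal $\mu^*$ to every fair base $B_0 \in \cB^f$, I would apply the same argument to the optimal pmf $\mu^{**}$ guaranteed by Definition \ref{def:fair-bases} to satisfy $\mu^{**}(B_0) > 0$; since $B_0$ lies in the support of $\mu^{**}$, the equality argument forces $|B_0 - X| = r(E-X)$. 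Conversely, any optimal $\mu^*$ is supported inside $\cB^f$, so if every fair base meets the equality then the identity above collapses directly to $\eta^*(X) = r(E) - r(E-X)$.

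Part (ii) proceeds identically: the nonnegative gap is now $r(Y) - |B \cap Y|$, and requiring it to vanish on the support of an arbitrary optimal pmf — equivalently, on all of $\cB^f$ — characterizes equality in (\ref{eq:etaY}).

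The only subtlety I foresee is the two-way bookkeeping between ``support of some optimal pmf'' and ``all of $\cB^f$''. This is a direct consequence of Definition \ref{def:fair-bases} together with the fact that the representation $\eta^* = \cN^T\mu^*$ holds for \emph{every} optimal $\mu^*$ (Theorem \ref{thm:meomod}), so the equality extracted from any one optimal $\mu^*$ transfers to every fair base. No nontrivial matroid manipulation is required beyond the single application of Lemma \ref{lem:BcapX}.
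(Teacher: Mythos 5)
Your proof is correct and follows essentially the same route as the paper: expand $\eta^*(A)=\sum_{B}\mu^*(B)\,|B\cap A|$ via the optimal pmf, apply Lemma \ref{lem:BcapX} termwise, and characterize equality by vanishing of the nonnegative gap on every fair base (the paper asserts this step tersely; you spell out the transfer between the support of one optimal pmf and all of $\cB^f$, which is exactly the needed bookkeeping). The only cosmetic difference is that you prove part~(ii) directly from the other side of Lemma~\ref{lem:BcapX}, whereas the paper reduces~(ii) to~(i) via the complementation $Y=E-X$ and $\eta^*(E)=r(E)$; both are one-line derivations.
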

Let us introduce the notions of {\it Beurling sets} and {\it complement-Beurling sets} in the following definition. The term `Beurling' is inspired by Beurling’s Criterion and the notion of Beurling families in the theory of modulus; see \cite[Theorem 2.1]{pietrominimal}.
\begin{definition}\label{def:beurling-set}
The set $X$ is said to be a {\it Beurling set}, if equality holds in (\ref{eq:etaX}).
The set $Y$ is said to be a  {\it complement-Beurling set}, if equality holds in (\ref{eq:etaY}).
\end{definition}
\begin{remark}\label{rem:complement-beurling}
Note that, since $r(E)=\eta^*(E)$, a set $X$ is a Beurling set if and only if $E - X$ is a complement-Beurling set.
\end{remark}

\begin{proof}[Proof of Lemma \ref{lem:eta_tightset}]
Write $\eta^* = \cN^T\mu^*$ for some pmf $\mu^*$ optimal for the $\MEO$ problem, as in Theorem \ref{thm:meomod}. Then,
		\begin{align*}
\eta^*(X) &= \sum_{e \in X}\eta^*(e) = \sum\limits_{e \in X} \sum\limits_{B \in \cB}\mu^*(B)\cN(B,e) \\
&= \sum\limits_{B \in \cB}\mu^*(B)\sum\limits_{e \in X}\cN(B,e) = \sum\limits_{B \in \cB}\mu^*(B)|B \cap X|\\
& \ge  \sum\limits_{B \in \cB}\mu^*(B)(r(E)-r(E-X)),\\
		\end{align*}
where the last line follows by applying the second inequality in (\ref{eq:BcapX}). Finally, to complete the proof of part (i), we note that $\sum_{B \in \cB}\mu^*(B)=1$.

Moreover, the inequality in the last line holds as equality if and only if the second inequality in (\ref{eq:BcapX}) holds as equality for every fair base, see Definition \ref{def:fair-bases}.

To prove part (ii), let $X$ be defined so that $Y=E-X$, then use the same idea as in Remark \ref{rem:complement-beurling}.
\end{proof}
	
We recall that a set $X \subseteq E$ is said to be closed if $\cl(X) = X$. Now, we define a {\it complement-closed set} as follows.
	
\begin{definition}
A set $X \subseteq E$ is said to be complement-closed if $\cl(E - X) = E - X$. 
\end{definition}
	
\begin{lemma}\label{lem:bcc}
Let $X \subseteq E$. If $X$ is a Beurling set, then $X$ is a complement-closed set.
\end{lemma}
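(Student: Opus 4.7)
The plan is to argue by contradiction: assume there is some element $x\in X\cap \cl(E-X)$, and produce a fair base whose independence is violated. The inclusion $\cl(E-X)\supseteq E-X$ is automatic from Proposition \ref{pro:basic}(9), so the statement will follow once we show no element of $X$ can lie in $\cl(E-X)$.

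First I would use Lemma \ref{lem:positive} to conclude $\eta^*(x)>0$. Writing $\eta^*=\cN^T\mu^*$ for an optimal pmf $\mu^*$ on $\cB$ supported in $\cB^f$ (cf.\ Remark \ref{rem:restrict-to-fair}), the equality $\eta^*(x)=\sum_{B\in\cB^f}\mu^*(B)\ones_{x\in B}>0$ forces the existence of a fair base $B$ with $x\in B$. This is the only place the hypothesis $\eta^*>0$ gets used, and the rest of the argument is purely matroid-theoretic.

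Next I would exploit that $X$ is a Beurling set. By Lemma \ref{lem:eta_tightset}(i), every fair base satisfies $|B-X|=r(E-X)$, so $B-X$ is a maximal independent subset of $E-X$, i.e.\ a base of the restriction $M|(E-X)$. From this I want to deduce that $\cl(B-X)=\cl(E-X)$: the inclusion $\cl(B-X)\subseteq\cl(E-X)$ is monotonicity (Proposition \ref{pro:basic}(5) applied in the form of the closure operator), while for the reverse inclusion any $e\in E-X$ satisfies $r((B-X)\cup\{e\})\le r(E-X)=|B-X|=r(B-X)$, placing $e$ in $\cl(B-X)$; idempotence of closure then upgrades this to $\cl(E-X)\subseteq\cl(B-X)$.

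Finally, the punchline: since $x\in \cl(E-X)=\cl(B-X)$ and $x\notin B-X$ (because $x\in X$), we have $r((B-X)\cup\{x\})=r(B-X)=|B-X|<|(B-X)\cup\{x\}|$, so $(B-X)\cup\{x\}$ is dependent. But $x\in B$, hence $(B-X)\cup\{x\}\subseteq B$, and the hereditary property (I2) of Definition \ref{def:independent-set} makes this set independent. This contradiction completes the proof.

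I expect the only subtle point is the equality $\cl(B-X)=\cl(E-X)$, which is where the Beurling hypothesis enters; the rest is a short closure-versus-independence clash inside a fixed base. No estimates on $\rho^*$ or $\eta^*$ beyond strict positivity are needed.
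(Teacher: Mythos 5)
Your proof is correct, but it takes a genuinely different route from the paper's. The paper argues entirely at the level of $\eta^*$: assuming $x\in X\cap\cl(E-X)$, it applies the Beurling inequality (\ref{eq:etaX}) to $X-\{x\}$, noting $r(E-(X-\{x\}))=r((E-X)\cup\{x\})=r(E-X)$ by the closure assumption, to get $\eta^*(X-\{x\})\ge r(E)-r(E-X)=\eta^*(X)$; strict positivity of $\eta^*(x)$ then gives $\eta^*(X)>\eta^*(X)$, a contradiction in three lines. You instead use $\eta^*(x)>0$ to produce a fair base $B\ni x$, invoke the Beurling equality to conclude $B-X$ is a basis of $M|(E-X)$, and then manufacture a dependent subset $(B-X)\cup\{x\}$ inside the independent set $B$. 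Both rely on the same inputs (Lemma \ref{lem:positive} and Lemma \ref{lem:eta_tightset}(i)), but your argument descends to a single concrete base and produces a purely combinatorial contradiction there, whereas the paper's stays one level up with density inequalities. Your version is more structural and arguably more illuminating about why the Beurling hypothesis matters; the paper's is shorter. Two small remarks: (a) your Step 5, the full equality $\cl(B-X)=\cl(E-X)$, is more than you need --- it suffices to note $r((B-X)\cup\{x\})\le r((E-X)\cup\{x\})=r(E-X)=|B-X|$ directly, bypassing idempotence of closure; and (b) you cite monotonicity and idempotence of $\cl$, which are standard but not in the paper's Proposition \ref{pro:basic} list, so a reader of this paper would want a pointer to a reference or a one-line derivation from submodularity of $r$.
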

	
\begin{proof}
Assume that $\cl(E - X) \supsetneq E - X$. Then, there exists an element $x \in X$ such that $x \in \cl(E - X)$. By definition of the closure operator, in (\ref{eq:closure-operator}), $r((E - X) \cup \{x\}) = r(E - X)$.  Lemma \ref{lem:eta_tightset} implies
\begin{align*}
\eta^*(X) & > \eta^*(X) - \eta^*(x) = \eta^*(X - \{x\})  &\text{(by Lemma \ref{lem:positive})}\\
& \geq r(E) - r((E - X) \cup \{x\})  &\text{(by Lemma \ref{lem:eta_tightset})}\\
&= r(E) - r(E - X)   &\text{(by (\ref{eq:closure-operator}))}\\
&= \eta^*(X).   & \text{(by Definition \ref{def:beurling-set})}
\end{align*}
This results in a contradiction.
\end{proof}

Next, we aim to apply the serial rule for base modulus. To that end, we introduce the following theorem. Recall that the symbol $\oplus$ for concatenation is defined in (\ref{eq:concatenation}).
	
\begin{theorem}\label{thm:baseserial}
Given a matroid $M(E,\cI)$, let $\cB = \cB(M)$ be the base family of $M$. Let $X \subseteq E$, let $M \setminus X$ be the deletion of $X$ from $M$. Let $M / (E - X)$ be the contraction of $E - X$ in $M$. Let $\cB^X$ be the set of bases $B \in \cB$ which satisfies $r(E - X) = |B - X|$. Then,
\begin{equation}
\cB(M \setminus X) \oplus \cB(M / (E - X)) = \cB^X.
\end{equation} 
\end{theorem}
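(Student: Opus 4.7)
The plan is to prove both inclusions directly, using the rank characterization of bases and the descriptions of $\cI, \cB, r$ for deletion and contraction from Propositions \ref{pro:deletion} and \ref{pro:contraction}. Throughout, I use the disjoint split $B = (B - X) \cup (B \cap X)$ for any $B \subseteq E$, together with the rank identity $r_{M/(E-X)}(Y) = r_M((E-X) \cup Y) - r_M(E-X)$.

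For the inclusion $\cB^X \subseteq \cB(M \setminus X) \oplus \cB(M/(E-X))$, I would take $B \in \cB^X$, set $B_1 := B - X \subseteq E - X$ and $B_2 := B \cap X \subseteq X$. Since $B$ is independent in $M$, so is $B_1$, hence $B_1 \in \cI(M \setminus X)$ by Proposition \ref{pro:deletion}(1); the hypothesis $|B_1| = r(E - X) = r_{M \setminus X}(E - X)$ then upgrades this to $B_1 \in \cB(M \setminus X)$. Because $B_2 \cup B_1 = B \in \cB(M)$ and $B_1 \in \cB(M \setminus X) = \cB(M \setminus(E - (E-X)))$, Proposition \ref{pro:contraction}(3) gives $B_2 \in \cB(M/(E-X))$. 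Thus $B = B_1 \cup B_2$ lies in the concatenation.

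The reverse inclusion is the main obstacle: given $B_1 \in \cB(M \setminus X)$ and $Y \in \cB(M/(E-X))$, the definition of $\cB(M/(E-X))$ only guarantees the existence of \emph{some} $B'' \in \cB(M \setminus X)$ with $B'' \cup Y \in \cB(M)$, not necessarily $B_1 \cup Y$. I would resolve this via a rank computation rather than an exchange argument. Since $B_1$ is a maximal independent subset of $E - X$, one has $E - X \subseteq \cl_M(B_1)$, so adjoining the elements of $(E - X) - B_1$ to $B_1 \cup Y$ does not change the rank in $M$:
\[
r_M(B_1 \cup Y) = r_M((E-X) \cup Y).
\]
Next, apply Proposition \ref{pro:contraction}(4) with the pair $(E - X, Y)$ to get
\[
r_M((E-X) \cup Y) = r(E-X) + r_{M/(E-X)}(Y) = r(E-X) + |Y|,
\]
where I used that $Y \in \cB(M/(E-X))$ is independent with $|Y| = r_{M/(E-X)}(X) = r(E) - r(E-X)$. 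Combining these gives $r_M(B_1 \cup Y) = r(E)$. Since $B_1$ and $Y$ are disjoint with $|B_1 \cup Y| = r(E - X) + (r(E) - r(E - X)) = r(E)$, Proposition \ref{pro:basic}(8) yields $B_1 \cup Y \in \cB(M)$. Finally, $|(B_1 \cup Y) - X| = |B_1| = r(E-X)$, so $B_1 \cup Y \in \cB^X$, completing the proof.
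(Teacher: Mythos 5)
Your proof is correct. For the inclusion $\cB^X \subseteq \cB(M\setminus X) \oplus \cB(M/(E-X))$ you follow the paper's argument essentially verbatim: split $B$ as $(B-X)\cup(B\cap X)$, show $B-X\in\cB(M\setminus X)$ from independence plus the cardinality hypothesis $|B-X|=r(E-X)=r_{M\setminus X}(E-X)$, and then use part~3 of Proposition~\ref{pro:contraction} to place $B\cap X$ in $\cB(M/(E-X))$. For the reverse inclusion, however, you take a genuinely more careful route. The paper simply asserts, citing part~3 of Proposition~\ref{pro:contraction}, that $\cB(M\setminus X)\oplus\cB(M/(E-X))\subset\cB(M)$, and then runs the size count $|B_3-X|=r(E-X)$; but, as you rightly note, that proposition literally only guarantees that \emph{some} partner $B''\in\cB(M\setminus X)$ completes a given $Y\in\cB(M/(E-X))$ to a base of $M$, not that \emph{every} $B_1\in\cB(M\setminus X)$ does. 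Your rank computation fills this gap: from $E-X\subseteq\cl_M(B_1)$ you get $r_M(B_1\cup Y)=r_M((E-X)\cup Y)$, part~4 of Proposition~\ref{pro:contraction} evaluates the latter to $r(E)$, and the cardinality count $|B_1\cup Y|=r(E)$ together with part~8 of Proposition~\ref{pro:basic} yields $B_1\cup Y\in\cB(M)$. The trade-off is a little extra bookkeeping in exchange for not leaning on a standard-but-uncited fact (that any basis of $M|(E-X)$ together with any basis of $M/(E-X)$ forms a basis of $M$), making the argument self-contained relative to the propositions the paper actually states.
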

	
\begin{proof}
First, we aim to show that
\begin{equation}\label{eq:proof1}
\cB(M \setminus X) \oplus \cB(M / (E - X)) \subset \cB^X.
\end{equation}
By part 3 of Proposition \ref{pro:contraction}, we have
\[\cB(M \setminus X) \oplus \cB(M / (E - X)) \subset \cB(M).\]
Let $B_1 \in \cB(M \setminus X), B_2 \in \cB(M / (E - X))$, and let $B_3 = B_1 \cup B_2$. Then, we have
\begin{align*}
|B_3 - X| & = |B_1| &\text{(by construction)}\\
&= r_{M \setminus X}(B_1) &\text{(by definition of $B_1$)} \\
&= r_{M \setminus X}(E - X) & \text{(by part 8 of Proposition \ref{pro:basic})}\\
& = r_M(E - X). &\text{(by part 3 of Proposition \ref{pro:deletion})}
\end{align*}
Hence, (\ref{eq:proof1}) holds.
		
Next, we aim to show
\begin{equation}\label{eq:proof2}
			\cB(M \setminus X) \oplus \cB(M / (E - X)) \supset \cB^X.
\end{equation}
Let $B \in \cB^X$, then $B = (B - X) \cup (B \cap X)$. 

On one hand, by definition of  $\cB^X$ and part 3 of Proposition \ref{pro:deletion}, we have
\begin{equation}\label{eq:rankde}
			|B - X| = r_M(E - X) = r_{M \setminus X}(E - X).
\end{equation} 
Note that $B - X\subset B$, so $B-X$ is an independent set of $M$. By part 1 of Proposition \ref{pro:deletion}, $B - X$ is independent in $M \setminus X$. Finally, by part 8 of Proposition \ref{pro:basic} and (\ref{eq:rankde}), we have
\[(B - X) \in \cB(M \setminus X).\]
		
On the other hand, to show that
\[B \cap X \in \cB(M / (E - X)),\]
note that $(B \cap X) \cup (B - X) = B \in \cB(M)$, and $B - X \in \cB(M \setminus X)$, hence it satisfies the conditions in part 3 of Proposition \ref{pro:contraction}.
Therefore, (\ref{eq:proof2}) holds and the proof is completed.
\end{proof}
\begin{remark}
Note that in Theorem \ref{thm:baseserial}, $X$ is not necessarily closed. This will be useful later.
\end{remark}
Now we can state the serial rule for base modulus using the Beurling set property of Lemma \ref{lem:eta_tightset}, the serial rule in Theorem \ref{thm:serialmod} and the concatenation property in Theorem \ref{thm:baseserial}.
	
\begin{theorem}\label{thm:serialmodmod}
Given a matroid $M(E,\cI)$, let $\cB = \cB(M)$ be the base family of $M$, and let $\widetilde{\cB}$ be a Fulkerson dual family of $\cB$. Let $X \subseteq E$ be a Beurling set, and let $M \setminus X$ be the deletion of $X$ from the matroid $M$. Let $M / (E - X)$ be the contraction of $E -X$ in $M$.
Let $\eta^*$ be the optimal density for $\Mod_2(\widetilde{\cB})$. Then, the following hold:
\bi
\item[ (i)] The minimum expected overlap splits as 
 \begin{equation}
\MEO (\cB(M)) = \MEO (\cB(M \setminus X)) + \MEO (\cB(M /(E-X)));
\end{equation}
\item[ (ii)] A pmf $\mu \in \cP(\cB(M))$  is optimal for $\MEO(\cB(M))$ if and only if its marginal pmf $\mu_{\cB(M \setminus X)} \in \cP(\cB(M \setminus X))$ is optimal for 
$\MEO(\cB(M \setminus X))$  and its marginal pmf $\mu_{\cB(M /(E-X))} \in \cP(\cB(M /(E-X)))$ is optimal for 
$\MEO(\cB(M /(E-X)))$;
\item[ (iii)] Conversely, given  a pmf $\nu_1 \in \cP(\cB(M \setminus X))$ that is optimal for $\MEO(\cB(M \setminus X))$ and a pmf $\nu_2 \in \cP(\cB(M /(E-X) ))$ that is optimal for  $\MEO(\cB(M /(E-X)))$, then $\nu_1 \oplus \nu_2 $ is an optimal pmf in $\cP(\cB(M)) $ for $\MEO(\cB(M))$; 
\item[ (iv)] The restriction of $\eta^*$ onto $E-X$ is optimal for $\Mod_2(\widetilde{\cB}(M \setminus X)) $ and the  restriction of $\eta^*$ onto $X$  is optimal for $\Mod_2(\widetilde{\cB}(M/(E-X)))$. 
\ei
\end{theorem}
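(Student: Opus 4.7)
The plan is to reduce Theorem \ref{thm:serialmodmod} to a direct application of the general serial rule in Theorem \ref{thm:serialmod}, using the Beurling hypothesis to pass from $\cB(M)$ to the subfamily $\cB^X$ on which the partition $\{E-X,X\}$ actually divides the family. The Beurling condition $\eta^*(X)=r(E)-r(E-X)$ together with Lemma \ref{lem:eta_tightset}(i) forces every fair base $B\in\cB^f$ to satisfy $r(E-X)=|B-X|$, so that $\cB^f\subseteq\cB^X$. Combined with Remark \ref{rem:restrict-to-fair}, this gives
\[
\MEO(\cB(M))=\MEO(\cB^f)=\MEO(\cB^X),
\]
and identifies (via extension by zero) optimal pmfs for $\MEO(\cB(M))$ with optimal pmfs for $\MEO(\cB^X)$, since the overlap cardinalities $|\underline{B}\cap\underline{B}'|$ are computed the same way in both families.

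Next, Theorem \ref{thm:baseserial} gives $\cB^X=\cB(M\setminus X)\oplus\cB(M/(E-X))$, and the restriction operators $\psi_{E-X}$ and $\psi_X$ surject onto $\cB(M\setminus X)$ and $\cB(M/(E-X))$ respectively, so the partition $\{E-X,X\}$ divides $\cB^X$ in the sense of Definition \ref{def:divides-gamma}. I would then invoke Theorem \ref{thm:serialmod} with $\Ga:=\cB^X$, $\Ga_1:=\cB(M\setminus X)$, $\Ga_2:=\cB(M/(E-X))$. Parts (i), (ii), (iii) of Theorem \ref{thm:serialmodmod} follow verbatim from parts (i), (ii), (iii) of Theorem \ref{thm:serialmod}, once $\MEO(\cB^X)$ on the left-hand side is replaced by $\MEO(\cB(M))$.

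For (iv), write $\eta^*=\cN^T\mu^*$ for an optimal pmf $\mu^*$ (Theorem \ref{thm:meomod}), and let $\mu_1^*,\mu_2^*$ denote its marginals on $\cB(M\setminus X)$ and $\cB(M/(E-X))$; by part (ii) these are optimal for the respective MEO problems. By Theorem \ref{thm:serialmod}(iv), for each $e\in E-X$
\[
\eta^*(e)=\bP_{\mu^*}(e\in\underline{B})=\bP_{\mu_1^*}(e\in\underline{B_1})=(\cN^T\mu_1^*)(e),
\]
where on the right $\cN$ is the usage matrix of $\cB(M\setminus X)$. Applying Theorem \ref{thm:meomod} in reverse to the minor $M\setminus X$ then identifies $\eta^*|_{E-X}$ as the unique optimal density for $\Mod_2(\widetilde{\cB}(M\setminus X))$. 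The identical argument for $\mu_2^*$ on $X$ handles the contracted minor.

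The main obstacle is verifying the running hypotheses (loopless with positive rank) needed to apply Theorem \ref{thm:meomod} to the two minors in part (iv). Looplessness of $M\setminus X$ is immediate from looplessness of $M$; looplessness of $M/(E-X)$ requires Lemma \ref{lem:bcc}, which guarantees $\cl(E-X)=E-X$ and hence that contraction produces no loops on $X$. Positive rank of both minors holds provided $X$ is a proper non-empty subset of $E$, which is the nontrivial case; the degenerate cases $X\in\{\emptyset,E\}$ make the theorem tautological. Beyond this bookkeeping, the proof is essentially a packaging of previous results: the Beurling hypothesis is the only new ingredient, and its sole role is to compress the support of every optimal pmf into $\cB^X$, where the partition finally divides the family.
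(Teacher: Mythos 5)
Your proposal is correct and follows the same route as the paper's own proof: use the Beurling hypothesis with Lemma \ref{lem:eta_tightset} to get $\cB^f\subseteq\cB^X$ and hence $\MEO(\cB(M))=\MEO(\cB^X)$, apply Theorem \ref{thm:baseserial} to recognize $\cB^X$ as the concatenation $\cB(M\setminus X)\oplus\cB(M/(E-X))$, and then invoke the serial rule of Theorem \ref{thm:serialmod}. The paper is quite terse about parts (ii)--(iv) and about the looplessness of the contraction minor; your more explicit treatment of these points (in particular deriving (iv) via Theorem \ref{thm:serialmod}(iv) plus Theorem \ref{thm:meomod}, and noting that Lemma \ref{lem:bcc} supplies $\cl(E-X)=E-X$) fills in exactly the right details.
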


\begin{proof}
We apply Theorem \ref{thm:serialmod} for the families $\Ga_1=\cB(M \setminus X) $ and $\Ga_2=\cB(M / (E - X)) $.

By Theorem \ref{thm:baseserial}, the concatenation of $\Ga_1$ and $\Ga_2$ in this case is given by 
\begin{equation*}
\cB(M \setminus X) \oplus \cB(M / (E - X)) = \cB^X,
\end{equation*} 
where $\cB^X$ is the set of bases $B \in \cB$ which satisfy $r(E - X) = |B - X|$. 

Furthermore, by Lemma \ref{lem:eta_tightset}, since $X$ is a Beurling set, it follows that $\cB^f \subset \cB^X$. Consequently, by Remark \ref{rem:restrict-to-fair}, $\MEO(\cB(M)) = \MEO(\cB^X)$. The rest of the proof follows by applying Theorem \ref{thm:serialmod}.
\end{proof}
	
\section{Strength and Fractional Arboricity}\label{sec:strength}
In this section, we demonstrate that base modulus recovers known results in the theory of principal partition of matroids, see \cite{catlin1992,fujishige2009theory}.
	
\begin{theorem}\label{thm:max}
Given a loopless matroid $M=(E,\cI)$, let $\cB$ be the base family of $M$, and let $\widetilde{\cB}$ be a Fulkerson dual family of $\cB$. Let $\mu^*$ be an optimal pmf for $\MEO(\cB)$, and let $\eta^* = \cN^T\mu^*$ be the optimal density for $\Mod_2(\widetilde{\cB})$. Let $S(M)$ be the strength of $M$. Define:
\begin{equation}\label{eq:max}
X := E_{\max} := \left\{ e \in E : \eta^*(e) = \max\limits_{e' \in E} \eta^*(e') =: \eta^*_{\max} \right\}.
\end{equation}
Then, we have the following properties:
\begin{itemize}
\item[(1)] $X$ is a Beurling set.
\item[(2)] $\cl(E - X) = E - X$.
\item[(3)] \[\eta^*_{\max} = \frac{r(E) - r(E - X)}{|X|}.\]
\item[(4)] \[\eta^*_{\max} = \frac{1}{S(M)}.\]
\item[(5)] The matroid $M / (E - E_{\max})$ is homogeneous.
\end{itemize}
\end{theorem}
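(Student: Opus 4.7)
The plan is to use Lemma \ref{lem:eta_tightset}(i) as the backbone: once (1) is established, parts (2) and (3) fall out quickly, (4) comes from matching an upper bound against the variational definition of $S(M)$, and (5) is an application of the serial rule.

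For part (1), by the equality case of Lemma \ref{lem:eta_tightset}(i) it suffices to show that every fair base $B \in \cB^f$ satisfies $|B - X| = r(E - X)$. Since $B \cap (E-X)$ is independent, Lemma \ref{lem:BcapX} gives $|B-X|\le r(E-X)$. For the reverse, I would argue by contradiction: if $|B \cap (E-X)| < r(E-X)$, then by the exchange property (I3) I can find $x \in (E - X) \setminus B$ such that $(B \cap (E-X)) \cup \{x\}$ is independent. The fundamental circuit $C(x,B)$ from Proposition \ref{pro:basic}(2) must then meet $B \cap X$ (otherwise it would sit inside the independent set $(B \cap (E-X)) \cup \{x\}$). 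Pick any $y \in C(x,B) \cap X$, so $\eta^*(y) = \eta^*_{\max}$. By Lemma \ref{lem:maxC} applied to the fair base $B$ and the element $x \notin B$,
\[
\eta^*(x) \;=\; \max_{e \in C(x,B)} \eta^*(e) \;\geq\; \eta^*(y) \;=\; \eta^*_{\max},
\]
forcing $x \in X$, a contradiction. I expect this fundamental-circuit step to be the main technical obstacle. Once it is done, part (2) is immediate from Lemma \ref{lem:bcc}, and part (3) follows because $\eta^*$ is constant on $X$, so $|X|\,\eta^*_{\max} = \eta^*(X) = r(E)-r(E-X)$ by the Beurling equality from (1).

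For part (4), I use Lemma \ref{lem:eta_tightset}(i) as an upper bound. For any $X' \subseteq E$ with $r(E) - r(E-X') > 0$,
\[
\frac{r(E) - r(E-X')}{|X'|} \;\le\; \frac{\eta^*(X')}{|X'|} \;\le\; \eta^*_{\max},
\]
so $1/S(M) = \max_{X'} (r(E)-r(E-X'))/|X'| \le \eta^*_{\max}$. Equality is achieved at $X' = X$ by part (3), noting that $r(E-X) < r(E)$ because Lemma \ref{lem:positive} forces $\eta^*_{\max} > 0$.

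Finally, part (5) uses the serial rule of Theorem \ref{thm:serialmodmod}, which applies since $X$ is a Beurling set by (1). Part (iv) of that theorem gives that $\eta^*|_X$ is optimal for $\Mod_2(\widetilde{\cB}(M/(E-X)))$; but $\eta^*|_X \equiv \eta^*_{\max}$ is constant, so by Definition \ref{def:homogeneous} the matroid $M/(E-X)$ is homogeneous. A small sanity check is needed that $M/(E-X)$ is a loopless matroid of positive rank so that the definition applies: looplessness follows because an element $e \in X$ would be a loop in $M/(E-X)$ iff $r_M((E-X)\cup\{e\}) = r_M(E-X)$, i.e.\ iff $e \in \cl(E-X)$, which by part (2) equals $E-X$, contradicting $e \in X$; and $r_{M/(E-X)}(X) = r(E) - r(E-X) > 0$ as already noted.
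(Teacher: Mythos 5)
Your proposal matches the paper's argument essentially step for step: the contradiction via Lemma \ref{lem:maxC} together with a fundamental circuit for part (1), Lemma \ref{lem:bcc} for (2), the Beurling equality for (3), the two-sided bound from Lemma \ref{lem:eta_tightset}(i) for (4), and Theorem \ref{thm:serialmodmod}(iv) for (5). The sanity checks you add in (4) and (5) --- that $r(E-X) < r(E)$, and that $M/(E-X)$ is loopless of positive rank so Definition \ref{def:homogeneous} applies --- are worthwhile details that the paper leaves implicit.
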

\begin{proof}
For part 1, Lemma \ref{lem:eta_tightset} (i) indicates that the statement is equivalent to $r(E - X) = |B - X|$ for any fair base $B \in \cB^f$. Suppose $r(E - X) > |B - X|$ for some fair base $B$. Let $S$ be a maximal independent set in $E-X$, then $|S| = r(E-X)  > |B - X|$. Note that $B - X$ and $S$ are independent and $(B - X) \subset (E - X)$. By the exchange property (I3) of Definition \ref{def:independent-set}, there exists $y \in (S - (B-X)) \subset ((E - X) - (B - X))$ such that $(B - X) \cup \{y\}$ is independent.
Since $B$ is a base and $y \notin B$, let $C$ be the unique circuit within $B \cup \{y\}$, containing $y$. Since $(B - X) \cup \{y\}$ is independent, we have that $C \not\subseteq (B - X) \cup \{y\}$. Consequently, $C \cap X \neq \emptyset$. Let $x$ be an element in $C \cap X$. By Lemma \ref{lem:maxC}, we have $\eta^*(y) \geq \eta^*(x)=\eta^*_{max}$. This is a contradiction because $y \in S \subset E- X$. Therefore, $X$ is a Beurling set, and part 1 is proved.

Part 2 follows from Lemma \ref{lem:bcc}. 

Furthermore, part 3 holds because
\begin{align*}
\eta^*_{max} & = \frac{\eta^*(X)}{|X|} &\text{(by definition of $X$)}\\
& = \frac{r(E) - r(E - X)}{|X|}. &\text{(since $X$ is a Beurling set)}
 \end{align*}

For part 4, note that for any set $\emptyset \neq Y \subseteq E$, we have
\begin{align*}
\eta^*_{max}&\geq \frac{\eta^*(Y)}{|Y|} & (\text{by definition of $\eta^*_{max}$})\\
&\geq \frac{r(E) - r(E - Y)}{|Y|}. & (\text{by Lemma \ref{lem:eta_tightset} (i)})
\end{align*}
This implies that, for any set $Y \subseteq E$ with $r(E) > r(E - Y)$, we can write
\[\frac{1}{\eta^*_{max}} \leq \frac{|Y|}{r(E) - r(E - Y)}.\]
Thus, by definition of the strength problem in (\ref{eq:strength-problem}), we have
\[
\frac{1}{\eta^*_{max}} \leq S(M).
\]
Moreover, equality holds by part 3, and the minimum is achieved by the set $X$.
	
Finally, by Theorem \ref{thm:serialmodmod} (iv), the restriction of $\eta^*$ to $E_{max}$ is optimal for $\Mod_2(\tilde{\cB}(M/(E-E_{max}))$. Also, $\eta^*$ is constant on $E_{max}$. Therefore, part 5 holds.
	
\end{proof}
	
\begin{theorem}\label{thm:min}
Given a loopless matroid $M = (E, \cI)$, let $\cB$ be the base family of $M$, and let $\widetilde{\cB}$ be a Fulkerson dual family of $\cB$. Let $\mu^*$ be an optimal pmf for the $\MEO(\cB)$ problem, and let $\eta^* = \cN^T \mu^*$ be the optimal density for $\Mod_2(\widetilde{\cB})$. We define
\begin{equation}\label{eq:min}
Y := E_{min} := \left\{ e \in E : \eta^*(e) = \min_{e' \in E} \eta^*(e') =: \eta^*_{min} \right\}.
\end{equation}
Then,
\begin{enumerate}
\item $Y$ is a complement-Beurling set.
\item $\cl(Y) = Y$.
\item \[\eta^*_{min} = \frac{r(Y)}{|Y|} > 0.\]
\item \[\eta^*_{min} = \frac{1}{D(M)}.\]
\item The matroid $M \setminus (E - E_{min})$ is homogeneous.
\end{enumerate}
\end{theorem}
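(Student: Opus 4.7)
The plan is to mirror the proof of Theorem \ref{thm:max}, with Beurling sets replaced by complement-Beurling sets (see Remark \ref{rem:complement-beurling}) and maxima replaced by minima, reading off all five claims from Lemma \ref{lem:eta_tightset}(ii) together with Lemma \ref{lem:maxC}.

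For part 1, by Lemma \ref{lem:eta_tightset}(ii) it suffices to show that every fair base $B \in \cB^f$ satisfies $r(Y) = |B \cap Y|$. Suppose for contradiction that some fair base $B$ has $|B \cap Y| < r(Y)$, and pick a maximal independent subset $S$ of $Y$ with $|S| = r(Y)$. Since $B \cap Y$ is independent and $|S| > |B \cap Y|$, the exchange property (I3) of Definition \ref{def:independent-set} yields $y \in S - (B \cap Y) \subseteq Y$ with $(B \cap Y) \cup \{y\}$ independent; in particular $y \notin B$. Let $C = C(y,B) \subseteq B \cup \{y\}$ be the unique circuit through $y$ (Proposition \ref{pro:basic}(2)). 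Since $(B \cap Y) \cup \{y\}$ is independent, $C$ cannot be contained in $(B \cap Y) \cup \{y\}$, so there must exist $x \in C \cap (B - Y)$. Then Lemma \ref{lem:maxC} forces $\eta^*(y) \geq \eta^*(x)$; but $y \in E_{min}$ gives $\eta^*(y) = \eta^*_{min}$, while $x \notin E_{min}$ gives $\eta^*(x) > \eta^*_{min}$, a contradiction.

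For part 2, I would record the dual of Lemma \ref{lem:bcc}: any complement-Beurling set $Y$ is closed. Indeed if $\cl(Y) \supsetneq Y$, pick $y \in \cl(Y) - Y$, so that $r(Y \cup \{y\}) = r(Y)$ by (\ref{eq:closure-operator}). Lemma \ref{lem:positive} gives $\eta^*(y) > 0$, and then Lemma \ref{lem:eta_tightset}(ii) yields $r(Y \cup \{y\}) \geq \eta^*(Y) + \eta^*(y) > \eta^*(Y) = r(Y)$, a contradiction. Part 3 is then immediate: $\eta^*$ is constant on $Y$, so by part 1, $|Y|\,\eta^*_{min} = \eta^*(Y) = r(Y)$, and positivity follows from Lemma \ref{lem:positive}.

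For part 4, for any $X \subseteq E$ with $r(X) > 0$, the pointwise bound $\eta^*_{min} \leq \eta^*(X)/|X|$ and Lemma \ref{lem:eta_tightset}(ii) give $|X|/r(X) \leq 1/\eta^*_{min}$, hence $D(M) \leq 1/\eta^*_{min}$; the reverse inequality is attained at $X = Y$ by part 3. For part 5, Remark \ref{rem:complement-beurling} says $E - Y$ is a Beurling set, so Theorem \ref{thm:serialmodmod}(iv) applied with $X = E - Y$ shows that the restriction of $\eta^*$ to $E - X = Y$ is optimal for $\Mod_2(\widetilde{\cB}(M \setminus (E - E_{min})))$; since this restriction is constant, the matroid is homogeneous by Definition \ref{def:homogeneous}. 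The main obstacle is the exchange argument in part 1: one must ensure the swapped-in element $y$ lies in $Y$ while the resulting circuit element $x$ lies in $B - Y$, which is precisely where the strict inequality $\eta^*(x) > \eta^*_{min}$ is needed to break the bound from Lemma \ref{lem:maxC}.
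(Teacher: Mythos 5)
Your proposal is correct and follows essentially the same approach as the paper: part 1 uses the identical exchange-property contradiction with Lemma \ref{lem:maxC}, parts 3–4 use the same complement-Beurling identity and pointwise bound, and part 5 invokes Theorem \ref{thm:serialmodmod}(iv) exactly as the paper does. The only cosmetic difference is in part 2, where you re-prove the dual of Lemma \ref{lem:bcc} directly rather than citing Lemma \ref{lem:bcc} via Remark \ref{rem:complement-beurling}, but the underlying argument is the same.
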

	
\begin{proof}
For part 1, by Lemma \ref{lem:eta_tightset} (ii), the statement is equivalent to $|B \cap Y| = r(Y)$ for any fair base $B \in \cB^f$. By Lemma \ref{lem:BcapX}, we have $|B \cap Y| = r(B \cap Y) \leq r(Y)$ for any base $B$. Suppose that $|B \cap Y| < r(Y)$ for some fair base $B$. 
Let $T$ be a maximal independent set in $Y$, then $|T| = r(Y)  > |B \cap Y|$. Note that  $B \cap Y$  and $T$ are independent and $(B \cap Y) \subset Y$. By the exchange property (I3) of Definition \ref{def:independent-set}, there exists $z \in (T - (B \cap Y)) = T - B$ such that $(B \cap Y) \cup \{z\}$ is independent.
Since $B$ is a base and $z \notin B$, let $C$ be the unique circuit within $B \cup \{z\}$, containing $z$. Since $(B \cap Y) \cup \{z\}$ is independent, it follows that $C \not\subseteq (B \cap Y) \cup \{z\}$. Consequently, $C - Y \neq \emptyset$. Let $x$ be an element in $C - Y$. By Lemma \ref{lem:maxC}, we have $ \eta^*_{min} = \eta^*(z) \geq \eta^*(x)$. This leads to a contradiction as $x \notin Y$. Therefore, $Y$ is a complement-Beurling set.
			
Part 2 follows from by Lemma \ref{lem:bcc}. 

Since $Y$ is a complement-Beurling set, we have
\begin{align*}
\eta^*_{min} &= \frac{\eta^*(Y)}{|Y|} &	(\text{by  definition of $Y$})\\
& = \frac{r(Y)}{|Y|}  & (\text{by the definition of complement-Beurling sets})\\
& >0, &(\text{$Y$ is nonempty and $M$ is loopless})
\end{align*}
and this proves part 3.
		
For part 4, consider any nonempty set $Z \subseteq E$, we have 
\begin{align*}
\eta^*_{min} &\leq \frac{\eta^*(Z)}{|Z|} & (\text{by definition of $\eta^*_{min}$})\\
&\leq \frac{r(Z)}{|Z|}. & (\text{by Lemma \ref{lem:eta_tightset} (ii)})
\end{align*}
This implies that for any nonempty set $Z \subseteq E$ with $r(Z) > 0$, we have
\[\frac{1}{\eta^*_{min}} \geq \frac{|Z|}{r(Z)}.\]
Therefore, the fractional arboricity problem reaches its maximum at the set $Y$.
		
Part 5 follows from Theorem \ref{thm:serialmodmod} (iv).
\end{proof}

Next, we show  that Theorems \ref{thm:max} and \ref{thm:min} can be used to recover known characterizations of the set of matroids $M$ for which $S(M) = D(M)$, as given in \cite[Theorem 6]{catlin1992}. 
	
\begin{corollary}\label{coro:homogeneous} Given a matroid $M(E,\cI)$ with rank function $r$.  Let $S(M)$ be the strength of $M$, $D(M)$ be the fractional arboricity of $M$, and $\theta(M)$ be the density of $M$. Then, the following statements are equivalent:
\bi 
	\item[ (i)]  $M$ is homogeneous.
	\item[ (ii)]   $S(M)=\theta(M).$
	\item[ (iii)]   $D(M) =\theta(M)$.
	\item[ (iv)]   $S(M)=D(M)$.
		
\ei
\end{corollary}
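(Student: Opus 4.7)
The plan is to leverage the two identities already established in this section, namely Theorem \ref{thm:max}(4), which gives $\eta^*_{\max} = 1/S(M)$, and Theorem \ref{thm:min}(4), which gives $\eta^*_{\min} = 1/D(M)$, together with the normalization $\eta^*(E) = r(E)$ from Lemma \ref{lem:fixedsum}. Interpreting $\theta(M) = |E|/r(E)$ (so that $1/\theta(M) = r(E)/|E|$ is the ``average value'' of $\eta^*$), homogeneity of $M$ means exactly that $\eta^*$ is constant on $E$, and by the normalization this constant must equal $r(E)/|E| = 1/\theta(M)$. With these reductions, all four conditions translate into statements about $\eta^*_{\max}$ and $\eta^*_{\min}$, and the equivalences become elementary.

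For (i) $\Leftrightarrow$ (iv), I would observe that $\eta^*_{\min} \le \eta^*_{\max}$, with equality iff $\eta^*$ is constant (i.e.\ iff $M$ is homogeneous by Definition \ref{def:homogeneous}). Inverting via the two theorems, this is equivalent to $S(M) = D(M)$.

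For the remaining implications, I would use a simple averaging argument. If $M$ is homogeneous, then $\eta^*_{\max} = \eta^*_{\min} = r(E)/|E|$, so $S(M) = D(M) = |E|/r(E) = \theta(M)$, proving (i) $\Rightarrow$ (ii) and (i) $\Rightarrow$ (iii). Conversely, suppose (ii) holds, so $\eta^*_{\max} = 1/S(M) = 1/\theta(M) = r(E)/|E|$. Since $\eta^*(e) \le \eta^*_{\max}$ for every $e \in E$, summation with Lemma \ref{lem:fixedsum} gives
\begin{equation*}
r(E) = \sum_{e \in E} \eta^*(e) \le |E|\,\eta^*_{\max} = r(E),
\end{equation*}
forcing $\eta^*(e) = \eta^*_{\max}$ for every $e$, hence (i). The implication (iii) $\Rightarrow$ (i) is symmetric: $\eta^*_{\min} = r(E)/|E|$ combined with $\eta^*(e) \ge \eta^*_{\min}$ and $\eta^*(E) = r(E)$ again forces $\eta^*$ to be constant.

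There is no real obstacle here; the only mildly delicate point is pinning down what $\theta(M)$ denotes. Since $\theta(M)$ is not defined inside this excerpt, I would state at the outset that $\theta(M) := |E|/r(E)$, which is the standard definition of density and is consistent with Theorem \ref{theo:etaadm} (where $\eta_{hom} = r(E)/|E|$ plays the role of the constant ``fair share'' of each element). Once that convention is fixed, the entire chain of equivalences reduces to the two-line averaging argument above.
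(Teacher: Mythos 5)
Your proposal is correct and follows essentially the same approach as the paper: both proofs rest on Theorem~\ref{thm:max}(4), Theorem~\ref{thm:min}(4), and an averaging argument exploiting the normalization $\eta^*(E) = r(E)$ to show that equality between $1/\theta(M)$ and either $\eta^*_{\max}$ or $\eta^*_{\min}$ forces $\eta^*$ to be constant. The paper organizes the argument around the chain of inequalities $(\ref{eq:inequalities-sequence})$ and reduces to the implications (ii)$\Rightarrow$(i) and (iii)$\Rightarrow$(i), but the underlying computation is the same averaging you carried out; you also correctly supplied the implicit definition $\theta(M) = |E|/r(E)$.
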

	
\begin{proof}
By Definition \ref{def:homogeneous}, $M$ is homogeneous if and only if $\eta^*_{max} = \eta^*_{min}$. By Theorems \ref{thm:max} and \ref{thm:min} and definitions of $S(M)$,  $D(M)$, $\theta(M)$, we have
\begin{equation}\label{eq:inequalities-sequence}
\frac{1}{\eta^*_{max}}  =S(M) \leq \theta(M) \leq D(M) = \frac{1}{\eta^*_{min}}.
\end{equation}
Therefore, it is enough to show that (ii) implies (i) and (iii) implies (i). 
		
Assume that we have (ii), then
\begin{align*}
\eta^*_{max} &\geq \frac{\eta^*(E)}{|E|}& (\text{by definition of $\eta^*_{max}$})\\
&  \geq \frac{r(E) - r(E \setminus E)}{|E|} & (\text{by Lemma \ref{lem:eta_tightset} (i)})\\
&= \eta^*_{max}. & (\text{by the assumption})	
\end{align*}
This implies that equality holds throughout, hence $\eta^*$ is  constant, because its average is equal to its max. 

Finally, assume that we have (iii), then we obtain 
\begin{align*}
	\eta^*_{min} &\leq \frac{\eta^*(E)}{|E|}& (\text{by definition of $\eta^*_{min}$})\\
	&  \leq \frac{r(E)}{|E|} & (\text{by Lemma \ref{lem:eta_tightset} (ii)})\\
	&= \eta^*_{min}. & (\text{by the assumption})	
\end{align*}
This implies that equality holds throughout and $\eta^*$ is a constant. 

\end{proof}

\section{Base packing and base covering problems}\label{sec:basepacking}	
	
In this section, we present an alternative proof of Theorem \ref{nash}, using the theory of base modulus. To lay the groundwork for this proof, we provide several lemmas.

Given a matroid $M(E,\cI)$, let $\cB = \cB(M)$ be the base family of $M$, and let $\widetilde{\cB}$ be a Fulkerson dual family of $\cB$. Let $\eta^* = \cN^T\mu^*$ be the optimal density for $\Mod_2(\widetilde{\cB})$.
Let $S(M), D(M),\theta(M), \tau(M),$ and $\upsilon(M)$ be the strength, the fractional arboricity, the density, the optimal values of the base packing problem (\ref{eq:packing-tree}), and the base covering problem (\ref{eq:base-covering-tree}) of $M$, respectively.
	
\begin{lemma}
\label{lem:5inequalities}
We have the following chain of inequalities:
\begin{equation}\label{eq:longone}
\frac{1}{\eta^*_{max}} = S(M) \leq \tau(M) \leq \theta(M) \leq \upsilon(M) \leq D(M) = \frac{1}{\eta^*_{min}}.
\end{equation}
\end{lemma}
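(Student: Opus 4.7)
The two equalities at the endpoints, $S(M) = 1/\eta^*_{\max}$ and $D(M) = 1/\eta^*_{\min}$, have already been established in Theorem \ref{thm:max}(4) and Theorem \ref{thm:min}(4), so the plan is to chain together the four middle inequalities. The key tool throughout will be the pmf $\mu^*$ and the relation $\eta^* = \cN^T\mu^*$, together with the fact that $\sum_B \cN(B,e) = |B| = r(E)$ for every base.

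For the two inequalities $S(M) \le \tau(M)$ and $\upsilon(M) \le D(M)$, the idea is to rescale $\mu^*$ into a feasible point of the relevant LP. First, I would define $\lambda(B) := \mu^*(B)/\eta^*_{\max}$ and check that it is feasible for the base packing problem (\ref{eq:packing-tree}): for every $e \in E$,
\[
\sum_{B \in \cB : e \in B} \lambda(B) = \frac{1}{\eta^*_{\max}}\sum_{B \in \cB}\mu^*(B)\cN(B,e) = \frac{\eta^*(e)}{\eta^*_{\max}} \le 1,
\]
and the objective value is $\sum_B \lambda(B) = 1/\eta^*_{\max} = S(M)$, giving $\tau(M) \ge S(M)$. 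Symmetrically, setting $\kappa(B) := \mu^*(B)/\eta^*_{\min}$ yields a feasible point for the base covering problem (\ref{eq:base-covering-tree}) (using Lemma \ref{lem:positive} to ensure $\eta^*_{\min}>0$, and the same computation with $\ge 1$ instead of $\le 1$), with objective value $1/\eta^*_{\min} = D(M)$, so $\upsilon(M) \le D(M)$.

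For the two inequalities $\tau(M) \le \theta(M)$ and $\theta(M) \le \upsilon(M)$ sandwiching the density $\theta(M) = |E|/r(E)$, the idea is to aggregate the LP constraints by summing them over $e \in E$. For any feasible $\lambda$ for (\ref{eq:packing-tree}),
\[
|E| \ge \sum_{e \in E}\sum_{B \in \cB : e \in B}\lambda(B) = \sum_{B \in \cB}\lambda(B)|B| = r(E)\sum_{B \in \cB}\lambda(B),
\]
using that $|B|=r(E)$ by Proposition \ref{pro:basic}(8); taking the supremum over $\lambda$ gives $\tau(M) \le |E|/r(E) = \theta(M)$. The same swap of summation, with $\ge$ reversed, applied to any feasible $\kappa$ for (\ref{eq:base-covering-tree}), yields $|E| \le r(E)\sum_B \kappa(B)$, so $\upsilon(M) \ge \theta(M)$.

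No step here looks genuinely hard; the only subtlety is to make sure the rescaled $\mu^*$ really lands in the feasible set of the LP (which uses Lemma \ref{lem:positive} in the covering case, so that division by $\eta^*_{\min}$ is well-defined, and uses the identity $\cN^T\mu^* = \eta^*$ in both cases). Once the four middle inequalities are assembled, they concatenate with the two endpoint equalities from Theorems \ref{thm:max} and \ref{thm:min} to give the full chain (\ref{eq:longone}).
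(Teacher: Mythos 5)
Your proposal matches the paper's proof essentially step for step: the same rescaling of $\mu^*$ by $1/\eta^*_{\max}$ (resp.\ $1/\eta^*_{\min}$) to produce feasible points of the packing (resp.\ covering) LP, and the same summation-over-$e$ argument, using $|B|=r(E)$, to sandwich $\theta(M)$ between $\tau(M)$ and $\upsilon(M)$. The only cosmetic difference is that you aggregate constraints for an arbitrary feasible $\lambda$ and then take a supremum, while the paper works directly with an optimal $\lambda^*$; this changes nothing.
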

\begin{proof} 
		
Note that in (\ref{eq:inequalities-sequence}) we have already established the two equalities. Therefore, it remains to show the four inequalities.
		
\begin{enumerate}
\item We aim to show that
\[\frac{1}{\eta^*_{max}} \leq \tau(M).\]
Let $\mu^*$ be an optimal probability mass function (pmf) for $\MEO(\cB)$. We will show that the density $\la=\mu^*/\eta^*_{max} \in \R^{\cB}_{\geq 0}$ satisfies all the constraints of the base packing problem (\ref{eq:packing-tree}). Indeed,  for a fixed $e\in E$, using Theorem \ref{thm:meomod} (i), we see that
\[\sum_{ B \in \cB: e \in B} \frac{\mu^*(B)}{\eta^*_{max}} = \frac{\eta^*(e)}{\eta^*_{max}} \leq 1.\]
Therefore,
\[\tau(M) \geq \sum_{B \in \cB} \frac{\mu^*(B)}{\eta^*_{max}} = \frac{1}{\eta^*_{max}}.\]
\item 
Likewise, we want to show that

\[\upsilon(M) \leq \frac{1}{\eta^*_{min}}.\]

Let $\mu^*$ be an optimal probability mass function (pmf) for $\MEO(\cB)$. Following the approach in part 1, we show that the density $\kappa=\mu^*/\eta^*_{min} \in \R^{\cB}_{\geq 0}$ satisfies all the constraints of the base covering problem (\ref{eq:base-covering-tree}). Again, for a fixed $e\in E$, by Theorem \ref{thm:meomod} (i), we have
\[\sum_{ B \in \cB: e \in B} \frac{\mu^*(B)}{\eta^*_{min}} = \frac{\eta^*(e)}{\eta^*_{\min}} \geq 1.\]
Therefore,
\[\upsilon(M) \leq \sum_{B \in \cB} \frac{\mu^*(B)}{\eta^*_{min}} = \frac{1}{\eta^*_{min}}.\]

\item Next, we aim to show that
\[\tau(M) \leq \theta(M).\]
Let $\lambda^*$ be an optimal solution for the base packing problem (\ref{eq:packing-tree}). We have
\begin{align}\label{eq:PV-THETA}
\tau(M)r(E) & = \sum_{B \in \cB} r(E)\lambda^*(B) & \text{(by definition of $\la^*$)}\notag\\
& =  \sum_{B \in \cB} \sum_{e \in E}\lambda^*(B)\cN(B,e)  & \text{(since $r(E)=|B|$)} \notag\\
& = \sum_{e \in E}\sum_{B \in \cB}\lambda^*(B)\cN(B,e)  \leq \sum_{e \in E} 1 = |E|,&\text{(by constraint in (\ref{eq:packing-tree}))}
\end{align}
where equality holds if and only if 
\[\sum_{B \in \cB}\lambda^*(B) \cN(B,e) = 1, \quad \forall e \in E.\]
Consequently, we conclude that $\tau(M) \leq |E| / r(E)=\theta(M)$.
\item Lastly, we establish that
\[\theta(M) \leq \upsilon(M).\]
Let $\kappa^* \in \R^{\cB}_{\geq 0 }$ be an optimal solution for the base covering problem (\ref{eq:base-covering-tree}). We derive
\begin{align}\label{eq:CV-THETA}
\upsilon(M)r(E) & = \sum_{B \in \cB} r(E)\kappa^*(B)   & \text{(by definition of $\kappa^*$)}\notag\\
& =  \sum_{B \in \cB} \sum_{e \in E} \kappa^*(B)\cN(B,e)& \text{(since $r(E)=|B|$)} \notag\\
& =  \sum_{e \in E}\sum_{ B \in \cB} \kappa^*(B)\cN(B,e) \geq \sum_{e \in E} 1 = |E|,&\text{(by constraint in (\ref{eq:base-covering-tree}))}
\end{align}
where the equality holds if and only if 
\[\sum_{B \in \cB} \kappa^*(B)\cN(B,e) = 1, \quad \forall e \in E.\]
Consequently, this implies that $\upsilon(M) \geq |E| / r(E)=\theta(M)$.
\end{enumerate}
\end{proof}
\begin{remark}\label{re:homo}
 Based on the chain of inequalities (\ref{eq:longone}), if $\eta^*$ is constant, then it follows that $\tau(M) = \theta(M) = \upsilon(M)$.
\end{remark}
The next corollary gives a converse for Remark \ref{re:homo}.
\begin{corollary}\label{coro:eta1}
The following statements hold:
\begin{enumerate}
\item If $\tau(M) = \theta(M)$, then $\eta^*$ is constant.
\item If $\upsilon(M) = \theta(M)$, then $\eta^*$ is constant.
\end{enumerate}
\end{corollary}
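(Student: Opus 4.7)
The plan is to extract from the equality cases in the proof of Lemma \ref{lem:5inequalities} an optimal pmf whose element usage probability vector is constant, and then invoke Remark \ref{rem:constant-el-prob} together with uniqueness of $\eta^*$ to conclude that $\eta^*$ itself is constant.

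For part 1, assume $\tau(M)=\theta(M)=|E|/r(E)$. Tracing the inequality (\ref{eq:PV-THETA}) in the proof of Lemma \ref{lem:5inequalities}, the assumption forces equality throughout, so an optimal $\lambda^*$ for the base packing problem (\ref{eq:packing-tree}) must in fact satisfy
\[
\sum_{B\in\cB}\lambda^*(B)\cN(B,e)=1 \qquad \forall e\in E.
\]
I would then set $\mu(B):=\lambda^*(B)/\tau(M)$. Since $\sum_B\lambda^*(B)=\tau(M)>0$ by definition of the packing value (it coincides with $S(M)>0$), $\mu$ is a bona fide pmf on $\cB$. Its element usage probability vector is
\[
(\cN^T\mu)(e)=\frac{1}{\tau(M)}\sum_B\lambda^*(B)\cN(B,e)=\frac{1}{\tau(M)}=\frac{r(E)}{|E|},
\]
which is constant. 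By Remark \ref{rem:constant-el-prob} any constant $\eta$ arising as $\cN^T\mu$ is automatically optimal for $\Mod_2(\widetilde{\cB})$, and by uniqueness of the optimizer $\eta^*$ we conclude $\eta^*=\cN^T\mu$ is constant.

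For part 2, assume $\upsilon(M)=\theta(M)$. The same argument applies verbatim but starting from (\ref{eq:CV-THETA}): equality forces an optimal $\kappa^*$ for the base covering problem (\ref{eq:base-covering-tree}) to satisfy $\sum_{B\in\cB}\kappa^*(B)\cN(B,e)=1$ for all $e\in E$. Setting $\mu(B):=\kappa^*(B)/\upsilon(M)$ again yields a pmf whose element usage vector is the constant $r(E)/|E|$, and Remark \ref{rem:constant-el-prob} combined with uniqueness of $\eta^*$ finishes the proof.

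There is no real obstacle here: the heavy lifting was already done in the equality analysis of (\ref{eq:PV-THETA}) and (\ref{eq:CV-THETA}) inside Lemma \ref{lem:5inequalities}, and the only subtle point to mention explicitly is that the normalizing constants $\tau(M)$ and $\upsilon(M)$ are strictly positive (guaranteed since $M$ is loopless with $r(M)>0$, so both the packing and covering values are positive), which makes the normalization into a pmf legitimate.
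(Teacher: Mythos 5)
Your proof is correct and takes essentially the same approach as the paper: normalize the optimal packing (resp. covering) solution by its total mass to get a pmf, use the equality case of the chain in Lemma \ref{lem:5inequalities} to see that its element usage vector is constant, and then invoke Remark \ref{rem:constant-el-prob} together with uniqueness of $\eta^*$. The only difference is cosmetic — you write the normalizer as $\tau(M)$ while the paper writes $\sum_{B}\lambda^*(B)$, which coincide at optimality — and your explicit remark that these normalizers are strictly positive is a small but welcome addition.
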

\begin{proof}
Assume that $\tau(M) = \theta(M)$. Let $\lambda^*$ be an optimal solution for the base packing problem. Consider the pmf defined as
\[\mu^* := \frac{\lambda^*}{\sum\limits_{B \in \cB} \lambda^*(B)}.\] 
Then, the inequality in (\ref{eq:PV-THETA}) holds as equality. Consequently, the pmf $\mu^*$ induces constant element usage probabilities $\eta := \cN^T\mu^*$. By Remark \ref{rem:constant-el-prob}, it follows that $\eta^* = \eta$.
		
The proof for the part 2. follows a similar approach.
\end{proof}

Next, we prove some helpful properties of deletions and contractions.

\begin{lemma}\label{lem:5sd}
Given a matroid $ M(E,\cI) $ and subsets $ X, Y \subseteq E$, the following properties hold:
\begin{enumerate}
\item If $ \cl(E - X) \neq E $, then $ S(M) \leq S(M / (E - X)) $.
\item If $ Y \neq \emptyset $, then $ D(M) \geq D(M \setminus (E - Y)) $.
\item If $ \cl(E - X) \neq E $, then $ \tau(M) \leq \tau(M / (E - X)) $.
\item If $ Y \neq \emptyset $, then $ \upsilon(M) \geq \upsilon(M \setminus (E - Y)) $.
\end{enumerate}
\end{lemma}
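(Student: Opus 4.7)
The four claims split into two easy parts that follow directly from the extremal definitions of $S$ and $D$ together with the rank formulas for minors in Propositions \ref{pro:deletion} and \ref{pro:contraction}, and two parts handled by LP duality: the base packing and base covering LPs (\ref{eq:packing-tree}) and (\ref{eq:base-covering-tree}) have easily describable duals, and I plan to extend an optimal dual solution of the minor back to the whole matroid by zero on the discarded elements.

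For part (1), I will use Proposition \ref{pro:contraction}(4) to compute, for any $Y'\subseteq X$,
\[
r_{M/(E-X)}(X)-r_{M/(E-X)}(X-Y')=r(E)-r(E-Y'),
\]
so that the minimization defining $S(M/(E-X))$ becomes the same ratio that defines $S(M)$, but restricted to subsets $Y'\subseteq X$. The hypothesis $\cl(E-X)\neq E$ is precisely what makes the feasible set nonempty and the minor of positive rank, and minimizing over a smaller set gives $S(M)\leq S(M/(E-X))$. Part (2) is even quicker: by Proposition \ref{pro:deletion}(3), $r_{M\setminus(E-Y)}(Z)=r(Z)$ for every $Z\subseteq Y$, so $D(M\setminus(E-Y))$ is the maximum of $|Z|/r(Z)$ restricted to $Z\subseteq Y$, which is bounded above by the analogous maximum over all $Z\subseteq E$, namely $D(M)$.

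For parts (3) and (4), I look at the LP dual of (\ref{eq:packing-tree}), namely $\min\{\sum_e y(e):\sum_{e\in B}y(e)\geq 1\ \forall B\in\cB(M),\ y\geq 0\}$, and the analogous dual of $\tau(M/(E-X))$ living on $X$. Taking an optimal $z^*$ for the minor's dual and extending it by zero on $E-X$, the only thing to verify is $\sum_{e\in B\cap X} z^*(e)\geq 1$ for every $B\in\cB(M)$. This reduces to showing that $B\cap X$ spans $M/(E-X)$, which Proposition \ref{pro:contraction}(4) delivers: from $B\subseteq(B\cap X)\cup(E-X)$ we get $r_{M/(E-X)}(B\cap X)\geq r(E)-r(E-X)=r_{M/(E-X)}(X)$, so $B\cap X$ contains a base $B'$ of the minor, and the dual constraint transfers, yielding $\tau(M)\leq\tau(M/(E-X))$. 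Part (4) is the mirror argument on the dual of (\ref{eq:base-covering-tree}): extending an optimal $v^*$ by zero off of $Y$, and using that $B\cap Y$ is independent in $M\setminus(E-Y)$ by Proposition \ref{pro:deletion}(1), I extend $B\cap Y$ to a base $B'$ of the restriction and conclude $\sum_{e\in B\cap Y} v^*(e)\leq\sum_{e\in B'}v^*(e)\leq 1$; feasibility then gives $\upsilon(M)\geq\upsilon(M\setminus(E-Y))$.

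The only non-routine point in the whole argument is the spanning/independence observation underlying parts (3) and (4): namely, that $B\cap X$ always spans the contraction $M/(E-X)$, and dually that $B\cap Y$ is always independent in the restriction $M\setminus(E-Y)$. Everything else is bookkeeping with the rank formulas or a one-line extend-by-zero of a dual feasible vector.
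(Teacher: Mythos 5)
Your parts (1) and (2) are essentially identical to the paper's argument: in both cases one rewrites the strength (resp.\ arboricity) of the minor as the same ratio minimized (resp.\ maximized) over a restricted collection of subsets, using the contraction and deletion rank formulas from Propositions \ref{pro:contraction}(4) and \ref{pro:deletion}(3). Your identity $r_{M/(E-X)}(X)-r_{M/(E-X)}(X-Y')=r(E)-r(E-Y')$ for $Y'\subseteq X$ is the same computation the paper does in the variable $T=E-Y'$.

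For parts (3) and (4) you take a genuinely different route, and it is a clean one. The paper works on the primal side: for (3) it builds a map $f:\cB(M)\to\cB(M/(E-X))$ with $f(B)\subseteq B\cap X$, pushes an optimal packing $\lambda^*$ on $\cB(M)$ forward to a vector $\omega$ on $\cB(M/(E-X))$, and checks directly that $\omega$ is feasible for the minor's packing LP with the same objective value. For (4) it does the analogous push-forward of an optimal cover along $B\mapsto B\cap Y$. You instead work on the dual side: extend an optimal dual solution of the minor's LP by zero to all of $E$ and verify feasibility for the big matroid's dual, then invoke LP duality once. Your feasibility check for (3) needs exactly the observation that $B\cap X$ spans $M/(E-X)$ for every $B\in\cB(M)$, and your computation $r_{M/(E-X)}(B\cap X)\ge r(B)-r(E-X)=r(E)-r(E-X)$ is correct; for (4) you need that $B\cap Y\in\cI(M\setminus(E-Y))$, which is immediate from Proposition \ref{pro:deletion}(1). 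These are precisely the facts underlying the paper's construction of $f(B)$ and $B\mapsto B\cap Y$, so the two proofs rest on the same structural lemma; the difference is bookkeeping (primal push-forward versus dual extend-by-zero). Your version is somewhat shorter since it avoids the double-sum manipulations in the paper's feasibility verifications, at the price of invoking strong LP duality explicitly (which is harmless here since both LPs are feasible and bounded). Both versions are correct.

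One small point worth spelling out if you write this up: in part (3) you should say a word about why the dual LP of $\tau(M/(E-X))$ has a finite optimum (the minor is nonempty and has positive rank precisely because $\cl(E-X)\neq E$), so that an optimal $z^*$ actually exists; the paper sidesteps this by only needing the existence of an optimal $\lambda^*$ for the bigger matroid.
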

	
\begin{proof}
We prove each part in the order given.
\begin{enumerate}
\item
We have
\begin{align*}
S(M) &= \min \left\{ \frac{|E - T|}{r(E) - r(T)} : T \subseteq E, \cl(T) \subsetneq E \right\} \\
&\leq \min \left\{ \frac{|E - T|}{r(E) - r(T)} : (E - X) \subseteq T \subseteq E, \cl(T) \subsetneq E \right\}.
\end{align*}
Since $E-T\subset X$, we can rewrite the right-hand side as
\begin{align*}
&  \min \left\{ \frac{|X| - |T \cap X|}{[r(E) - r(E - X)] - [r(T) - r(E - X)]} : (E - X) \subseteq T \subseteq E, \cl(T) \subsetneq E \right\}.\\
&= \min \left\{ \frac{|X| - |T - (E - X)|}{[r(E) - r(E - X)] - [r(T) - r(E - X)]} : (E - X) \subseteq T \subseteq E, \cl(T) \subsetneq E \right\}.
\end{align*}
By part 4 of Proposition \ref{pro:contraction}, we have
\[ r_{M / (E - X)}(X) = r_{M}(E) - r_{M}(E - X), \]
and
\[ r_{M / (E - X)}(T - (E - X)) = r_{M}(T) - r_{M}(E - X). \]

Therefore, 
\begin{align*}
	 S(M) &\leq \min \left\{ \frac{|X| - |T - (E - X)|}{r_{M / (E - X)}(X) - r_{M / (E - X)}(T - (E - X))} : (E - X) \subseteq T \subseteq E, \cl(T) \subsetneq E \right\}\\
	  &= S(M / (E - X)).
\end{align*}	
		
\item We have 
\begin{align*}
D(M) &= \max \lbr \frac{|T|}{r(T)} : T\subseteq E, r(T)>0\rbr \\
& \geq \max \lbr \frac{|T|}{r(T)} : T\subseteq Y, r(T)>0\rbr. 
\end{align*}
Thus, by part 3 of Proposition \ref{pro:deletion}, the right hand-side equals
\begin{align*}
&  \max \lbr \frac{|T|}{r_{M \setminus (E-Y)}(T)} : T\subseteq Y, r(T)>0\rbr \\
& = D(M \setminus (E-Y)).
\end{align*}

\item First, we construct a map $f$ from $ \cB(M)$ to $\cB(M/(E-X))$ such that $f(B) \subset B$ for any base $B \in \cB(M)$. Let $B \in \cB(M)$, by part 4 of Proposition \ref{pro:contraction}, we have 
\begin{align*}
	 r_{M / (E-X)}(B-(E-X)) &= r_{M}(B) - r_{M}(E-X) \\
	 & = r_{M}(E) - r_{M}(E-X) \\
	 & = r_{M / (E-X)}(X).
\end{align*}
Therefore,  there exists a base $B_0 \in \cB(M/(E-X))$ such that $B_0 \subset B-(E-X) = B \cap X \subset B$. Then, we define $f(B) := B_0$. If $B_0$ is not unique, we pick one randomly.
		
Next, let $\lambda^*$ be an optimal solution for the base packing problem (\ref{eq:packing-tree}) of $M$. We define a vector $\omega \in \R^{\cB(M/(E-X))}_{\geq 0}$ as follows:
		
\[ \omega(\varsigma) := \sum\limits_{B \in \cB(M)} \lambda^*(B)\ones_{f(B) = \varsigma}, \qquad \forall \varsigma \in \cB(M/(E-X)). \]
		
Then, we obtain \begin{align}\label{eq:sum=sum}
	\tau(M) &= \sum\limits_{B\in \cB} \lambda^*(B) &(\text{by definition of $\la^*$}) \notag\\ 
	&= \sum\limits_{\varsigma \in \cB(M/(E-X))}  \sum\limits_{B\in \cB} \lambda^*(B)\ones_{f(B) = \varsigma}& \notag\\
	&= \sum\limits_{\varsigma \in \cB(M/(E-X))} \omega(\varsigma).&
\end{align}
However, for every fixed $e \in X$:
\begin{align*}
\sum\limits_{\varsigma \in \cB(M/(E-X))} \omega(\varsigma) \ones_{e \in \varsigma} & = \sum\limits_{\varsigma \in \cB(M/(E-X))} \ones_{e \in \varsigma}\sum\limits_{B \in \cB(M)} \lambda^*(B)\ones_{f(B) = \varsigma} &\\
&  = \sum\limits_{\varsigma \in \cB(M/(E-X))}\sum\limits_{B \in \cB(M)} \lambda^*(B)\ones_{e \in \varsigma}\ones_{f(B) = \varsigma}&\\
&  = \sum\limits_{B \in \cB(M)} \sum\limits_{\varsigma \in \cB(M/(E-X))}\lambda^*(B)\ones_{e \in \varsigma}\ones_{f(B) = \varsigma}&\\
&  = \sum\limits_{B \in \cB(M)} \lambda^*(B)\sum\limits_{\varsigma \in \cB(M/(E-X))}\ones_{e \in \varsigma}\ones_{f(B) = \varsigma}&\\
& \leq \sum\limits_{B \in \cB(M)} \lambda^*(B)\ones_{e \in B} &(\text{since $f(B)\subset B$})\\
&\leq 1. &(\text{by definition of $\la^*$})
\end{align*}
Consequently, the vector $\omega$ satisfies all constraints of the base packing problem of $M/(E-X)$. By (\ref{eq:sum=sum}), we obtain that $\tau(M)	\leq \tau(M/(E-X))$.
		
\item Let $\kappa^*$ be an optimal solution for the base covering problem  (\ref{eq:base-covering-tree}) of $M$. For any $B \in \cB(M)$, by part 1 of Proposition \ref{pro:deletion}, we have that $B \cap Y$ is an independent set  in the matroid $M \setminus (E-Y)$. Then, for each $I \in \cI(M \setminus (E-Y))$, we define
\[ \nu(I) := \sum\limits_{B \in \cB(M)}  \kappa^*(B) \ones_{B \cap Y = I}.  \]
		
Then,\begin{align}\label{eq:sum=sum2}
\upsilon(M) &= \sum\limits_{B\in \cB(M)} \kappa^*(B) \notag\\
&= \sum\limits_{I \in \cI(M \setminus (E-Y))} \sum\limits_{B \in \cB(M)}  \kappa^*(B) \ones_{B \cap Y = I} \notag \\
& = \sum\limits_{I \in \cI(M \setminus (E-Y))} \nu(I).
\end{align}  
Moreover, we have that, for each $e\in Y$
\begin{align*}
\sum\limits_{ I \in \cI(M \setminus (E-Y))} \nu(I)\ones_{e \in I} & = \sum\limits_{I \in \cI(M \setminus (E-Y))} \ones_{e \in I}\sum\limits_{B \in \cB(M)}  \kappa^*(B) \ones_{B \cap Y = I}&\\
& = \sum\limits_{I \in \cI(M \setminus (E-Y))} \sum\limits_{B \in \cB(M)}  \kappa^*(B) \ones_{B \cap Y = I}\ones_{e \in I}&\\
& =\sum\limits_{B \in \cB(M)}\kappa^*(B)\sum\limits_{I \in \cI(M \setminus (E-Y))}   \ones_{B \cap Y = I}\ones_{e \in I}&\\
&=\sum\limits_{ B \in \cB} \kappa^*(B)\ones_{e\in B \cap Y}&\\
& = \sum\limits_{B \in \cB} \kappa^*(B)\ones_{e \in B}&\\
&\geq 1. &(\text{by definition of $\kappa^*$})
\end{align*}
Therefore, vector $\nu$ satisfies all constraints of the covering problem (\ref{eq:base-covering-tree}) of the matroid $M \setminus (E-Y)$. By (\ref{eq:sum=sum2}), we conclude that $\upsilon(M) \geq \upsilon(M \setminus (E-Y)).$
\end{enumerate}
\end{proof}
\begin{proof}[Proof for Theorem \ref{nash}]
Let $ E_{\min} $ and $ E_{\max} $ be defined as in (\ref{eq:min}) and (\ref{eq:max}). We have
\begin{align*}
	D(M) &= D(M \setminus (E - E_{\min})) &(\text{by part 4, part 5 of Theorem \ref{thm:min} and Corollary \ref{coro:homogeneous}}) \\
&	= \theta(M \setminus (E - E_{\min}))  &(\text{by part 5 of Theorem \ref{thm:min} and Corollary \ref{coro:homogeneous}} )\\
&= \upsilon(M \setminus (E - E_{\min})) &(\text{by Remark \ref{re:homo}}) \\
&\leq \upsilon(M) &(\text{by part 4 of Lemma \ref{lem:5sd}} )\\
&\leq D(M), &(\text{by (\ref{eq:longone})})
\end{align*}
and

\begin{align*}
	S(M) &= S(M / (E - E_{\max})) &(\text{by part 4, part 5 of Theorem \ref{thm:max} and Corollary \ref{coro:homogeneous}}) \\
	&=\theta(M / (E - E_{\max}))  &(\text{by part 5 of Theorem \ref{thm:max} and Corollary \ref{coro:homogeneous}} )\\
	&= \tau(M / (E - E_{\max})) &(\text{by Remark \ref{re:homo}}) \\
	&\geq \tau(M)  &(\text{by part 3 Lemma \ref{lem:5sd}} )\\
	&\geq S(M). &(\text{by (\ref{eq:longone})})
\end{align*}
Therefore, $\upsilon(M) = D(M) $ and $ \tau(M) = S(M) $.
\end{proof}
	
\section{Fulkerson duality for the base family}\label{sec:Fulkerson}
Given a matroid $ M(E,\cI) $, let $ \cB $ be the base family of $ M $. In this section, we introduce the Fulkerson blocker family and a Fulkerson dual family for the family $ \cB $.
	
For a matroid $ M(E,\cI) $, a set $ X \subseteq E $ is called a {\it separator} of $ M $ if any circuit $ C \in \cC(M) $ is contained in either $ X $ or $ E - X $. A matroid $ M(E,\cI) $ is called {\it connected} if it has no separators other than $ E $ and $ \emptyset $.
\begin{definition}\label{def:phi-theta}
Let $M$ be a matroid on $E$ with rank function $r$. Let $ \Phi $ be the family of all nonempty complement-closed sets $ X \subseteq E $ with usage vectors:
\begin{equation}\label{usage2}
\widetilde{\cN}(X,\cdot)^T = \frac{1}{r(E) - r(E - X)}\ones_{X}.
\end{equation}
Also, let $\Theta\subset\Phi$ be the family of all nonempty complement-closed sets $X \subseteq E$ with usage vectors as in (\ref{usage2})
for which the matroid $M / (E-X)$ is connected. 
\end{definition}
	
\begin{theorem}\label{theo:fulkerson}
Let $M = (E,\cI)$ be a loopless matroid with $r(M)>0$. Let $\cB$ be the base family of $M$ and let $\Theta$ be as in Definition \ref{def:phi-theta}. Then, the Fulkerson blocker family of $\cB$ is $\widehat{\cB} = \Theta$.
\end{theorem}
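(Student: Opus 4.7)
The plan is to prove the two inclusions $\Theta\subseteq\widehat{\cB}$ and $\widehat{\cB}\subseteq\Theta$ separately. Throughout, for $X\subseteq E$ with $r(E-X)<r(E)$, I will write $\eta_X:=\ones_X/(r(E)-r(E-X))$ for the candidate vector from (\ref{usage2}).

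For $\Theta\subseteq\widehat{\cB}$, fix $X\in\Theta$. Admissibility of $\eta_X$ for $\cB$ is immediate from Lemma \ref{lem:BcapX}. For extremality, suppose $\eta_X=(\rho_1+\rho_2)/2$ with $\rho_1,\rho_2\in\Adm(\cB)$; nonnegativity forces each $\rho_i$ to be supported on $X$. For bases $B$ attaining equality in Lemma \ref{lem:BcapX}, i.e.\ $B\in\cB^X$, admissibility must be tight for both $\rho_i$; using Theorem \ref{thm:baseserial} to identify $\{B\cap X:B\in\cB^X\}$ with $\cB(M/(E-X))$ gives $\rho_i(B')=1$ for every basis $B'$ of $M/(E-X)$. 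Since $M/(E-X)$ is connected, any $e,f\in X$ lie in a common circuit $C$; extending $C-\{f\}$ to a basis $B'$ of $M/(E-X)$ realizes $C$ as the fundamental circuit $C(f,B')$, and Proposition \ref{pro:basic}(3) gives that $B'-\{e\}+\{f\}$ is also a basis, so tightness on both forces $\rho_i(e)=\rho_i(f)$. Thus $\rho_i$ is constant on $X$; together with $\rho_i(B')=1$ and $|B'|=r(E)-r(E-X)$, this yields $\rho_i=\eta_X$.

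For $\widehat{\cB}\subseteq\Theta$, take an extreme point $\eta$ of $\Adm(\cB)$ and let $X$ be its support. The first step is a reduction to the case $X=E$: Proposition \ref{pro:contraction}(4) gives $r_{M/(E-X)}(B\cap X)=r(E)-r(E-X)$ for every base $B$, so $B\cap X$ contains a basis of $M/(E-X)$, and consequently any element of $\Adm(\cB(M/(E-X)))$ extended by zero off $X$ lies in $\Adm(\cB)$. Any convex decomposition of $\eta|_X$ therefore pushes up to one of $\eta$, making $\eta|_X$ extreme in $\Adm(\cB(M/(E-X)))$ with full support. So assume $X=E$ and $\eta>0$. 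Then extremality forces the matrix with rows $\{\ones_B:B\in T\}$ (the tight bases) to have linear rank $|E|$, so the system $\ones_B^T\rho=1$ over $B\in T$ has a unique solution; since $\ones_E/r(E)$ also solves it (every base satisfies $|B|=r(E)$), I conclude $\eta=\ones_E/r(E)$. If $M$ had a nontrivial separator $S$, then $r(E)=r(S)+r(E-S)$ with $\ones_S/r(S),\ones_{E-S}/r(E-S)\in\Adm(\cB)$, and the identity
\[\frac{\ones_E}{r(E)}=\frac{r(S)}{r(E)}\cdot\frac{\ones_S}{r(S)}+\frac{r(E-S)}{r(E)}\cdot\frac{\ones_{E-S}}{r(E-S)}\]
would contradict extremality, forcing $M$ connected. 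Transferring back to the original matroid, $\eta=\eta_X$ with $M/(E-X)$ connected; finally, $X$ must be complement-closed, because if $x\in X\cap\cl(E-X)$ then $r(E-(X-\{x\}))=r(E-X)$, so $\eta_{X-\{x\}}$ shares the denominator of $\eta_X$ and is admissible by Lemma \ref{lem:BcapX}, and writing $\eta_X=\tfrac{1}{2}\bigl(\eta_{X-\{x\}}+(\eta_X+(r(E)-r(E-X))^{-1}\ones_{\{x\}})\bigr)$ again contradicts extremality.

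The main obstacle I expect is the rank statement underlying both directions: that the indicator vectors $\{\ones_B\}_{B\in\cB(M')}$ of a connected loopless matroid $M'$ with positive rank linearly span $\R^{X}$. Both the tight-system uniqueness in Part 2 and the ``$\rho_i$ is constant'' step in Part 1 reduce to this fact, and the essential ingredient is the circuit-based characterization of matroid connectedness combined with the fundamental-circuit exchange from Proposition \ref{pro:basic}(2)--(3), which allows any two elements of $X$ to be swapped between a matched pair of tight bases.
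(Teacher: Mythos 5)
Your proof is correct. Part 1 ($\Theta\subseteq\widehat{\cB}$) is essentially the paper's Lemma \ref{lem:cl2}: tightness on $\cB^X$, concatenation via Theorem \ref{thm:baseserial}, and an exchange argument from connectedness (you rederive inline what the paper isolates as Lemma \ref{lem:twobases}).

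Part 2 ($\widehat{\cB}\subseteq\Theta$) takes a genuinely different route. The paper splits this into Lemma \ref{lem:cl1} ($\widehat{\cB}\subset\Phi$) and Lemma \ref{lem:cl3} ($\Phi\cap\widehat{\cB}\subseteq\Theta$), and the first of these is handled by a weighted-strength argument (Lemma \ref{lem:strength-mod}, via parallel extensions, rationality, continuity) together with a citation to the proof of a claim in \cite{huyfulkerson}. You instead observe that for an extreme point $\eta$ with support $X$, the restriction $\eta|_X$ is extreme in $\Adm(\cB(M/(E-X)))$ — a clean reduction that needs only Lemma \ref{lem:BcapX} and Proposition \ref{pro:contraction}(4) — and then invoke the vertex characterization of polyhedra: the tight base constraints span $\R^X$, so the tight system has a unique solution, which must be the uniform vector since $|B'|=r(M/(E-X))$ for every $B'$. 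Connectedness follows from the usual separator decomposition and complement-closedness from a perturbation argument. This is more self-contained and elementary than the paper's Lemma \ref{lem:cl1} route, which relies on an external reference; what the paper's approach buys is a statement of independent interest ($S_\sigma(M)=\Mod_{1,\sigma}(\cB)$) and a more modulus-flavored narrative. One small redundancy in yours: since $M/(E-X)$ is shown to be connected with positive rank, it is automatically loopless, which is exactly the statement that $X$ is complement-closed, so your final perturbation step repeats a conclusion already available (this is the content of the paper's remark following Lemma \ref{lem:cl2}). Also, your rank argument silently uses that $M/(E-X)$ has no loops (else the tight base rows would have a zero column), which is fine since otherwise $\eta|_X$ could not have full support and be extreme, but it is worth making explicit.
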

\begin{remark}By Proposition \ref{theo:smallestF}, the Fulkerson blocker family $\widehat{\cB}$ gives a  minimal inequality description $\Adm(\widehat{\cB})$ of the base dominant $\Dom(\cB)$. This is a common theme in the study of various convex polytopes.
\end{remark}
\begin{proof}[Proof of Theorem \ref{theo:fulkerson}]
The proof follows from three lemmas shown below.
By  Lemma \ref{lem:cl1}, $\widehat{\cB} \subset \Phi$. By Lemma \ref{lem:cl2}, $\Theta \subset \widehat{\cB}$. And by Lemma \ref{lem:cl3}, $ \Phi \cap \widehat{\cB} \subseteq \Theta $.
\end{proof}
\begin{lemma} \label{lem:cl1}
	We have  $\widehat{\cB} \subset \Phi$.
\end{lemma}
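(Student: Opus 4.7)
The plan is to avoid any direct extreme-point analysis and instead invoke Proposition~\ref{theo:smallestF}: since $\widehat{\cB}$ is contained in every Fulkerson dual family of $\cB$, it suffices to prove that $\Phi$ itself is a Fulkerson dual family of $\cB$, i.e., $\Adm(\Phi)=\BL(\Adm(\cB))=\Dom(\cB)$.

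First I would translate the definition of $\Phi$ into a list of inequalities on densities: a vector $\rho\in\R^E_{\geq 0}$ lies in $\Adm(\Phi)$ if and only if $\widetilde{\cN}(X,\cdot)\rho = \rho(X)/(r(E)-r(E-X))\geq 1$ for every nonempty complement-closed $X\subseteq E$, equivalently $\rho(X)\geq r(E)-r(E-X)$ for every such $X$. From (\ref{eq:edmonsnash}), $\Dom(\cB)$ is already cut out by inequalities of this same form $\rho(X)\geq r(E)-r(E-X)$, indexed by a possibly larger family of subsets of $E$. Thus the task reduces to showing that the complement-closed inequalities alone already suffice to define $\Dom(\cB)$.

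The main step is a closure-reduction. Given any $X\subseteq E$ with $r(E-X)<r(E)$, set $X':=E-\cl(E-X)$. Then: (i) $X'$ is nonempty and complement-closed, because $\cl(E-X)\neq E$ (otherwise $r(E-X)=r(E)$, contrary to assumption) and $\cl(\cl(E-X))=\cl(E-X)$; (ii) $X'\subseteq X$, because $\cl(E-X)\supseteq E-X$; and (iii) $r(E-X')=r(\cl(E-X))=r(E-X)$, since adding to $E-X$ the elements of its closure does not change the rank. Because $\rho\geq 0$ forces $\rho(X')\leq\rho(X)$, the complement-closed inequality attached to $X'$ implies the one attached to $X$ (they share the same right-hand side). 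Hence the complement-closed constraints already determine $\Dom(\cB)$, yielding $\Adm(\Phi)=\Dom(\cB)$, and Proposition~\ref{theo:smallestF} delivers $\widehat{\cB}\subseteq\Phi$.

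The only technical point I expect will need care is making the closure-reduction fully rigorous: one must verify the rank-invariance of closure and that nonnegativity of $\rho$ preserves monotonicity on subsets. Once those facts are in hand, no extreme-point analysis of $\Adm(\cB)$ is needed, and the inclusion is a one-line invocation of the smallest-Fulkerson-dual-family result.
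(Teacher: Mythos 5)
Your route is genuinely different from the paper's, and modulo two small points it is sound. The paper proves Lemma~\ref{lem:cl1} by first showing $S_{\si}(M)=\tau_{\si}(M)=\Mod_{1,\si}(\cB)$ for all positive weights $\si$ (Lemma~\ref{lem:strength-mod}, via parallel extensions, homogeneity, and continuity) and then invoking the proof of Claim~4.1 in \cite{huyfulkerson}; the mechanism there is that extreme points of $\Adm(\cB)$ arise as weighted $\Mod_1$-optimizers, whose structure is governed by the weighted strength problem. You sidestep that weighted-modulus machinery entirely by showing that $\Phi$ is itself a Fulkerson dual family of $\cB$ and then applying Proposition~\ref{theo:smallestF}, which is cleaner and shorter. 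Your closure-reduction $X\mapsto E-\cl(E-X)$ is exactly the right device for showing that the complement-closed inequalities imply all the others.

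Two caveats to make this airtight. First, a Fulkerson dual family requires the full equality $\Adm(\Phi)=\Dom(\cB)$, so you need both inclusions: the closure-reduction gives $\Adm(\Phi)\subseteq\Dom(\cB)$, and you should also record the easy reverse inclusion $\Dom(\cB)\subseteq\Adm(\Phi)$, which follows from Lemma~\ref{lem:BcapX} (or Remark~\ref{rem:base-admissibility}) since every vector in $\Dom(\cB)$ dominates a convex combination of bases and hence satisfies $\rho(X)\geq r(E)-r(E-X)$ for \emph{every} $X$, not just complement-closed ones. Second, be careful with how you read (\ref{eq:edmonsnash}): as printed it ranges over ``closed $X$,'' which is neither a superset nor a subset of the complement-closed sets (for a $3$-element circuit the closed sets are $\emptyset$, the singletons, and $E$, while the complement-closed sets are $\emptyset$, the pairs, and $E$), so ``indexed by a possibly larger family'' is not quite right as stated. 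The intended statement, consistent with Edmonds's base-packing theorem and with Definition~\ref{def:phi-theta}, is that $X$ ranges over sets with $E-X$ closed, or equivalently over all $X$ with $r(E-X)<r(E)$, which your reduction collapses to the complement-closed case anyway. With that reading fixed and the reverse inclusion noted, the proof goes through; the trade-off relative to the paper's argument is that you quote (\ref{eq:edmonsnash}) from the literature, whereas the paper is arranged so that the Fulkerson description emerges from the self-contained strength/modulus development in Sections~\ref{sec:strength}--\ref{sec:basepacking}.
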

To prove Lemma \ref{lem:cl1}, we first recall the connection between the strength $S(M)$, the base packing value $\tau(M)$, and $\Mod_1(\cB)$. Note that, we have $S(M) = \tau(M) = \Mod_{1}(\cB)$ since the base packing problem is the dual problem of $\Mod_{1}(\cB)$.
For arbitrary weights $\si \in \mathbb{R}_{>0}^{E}$, we define the weighted strength  of $M$ with weights $\si$:
\begin{equation}\label{eq:weighted-strength-problem}
	S_{\si}(M) := \min \left\{ \frac{\si(X)}{r(E) - r(E-X)} : X \subseteq E, r(E) > r(E-X) \right\}.
\end{equation}
 We also define the weighted base packing problem:
 
 \begin{equation}\label{eq:weighted-packing-tree}
 	{\renewcommand{\arraystretch}{1.8}
 		\begin{array}{ll}
 			\underset{\lambda \in \R^{\cB}_{\geq 0}}{\rm maximize} & \sum\limits_{B \in \cB} \lambda(B) \\
 			\text{subject to} & \sum\limits_{B \in \cB : e \in B}\lambda(B) \leq \si(e), \qquad \forall e \in E.
 		\end{array}
 	}
 \end{equation}
 This is the dual problem of $\Mod_{1,\si}(\cB)$ and we denote its optimal value by $\tau_{\si}(M)$.
 
 \begin{lemma}\label{lem:strength-mod}
 	We have that $S_{\si}(M) =\tau_{\si}(M) = \Mod_{1,\si}(\cB)$ for any $\si \in \mathbb{R}_{>0}^{E}.$
 \end{lemma}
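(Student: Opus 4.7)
The plan is to establish the chain $S_\si(M) = \tau_\si(M) = \Mod_{1,\si}(\cB)$, splitting the second equality off as standard LP duality and attacking the first equality by reduction to the unweighted Theorem \ref{nash}. Writing the program $\Mod_{1,\si}(\cB)$ as $\min\{\si^T \rho : \sum_{e \in B}\rho(e) \geq 1\ \forall B \in \cB,\ \rho \geq 0\}$ and transposing shows its LP dual is exactly the weighted base packing program (\ref{eq:weighted-packing-tree}); both are feasible (with $\rho \equiv 1$ admissible and $\lambda \equiv 0$ feasible) and bounded, so strong LP duality gives $\tau_\si(M) = \Mod_{1,\si}(\cB)$.

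The inequality $\tau_\si(M) \leq S_\si(M)$ follows directly from Lemma \ref{lem:BcapX}. For any $X \subseteq E$ with $r(E) > r(E - X)$ and any $\lambda$ feasible for (\ref{eq:weighted-packing-tree}), the lemma yields $|B \cap X| \geq r(E) - r(E - X)$ for every base $B$; summing against $\lambda(B)$ and swapping sums gives $(r(E) - r(E - X))\sum_{B \in \cB}\lambda(B) \leq \sum_{e \in X}\sum_{B \ni e}\lambda(B) \leq \si(X)$, and maximizing $\lambda$ then minimizing over $X$ produces the inequality.

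For the reverse $S_\si(M) \leq \tau_\si(M)$ I will pass to a parallel extension of $M$. Both $\si \mapsto \tau_\si(M)$ (a parametric LP value in its RHS data) and $\si \mapsto S_\si(M)$ (a finite minimum of linear functionals of $\si$) are continuous and positively homogeneous of degree one on $\R^E_{>0}$, so by density of $\Q^E_{>0}$ in $\R^E_{>0}$ and clearing of denominators it suffices to treat integer-valued $\si$. For such $\si$, I form the matroid $M'$ on $E' := \bigsqcup_{e \in E}\{(e, 1), \ldots, (e, \si(e))\}$ with rank function $r_{M'}(X') := r_M(\pi(X'))$, where $\pi(X') := \{e : (e, i) \in X' \text{ for some } i\}$; bases of $M'$ correspond to pairs $(B, c)$ with $B \in \cB(M)$ and $c$ a copy-selector over $B$. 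A lifting/projecting argument on feasible packings — distribute each $\lambda(B)$ uniformly over its $\prod_{e \in B}\si(e)$ lifts in one direction, and sum along shadow fibers in the other — yields $\tau(M') = \tau_\si(M)$. Writing $X' \subseteq E'$ through its profile $k_e := |\{i : (e, i) \in X'\}| \in \{0, \ldots, \si(e)\}$, the identity $r_{M'}(E' - X') = r_M(\{e : k_e < \si(e)\})$ shows that lowering any $k_e \in (0, \si(e))$ to $0$ strictly decreases the numerator $\sum_e k_e$ while fixing the denominator, so the minimum in $S(M')$ is attained at profiles with $k_e \in \{0, \si(e)\}$; these biject with subsets $X = \{e : k_e = \si(e)\} \subseteq E$ of matching ratio $\si(X)/(r(E) - r(E - X))$, giving $S(M') = S_\si(M)$. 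Applying Theorem \ref{nash} to $M'$ then delivers $\tau(M') = S(M')$, hence $\tau_\si(M) = S_\si(M)$ in the integer case, and continuity extends this to all $\si \in \R^E_{>0}$.

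The main obstacle is the identification $S(M') = S_\si(M)$: ruling out fractional profiles as minimizers requires a feasibility check for the "shift $k_e$ to $0$" move. One settles it by observing that any $X'$ whose support is concentrated on copies of a single element with $0 < k_e < \si(e)$ is itself infeasible — its complement has full shadow, hence full rank — so whenever the original $X'$ is feasible the shift never removes its last element.
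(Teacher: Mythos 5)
Your proposal is correct and follows essentially the same route as the paper: reduce to integer weights via a parallel extension of $M$, apply the unweighted Theorem \ref{nash} there, and pass back to arbitrary $\si\in\R^E_{>0}$ by positive homogeneity plus continuity. The only difference is one of detail — the paper constructs $M_\si$ and simply asserts $S_\si(M)=S(M_\si)$ and $\tau_\si(M)=\tau(M_\si)$ without proof, whereas you actually supply the lifting/projecting argument for the packing value and the profile-reduction argument for the strength (and your worry at the end is unnecessary since lowering an interior $k_e$ to $0$ leaves $r_{M'}(E'-X')$ unchanged, so feasibility is automatic); you also add a redundant but harmless direct verification of $\tau_\si(M)\le S_\si(M)$ via Lemma \ref{lem:BcapX}.
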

 \begin{proof}
 Let $\si \in \mathbb{Z}_{>0}^{E}.$ We assign each element $e$ with weight $\si(e)$. For each $e \in E(M)$, let $X_e = \lbr e_1,e_2,\dots, e_{\si(e)} \rbr$ be a set such that $X_e \cap X_{e'} = \emptyset,$ for all $ e, e' \in E(M)$ with $e \neq e'$. We can think of $X_e$ as the set of $\si(e)$ copies of $e$. The $\si$-parallel extension $M_{\si} $ of $M$ is obtained by replacing each element $e \in E(M)$ by $X_e$. Specifically, the ground set $E(M_{\si})$ of $M_{\si}$ is $\bigcup_{e \in E(M)}X_e$. A subset $Y \in E(M_{\si})$ is independent in $M_{\si}$ if and only if $\forall e \in E(M), |X_e \cap Y| \leq 1$ and the set $\lbr e \in E(M): X_e \cap Y \neq \emptyset \rbr$ is independent in $M$. If every element $e$ is assigned a constant weight $r$, we write $M_r$ for $M_{\si \equiv r}$ and call $M_r$ the $r$-parallel extension of $M$. Let $E' = \lbr e_1: e\in E(M)\rbr \subseteq E(M_{\si}).$ Consequently, there is a matroid isomorphism between $M$ and $M_{\si}|E'$ with the bijection $e \leftrightarrow e_1$ between $E(M)$ and $E'$. So, we shall see $M$ as the restriction $M_{\si}|E'$ of $M_{\si}$. Then, we have that $S_{\si}(M) = S(M_{\si})$ and $\tau_{\si}(M) = \tau(M_{\si})$.
Therefore, we establish that $S_{\si}(M) =\tau_{\si}(M) = \Mod_{1,\si}(\cB(M))$ for $\si \in \mathbb{Z}_{>0}^{E}.$ Note that $S_{\si}(M)$ and $\Mod_{1,\si}(\cB(M)) $ are homogeneous with respect to $\si$, meaning that $S_{r\si}(M) = rS_{\si}(M)$ and $\Mod_{1,r\si}(\cB(M)) = r\Mod_{1,\si}(\cB(M))$ for any positive number $r$. Therefore, $S_{\si}(M) =\tau_{\si}(M) = \Mod_{1,\si}(\cB(M))$ for $\si \in \mathbb{Q}_{>0}^{E}.$ And by continuity,  $S_{\si}(M) =\tau_{\si}(M) = \Mod_{1,\si}(\cB(M))$ for $\si \in \mathbb{R}_{>0}^{E}.$
\end{proof}
Using Lemma \ref{lem:strength-mod} and the Proof of Claim 4.1 in \cite{huyfulkerson}, we obtain Lemma \ref{lem:cl1}. Next, we consider the following well-known proposition for connected matroids.
\begin{proposition}\label{pro:connected}
A matroid $M$ is connected if and only if every two elements of $M$ lie in a common circuit of $M$.
\end{proposition}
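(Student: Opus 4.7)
The plan is to characterize connectedness via a natural equivalence relation on the ground set and then match it against the separator-free condition. The easy direction is almost immediate: suppose every two elements lie in a common circuit, and assume for contradiction that $X \subseteq E$ is a separator with $\emptyset \subsetneq X \subsetneq E$. Picking $e \in X$ and $f \in E - X$ and a circuit $C$ containing both, the separator property forces $C \subseteq X$ or $C \subseteq E - X$, contradicting $e \in C \cap X$ and $f \in C \cap (E - X)$.

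For the converse, I would define a relation $\sim$ on $E$ by declaring $e \sim f$ whenever $e = f$ or there exists a circuit $C \in \cC(M)$ with $\{e, f\} \subseteq C$. Reflexivity and symmetry are obvious. The core content is transitivity: given distinct $e, f, g$ together with circuits $C_1 \ni e, f$ and $C_2 \ni f, g$, one must produce a circuit containing both $e$ and $g$. If $e \in C_2$ or $g \in C_1$ this is immediate, so the interesting case is $e \in C_1 - C_2$ and $g \in C_2 - C_1$. I plan to apply the strong circuit elimination axiom to the pair $C_1, C_2$ at the common element $f$, which produces a circuit $C_3 \subseteq (C_1 \cup C_2) - f$ with $e \in C_3$. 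If $g \in C_3$ we are done; otherwise I would iterate by applying strong circuit elimination to $C_3$ and $C_2$ at a suitably chosen common element, using a descent argument on $|C_1 \cup C_2|$ (or on the number of elements of $C_2$ not in the current circuit containing $e$) to guarantee termination with a circuit that contains both $e$ and $g$.

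Once $\sim$ is established as an equivalence relation, the final step is short. Let $E_1, \dots, E_k$ be the equivalence classes. For any circuit $C$, fix $e \in C$ and note that every other $f \in C$ satisfies $f \sim e$, so $C$ lies inside the class of $e$. Hence each $E_i$ is a separator in the sense of Definition at the start of Section \ref{sec:Fulkerson}. Connectedness of $M$ forces $k = 1$, so any two elements of $E$ are $\sim$-related, which is exactly the desired conclusion.

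The main obstacle is the transitivity step, specifically pushing the single application of strong circuit elimination into an iterative procedure that retains $e$ throughout while eventually absorbing $g$. Everything else is bookkeeping: the easy direction is one line, and the passage from an equivalence relation on $E$ to a decomposition into separators is a direct consequence of the fact that circuits are entirely contained within a class.
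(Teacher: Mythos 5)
The paper does not prove Proposition \ref{pro:connected}; it is stated as a well-known fact and immediately used, so there is no in-paper proof to compare against. Your easy direction (circuit condition implies connected) is correct, and the framing of the converse --- show $\sim$ is an equivalence relation, observe that each equivalence class is a separator because every circuit lies inside a single class, and use connectedness to force a single class --- is the standard approach and is sound. The genuine gap is exactly the one you flag: the transitivity step, where the descent you sketch does not terminate as stated. After strong circuit elimination at $f$ (keeping $e$) yields $C_3$ with $e \in C_3 \subseteq (C_1 \cup C_2) - f$ and $g \notin C_3$, replacing $(C_1,C_2)$ by $(C_3,C_2)$ need not decrease $|C_1 \cup C_2|$: one has $C_3 \cup C_2 = C_1 \cup C_2$ precisely when $C_1 - C_2 \subseteq C_3$, and nothing in the construction rules that out. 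The alternative quantity you mention, $|C_2 - C_3|$, is also not monotone, since $f$ newly enters $C_2 - C_3$ and may offset whatever elements of $C_2 - C_1$ were absorbed into $C_3$. So the iteration can stall.

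The missing idea is to apply strong elimination a second time and then descend along a different pair. Let $(C_1, C_2)$ be a counterexample with $|C_1 \cup C_2|$ minimal. Since $C_3$ is not a proper subset of the circuit $C_1$ (as $f \in C_1 - C_3$), there is $h \in C_3 \cap (C_2 - C_1)$. Apply strong elimination to $C_2$ and $C_3$ at $h$, keeping $g \in C_2 - C_3$, to get a circuit $C_4$ with $g \in C_4 \subseteq (C_2 \cup C_3) - h \subseteq (C_1 \cup C_2) - h$ and, by hypothesis, $e \notin C_4$. Since $C_4$ is not a proper subset of $C_2$ (as $h \in C_2 - C_4$), there is $v \in C_4 \cap (C_3 - C_2) \subseteq C_1 - C_2$. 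The pair $(C_1, C_4)$ then satisfies $e \in C_1 - C_4$, $g \in C_4 - C_1$, and $v \in C_1 \cap C_4 \neq \emptyset$; moreover $h \in C_2 - C_1$ lies in $C_1 \cup C_2$ but not in $C_1 \cup C_4$ (since $h \notin C_1$ and $h \notin C_4$), so $|C_1 \cup C_4| < |C_1 \cup C_2|$. Minimality then produces a circuit through $e$ and $g$ inside $C_1 \cup C_4 \subseteq C_1 \cup C_2$, a contradiction. In short, the descent is real, but it must be run on $(C_1, C_4)$, not on $(C_3, C_2)$, and the strict decrease is witnessed by $h$ rather than by $f$.
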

Using Proposition \ref{pro:connected}, we derive the following lemma.
\begin{lemma}\label{lem:twobases}
Let $M(E,\cI)$ be a connected matroid. Then for any two elements $e_1$ and $e_2$ of $M$ there exist two bases $B_1$ and $B_2$ such that \[B_1 -B_2 = \lbr e_1 \rbr \quad \text{ and } \quad B_2 -B_1 = \lbr e_2 \rbr.\]
\end{lemma}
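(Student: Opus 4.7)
The plan is to use Proposition \ref{pro:connected} to produce a circuit containing both $e_1$ and $e_2$, extend part of it to a base, and then perform the classical basis exchange from Proposition \ref{pro:basic}.

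First, I would assume $e_1\neq e_2$ (otherwise the required disjointness $B_1-B_2=\{e_1\}$ and $B_2-B_1=\{e_2\}$ fails trivially). Since $M$ is connected, Proposition \ref{pro:connected} supplies a circuit $C \in \cC(M)$ with $\{e_1,e_2\}\subseteq C$. Because $C$ is a minimal dependent set, removing any element of $C$ yields an independent set; in particular $C-\{e_2\}$ is independent, and it still contains $e_1$.

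Next, I would extend the independent set $C-\{e_2\}$ to a base $B_1$ of $M$ (which is possible by the hereditary property together with repeated use of the exchange property (I3) in Definition \ref{def:independent-set}, or equivalently by Proposition \ref{pro:basic} part~7). By construction $e_1 \in B_1$; moreover $e_2 \notin B_1$, since otherwise $C\subseteq B_1$, contradicting independence of the base $B_1$. Now consider $B_1 \cup \{e_2\}$. By Proposition \ref{pro:basic} part~2, there is a unique circuit $C(e_2,B_1)\subseteq B_1\cup\{e_2\}$ containing $e_2$. But $C \subseteq (C-\{e_2\})\cup\{e_2\}\subseteq B_1\cup\{e_2\}$, and $C$ is a circuit through $e_2$, so by uniqueness $C(e_2,B_1)=C$. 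In particular $e_1 \in C(e_2,B_1)$.

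Finally, Proposition \ref{pro:basic} part~3 then guarantees that $B_2 := (B_1 - \{e_1\})\cup \{e_2\}$ is a base, and by construction $B_1 - B_2 = \{e_1\}$ and $B_2 - B_1 = \{e_2\}$, as required. There is no serious obstacle here: the only subtlety is ensuring that the circuit $C$ provided by connectedness is actually the fundamental circuit of $e_2$ with respect to $B_1$, which is why one extends $C-\{e_2\}$ (rather than an arbitrary independent subset containing $e_1$) to the base $B_1$.
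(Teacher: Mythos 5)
Your proof is correct and follows essentially the same route as the paper's: invoke Proposition \ref{pro:connected} to obtain a common circuit $C$ through $e_1,e_2$, extend the independent set $C-\{e_2\}$ to a base $B_1$, and then apply part~3 of Proposition \ref{pro:basic} to swap $e_1$ for $e_2$. You merely spell out the details the paper leaves implicit (that $e_2\notin B_1$, and that $C$ is the fundamental circuit $C(e_2,B_1)$, which justifies the exchange), which is a welcome clarification.
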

\begin{proof}
Let $C$ be a circuit that contains $e_1$ and $e_2$, then $C -\lbr e_2 \rbr$ is a independent set. There exists a base $B_1$ such that $(C -\lbr e_2 \rbr) \in B_1$. By part 3 of  Proposition \ref{pro:basic}, the set $B_2 := B_1 - \lbr e_1 \rbr \cup \lbr e_2 \rbr$ is a base of $M$.
\end{proof}
\begin{remark}\label{rem:base-admissibility}
Note that, by Lemma \ref{lem:BcapX}, for any $X\subset E$ such that  $r(E) - r(E - X)>0$, the vector in (\ref{usage2}) is necessarily admissible for $\cB$. Indeed,  if $B\in\cB$,
\[
\widetilde{\cN}(X,\cdot)\ones_B=\frac{1}{r(E) - r(E - X)}\ones_{X}^T \ones_B=\frac{|B\cap X|}{r(E) - r(E - X)}\ge 1.
\] 
\end{remark}
\begin{lemma}\label{lem:cl2}
We have  $\Theta \subset \widehat{\cB}$.
\end{lemma}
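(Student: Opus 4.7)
The plan is to fix $X\in\Theta$, set $c:=1/(r(E)-r(E-X))$ and $\widetilde{\rho}:=c\ones_X$, and show directly that $\widetilde{\rho}$ is an extreme point of $\Adm(\cB)$. By Remark \ref{rem:base-admissibility}, $\widetilde{\rho}\in\Adm(\cB)$, so it suffices to write $\widetilde{\rho}=(u+v)/2$ with $u,v\in\Adm(\cB)$ and deduce $u=v=\widetilde{\rho}$. Note also that, because $X$ is complement-closed and nonempty, no element of $X$ lies in $\cl(E-X)$, so by part 4 of Proposition \ref{pro:contraction} the contracted matroid $M/(E-X)$ is loopless on the ground set $X$ with rank $r(E)-r(E-X)=1/c$.

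First I would look at the support. Since $u+v=2c\ones_X$ and $u,v\geq 0$, we immediately get $u(e)=v(e)=0$ for every $e\in E-X$. Thus the nontrivial content lives on $X$, and we write $u'=u|_X$, $v'=v|_X$, which still satisfy $u'+v'=2c\ones_X$. Next I would extract the tight constraints: for every base $B\in\cB^X$ (using the notation of Theorem \ref{thm:baseserial}), Lemma \ref{lem:BcapX} gives $|B\cap X|=1/c$, hence $u(B)+v(B)=u'(B\cap X)+v'(B\cap X)=2$. Combined with admissibility $u(B),v(B)\geq 1$, this forces equality $u'(B\cap X)=v'(B\cap X)=1$ on every such base. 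Now Theorem \ref{thm:baseserial} identifies $\{B\cap X:B\in\cB^X\}$ exactly with $\cB(M/(E-X))$, so in fact $u'(Z)=v'(Z)=1$ for every base $Z$ of $M/(E-X)$.

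The final step is to invoke connectedness: since $X\in\Theta$, the matroid $M/(E-X)$ is connected, so Lemma \ref{lem:twobases} supplies, for any $e_1,e_2\in X$, two bases $Z_1,Z_2\in\cB(M/(E-X))$ with $Z_1-Z_2=\{e_1\}$ and $Z_2-Z_1=\{e_2\}$. Subtracting $u'(Z_2)=1=u'(Z_1)$ then yields $u'(e_1)=u'(e_2)$, so $u'$ is constant on $X$. The common value is determined by $u'(Z)=1=u'(e)\cdot|Z|=u'(e)/c$, forcing $u'=c\ones_X$, and similarly $v'=c\ones_X$. Hence $u=v=\widetilde{\rho}$, proving that $\widetilde{\rho}$ is an extreme point of $\Adm(\cB)$, i.e.\ $X\in\widehat{\cB}$.

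The main obstacle I anticipate is making sure the two structural inputs dovetail cleanly: Theorem \ref{thm:baseserial} must be read to conclude that \emph{every} base $Z$ of $M/(E-X)$ actually appears as $B\cap X$ for some $B\in\cB^X$ (one picks any $W\in\cB(M\setminus X)$ and forms $B=Z\cup W$), and the connectedness of $M/(E-X)$ must be strong enough to produce the base-exchange pair of Lemma \ref{lem:twobases} for every pair of elements in $X$. Once those two facts are pinned down, the extremality argument reduces to the simple linear algebra above.
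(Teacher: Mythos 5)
Your proof is correct and follows essentially the same strategy as the paper's: decompose a hypothetical midpoint representation, force the density to vanish off $X$, use the constraint-tightness on bases to reduce to the contracted matroid $M/(E-X)$, invoke Lemma \ref{lem:twobases} via connectedness to show constancy on $X$, and then pin down the constant. The only cosmetic differences are that you route the base identification through Theorem \ref{thm:baseserial} and Lemma \ref{lem:BcapX} (and thereby show $u'(Z)=v'(Z)=1$ for \emph{all} bases $Z$ of the contraction at once, rather than just the two exchange bases), while the paper lifts the two exchange bases directly via Proposition \ref{pro:contraction}; and your computation of the constant via $u'(Z)=u'(e)\,|Z|$ makes the $|X|=1$ case fall out uniformly, whereas the paper treats it as a separate case. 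One small imprecision: Lemma \ref{lem:BcapX} by itself only gives $|B\cap X|\geq r(E)-r(E-X)$; the equality for $B\in\cB^X$ uses the defining property $|B-X|=r(E-X)$ together with $|B|=r(E)$, but this is exactly the content anyway.
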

\begin{proof}
Every $X\in\Theta$ is non-empty and complement-closed, so $r(E) - r(E - X)>0$. Hence, by Remark \ref{rem:base-admissibility}, we have that $\Theta \subset \Adm(\cB)$.

Consider $X \in \Theta$, we aim to show that 
\[w :=\frac{1}{r(E) - r(E - X)}\ones_{X} \in \Ext(\Adm(\cB))=\widehat{\cB}.\]
Assume that there are two densities $\rho_1,\rho_2 \in \Adm(\cB)$ such that 
\[ w = \frac{1}{2}(\rho_1+\rho_2).\]
For every element $e \in E - X$, we have $\widetilde{\cN}(X,e)=0$, so 
\[\frac{1}{2}(\rho_1(e)+\rho_2(e))=w(e)= 0 \Rightarrow \rho_1(e)=\rho_2(e)=0.\] 
Now, assume that $|X| \geq 2$. Let $e_1$ and $e_2$ be two arbitrary distinct elements in $ X$. Since $M/(E-X)$ is connected, by Lemma \ref{lem:twobases}, there exists two bases $B_1,B_2$ of $M/(E-X)$ such that $B_1 -B_2 = \lbr e_1 \rbr$ and $B_2 -B_1 = \lbr e_2 \rbr $. By part 3 of Proposition \ref{pro:contraction}, there exists $B'_1 \in \cB(M)$ and $B'_2\in \cB(M)$ such that $B_1 =  B'_1 - (E-X)$ and $B_2 = B'_2 - (E-X)$. 
		
For any given density $\rho \in \R^E_{\geq 0 }$ and $A \subset E$, denote $\rho(A) := \sum\limits_{e\in A}\rho(e)$.
Since  $\rho_1,\rho_2 \in \Adm(\cB)$, we have
\[\rho_1(B'_1),\rho_1(B'_2) ,\rho_2(B'_1),\rho_2(B'_2) \geq 1.\] 
Since $\rho_1+\rho_2 = 2w$, we have  $\rho_1(B'_1)+\rho_2(B'_1) = 2w(B'_1) =  2w(B_1) = 2$ and  $\rho_1(B'_2)+\rho_2(B'_2) = 2w(B'_2) =2w(B_2) =  2$. This implies 
\[\rho_1(B'_1)=\rho_2(B'_1)=\rho_1(B'_2)=\rho_2(B'_2)=1.\]
Since  $\rho_1(e)=\rho_2(e)=0 $ for all $e \in E-X$, we obtain  
\[\rho_1(B_1)=\rho_2(B_1)=\rho_1(B_2)=\rho_2(B_2)=1.\]
Consequently, 
\begin{align*}
\rho_1(e_1)+\rho_1( B_1 \cap B_2)= \rho_1(e_2)+\rho_1( B_1 \cap B_2)= 1, \\
\rho_2(e_1)+\rho_2( B_1 \cap B_2)=\rho_2(e_2)+\rho_2( B_1 \cap B_2) =1.
\end{align*}
Hence, $\rho_1(e_1)=\rho_1(e_2)$ and $\rho_2(e_1)=\rho_2(e_2)$. Thus, since $e_1$ and $e_2$ were chosen arbitrarily, we conclude that $\rho_1$, $\rho_2$ are constant in $X$. Given that $\rho_1(B_1) =\rho_2(B_1) =1$ and $B_1$ has $r(E) -r(E-X)$ elements, we derive that 
		
\[\rho_1(e)= \rho_2(e) = \frac{1}{r(E) -r(E-X)}=w(e), \quad \forall e \in X.\]
Therefore, $\rho_1=\rho_2=w $.  So we have shown that $w$ is an extreme point of $\Adm(\cB)$.
		
If $|X|= 1$, suppose $X =\lbrace e \rbrace$. Let $B$ be a base $M$ that contains $e$. Using the same argument as above, we have $\rho_1(e) = \rho_2(e) = \frac{1}{r(E)-r(E-X)} = 1$. Therefore,  $\rho_1=\rho_2=w $, and once again, $w$ is an extreme point of $\Adm(\cB)$.	
In conclusion, $w \in \widehat{\cB}$.
\end{proof}
\begin{remark}
Note that the connectedness of $ M / (E - X) $ implies that $ X $ is complement-closed. This is because if $ E - X $ is not closed, then $ M / (E - X) $ will have some loops, which act as nontrivial separators.
\end{remark}
		
\begin{proposition}[\cite{mat}]\label{pro:se1}
For a matroid $ M(E,\cI) $, a set $ X \subseteq E $ is a separator of $ M $ if and only if $ r(X) + r(E - X) = r(E) $.
\end{proposition}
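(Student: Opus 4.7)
The plan is to prove both directions of this classical rank identity. The forward direction relies on submodularity and on gluing together independent sets on the two sides of $X$, while the reverse direction exploits the uniqueness of the fundamental circuit from part 2 of Proposition \ref{pro:basic}.

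For the forward direction, assume $X$ is a separator. Submodularity (part 6 of Proposition \ref{pro:basic}) applied to $X$ and $E-X$ gives $r(X) + r(E-X) \geq r(E) + r(\emptyset) = r(E)$. For the opposite inequality, I would take $I_X$ a base of $M|X$ and $I_{E-X}$ a base of $M|(E-X)$, so that $|I_X| = r(X)$ and $|I_{E-X}| = r(E-X)$, and argue that $I := I_X \cup I_{E-X}$ is independent. Indeed, any circuit $C$ contained in $I$ must, by the separator hypothesis, satisfy $C \subseteq X$ or $C \subseteq E-X$, forcing $C \subseteq I_X$ or $C \subseteq I_{E-X}$ and contradicting the independence of one of these sets. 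Therefore $r(E) \geq |I| = r(X) + r(E-X)$, yielding the desired equality.

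For the reverse direction, assume $r(X) + r(E-X) = r(E)$ and suppose for contradiction that some circuit $C$ meets both $X$ and $E-X$. The first key step is to observe that every base $B \in \cB(M)$ satisfies $|B \cap X| = r(X)$ and $|B \cap (E-X)| = r(E-X)$: each intersection is independent, so $|B \cap X| \leq r(X)$ and $|B \cap (E-X)| \leq r(E-X)$, while their sum equals $|B| = r(E) = r(X) + r(E-X)$, forcing equality in both. Now pick $e \in C \cap X$; since $C - \{e\}$ is independent, extend it to a base $B$ of $M$. Note $e \notin B$, or else $C \subseteq B$ would contradict the independence of $B$. Since $B \cap X$ is a base of $M|X$ and $e \in X \setminus (B\cap X)$, the set $(B \cap X) \cup \{e\}$ is dependent and therefore contains some circuit $C' \subseteq (B \cap X) \cup \{e\} \subseteq X$ through $e$. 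On the other hand, $C \subseteq B \cup \{e\}$ is also a circuit in $B \cup \{e\}$ through $e$, so by the uniqueness of the fundamental circuit (part 2 of Proposition \ref{pro:basic}), $C = C'$, contradicting the assumption that $C \cap (E-X) \neq \emptyset$.

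The main subtlety is not in any single step but in the interplay between the rank equality and the fundamental circuit: once one extracts the fact that every base splits along the $X$/$E-X$ partition in the balanced way $|B \cap X| = r(X)$, the uniqueness of the fundamental circuit supplies the contradiction almost immediately. No step appears to present a serious obstacle.
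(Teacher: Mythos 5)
Your proof is correct and self-contained. Note that the paper does not prove this proposition at all; it simply cites it from the reference \cite{mat} (a standard matroid theory text), so there is no in-paper argument to compare against. Both directions of your argument are sound and use only facts already recorded in Proposition \ref{pro:basic}: the forward direction combines submodularity (part 6) with the observation that a union of maximal independent subsets of $X$ and $E-X$ stays independent when every circuit lies wholly in one side; the reverse direction deduces from the rank identity that every base $B$ splits as $|B\cap X| = r(X)$ and $|B\cap(E-X)| = r(E-X)$, then uses the uniqueness of the fundamental circuit $C(e,B)$ (part 2) to force any circuit through $e\in C\cap X$ to be contained in $X$, giving the contradiction. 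One minor step you glide over is why $B\cap X$ is a base of $M|X$: this follows because it is independent (being a subset of a base) and has cardinality $r(X)=r_{M|X}(X)$, so part 8 of Proposition \ref{pro:basic} applies in the restriction. With that small remark made explicit, the argument is airtight.
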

		
Using Proposition \ref{pro:se1}, we derive the following lemma:
\begin{lemma}\label{lem:cl3}
We have $ \Phi \cap \widehat{\cB} \subseteq \Theta $.
\end{lemma}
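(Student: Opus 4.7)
The plan is to prove the contrapositive: if $X \in \Phi$ but the matroid $M / (E-X)$ is disconnected, then the usage vector $w := \frac{1}{r(E)-r(E-X)}\ones_X$ fails to be an extreme point of $\Adm(\cB)$, so $X \notin \widehat{\cB}$. Disconnectedness of $M/(E-X)$ yields a nontrivial separator, i.e., a partition $X = X_1 \sqcup X_2$ with both parts nonempty such that every circuit of $M/(E-X)$ sits inside one part. Applying Proposition \ref{pro:se1} inside $M/(E-X)$ and translating through the contraction rank formula Proposition \ref{pro:contraction}(4), using the identity $(E-X) \cup X_i = E - X_j$ for $\{i,j\} = \{1,2\}$, the separator condition rewrites as the rank identity
\begin{equation}\label{eq:starstar}
r(E-X_1) + r(E-X_2) = r(E) + r(E-X).
\end{equation}

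Next, I will exhibit $w$ as a nontrivial convex combination of two admissible densities. Because $X \in \Phi$ is complement-closed, no element of $X$ is a loop of $M/(E-X)$, so every nonempty subset of $X$ has positive rank there; applied to $X_1$ and $X_2$ individually this gives $r(E-X_2) > r(E-X)$ and $r(E-X_1) > r(E-X)$. Combined with \eqref{eq:starstar}, we also deduce $r(E-X_i) < r(E)$ for both $i = 1, 2$, so Remark \ref{rem:base-admissibility} ensures that the densities
\[
\rho_i := \frac{1}{r(E)-r(E-X_i)}\ones_{X_i}, \quad i = 1,2,
\]
both lie in $\Adm(\cB)$. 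Setting $t := \frac{r(E)-r(E-X_1)}{r(E)-r(E-X)}$, the identity \eqref{eq:starstar} gives $1-t = \frac{r(E)-r(E-X_2)}{r(E)-r(E-X)}$, and the inequalities above force $t \in (0,1)$. A coordinatewise comparison on $X_1$, on $X_2$, and on $E-X$ shows $w = t\rho_1 + (1-t)\rho_2$. Since $\rho_1$ and $\rho_2$ have disjoint nonempty supports, they differ from one another and from $w$, so this is a genuine convex decomposition and $w \notin \Ext(\Adm(\cB))$.

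I expect the main subtlety to be the rank accounting behind \eqref{eq:starstar} and the observation that complement-closedness of $X$ is exactly what is needed to rule out loops in $M / (E-X)$ and thereby force both coefficients $t$ and $1-t$ to be strictly positive; once these two ingredients are in place, the convex combination is immediate and the conclusion is automatic.
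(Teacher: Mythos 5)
Your proposal is correct and follows essentially the same route as the paper: use a nontrivial separator $X = X_1 \sqcup X_2$ of $M/(E-X)$, translate the separator rank identity of Proposition \ref{pro:se1} through Proposition \ref{pro:contraction}(4) to obtain $r(E-X_1)+r(E-X_2)=r(E)+r(E-X)$, use complement-closedness of $X$ to get $r(E-X_i)>r(E-X)$ and hence strictly positive coefficients, and exhibit $w$ as a nontrivial convex combination of the two admissible densities supported on $X_1$ and $X_2$. The only cosmetic difference is that the paper writes $A$ and $D=X-A$ for your $X_1,X_2$, and argues by contradiction rather than by contrapositive, but the rank bookkeeping and the decomposition are the same.
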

		
\begin{proof}
Let $X \in \Phi \cap \widehat{\cB}$. Assume that the matroid $M/(E-X)$ is not connected. Then, let $A$ be a nontrivial separator of $M/(E-X)$ and define $D := X-A$.
Using part 4 of Proposition \ref{pro:contraction}, we derive that
\begin{equation}\label{eq:rank-contract-A}
r_{M / (E-X)}(A) = r_{M}(E-D) - r_{M}(E-X),
\end{equation}
\begin{equation*}
r_{M / (E-X)}(D) = r_{M}(E-A) - r_{M}(E-X),
\end{equation*}
and
\begin{equation*}
r_{M / (E-X)}(X) = r_{M}(E) - r_{M}(E-X).
\end{equation*}
		
Since $X$ is nonempty and complement-closed, $E-X$ is closed in $M$. Therefore, since $E-D \supsetneq E-X$, we deduce $r(E-D) > r(E-X)$, which, by (\ref{eq:rank-contract-A}), implies that $r_{M / (E-X)}(A) >0$. Similarly, we also have  $r_{M / (E-X)}(D) >0$. Consequently, by Proposition \ref{pro:se1}, we have
\begin{equation}\label{eq:positive-rank-D}
 r_M(E) -  r_M(E-D) = r_{M / (E-X)}(X) - r_{M / (E-X)}(A) = r_{M / (E-X)}(D) >0,
 \end{equation}
\begin{equation}\label{eq:positive-rank-A}
r_M(E) -  r_M(E-A) = r_{M / (E-X)}(X) - r_{M / (E-X)}(D) = r_{M / (E-X)}(A) >0,
\end{equation}
and adding these two identities and using Proposition \ref{pro:se1} again, we obtain
\begin{equation}\label{eq:sum-of-ranks}
 2r_M(E) -  r_M(E-A)  -  r_M(E-D) = r_{M / (E-X)}(X)  = r_M(E) - r_M(E-X).
 \end{equation}
Therefore, we have that $\frac{1}{r_M(E) - r_M(E - X)}\ones_{X}$ is equal to
\begin{align*}
\frac{r_M(E) - r_M(E -A)}{r_M(E) - r_M(E - X)}\cdot \frac{1}{r_M(E) - r_M(E - A)}\ones_{A} + \frac{r_M(E) - r_M(E - D)}{r_M(E) - r_M(E - X)}\cdot\frac{1}{r_M(E) - r_M(E - D)}\ones_{D}.
\end{align*}
Hence, by (\ref{eq:sum-of-ranks}), we see that $\frac{1}{r_M(E) - r_M(E - X)}\ones_{X}$ is a convex combination of $\frac{1}{r(E) - r(E - A)}\ones_{A}$ and $\frac{1}{r(E) - r(E - D)}\ones_{D}$ and these two vectors are in $\Adm(\cB)$, by (\ref{eq:positive-rank-D}) and (\ref{eq:positive-rank-A}) and Remark \ref{rem:base-admissibility}. This is a contradiction, since $X \in \widehat{\cB}$.
\end{proof}

\section{Modulus for dual matroids}\label{sec:dualmatroid}
	
Given a loopless matroid $ M(E,\cI) $ with $ r(E) > 0 $, let $ \cB $ be the base family of $ M $. We recall that the set
\[\cB^* = \left\{ X \subseteq E : \text{there exists a base } B \in \cB \text{ such that } X = E - B \right\}\]
is the family of bases of the dual matroid $ M^* $ of $M$ with the rank function $ r^* $. We also have that $ r^*(E) = |E| - r(M) $. The dual of the dual of a matroid $ M $ is the matroid $ M $ itself, in other words, $ (M^*)^* = M $.
	
For a loopless matroid $ M $, the dual matroid $ M^* $ is not necessarily loopless. If we consider the base modulus for a matroid with loops, the optimal element usage probabilities $ \eta^* $ of those loops are zero because they are not contained in any bases. Furthermore, the dual matroid $ M^* $ does not necessarily have positive rank. Specifically, $ r^*(E) > 0 $ if and only if $ r(E) < |E| $.

Note that for a loopless matroid $ M(E,\cI) $, by Lemma \ref{lem:positive}, we have $ 0 < \eta^*(e) \leq 1 $ for all $ e \in E $, where $ \eta^* $ are the optimal element usage probabilities. In this section, we investigate the relationships between the base modulus of a matroid and the base modulus of its dual. Consider a loopless matroid $ M(E,\cI) $ with $ r(E) > 0 $. If $ r(E) = |E| $, then the dual matroid $ M^* $ has rank zero. Now, assuming $ r(E) < |E| $, the concept of dual matroids motivates the following theorem.
	
\begin{theorem}\label{thm:dualmat}
Let $ E $ be a finite set. Let $ \Gamma $ be a family of non-empty proper subsets of $ E $ of equal size with usage vectors given by indicator functions. Define $ \Gamma^* := \{ E - \gamma : \gamma \in \Gamma \} $ with usage vectors given by indicator functions.  Let $\widehat{\Ga}$ and $\widehat{\Ga^*}$ be the Fulkerson blocker families of $\Ga$ and $\Ga^*$, respectively. Let $ \eta^* $ and $ \eta_{\circ}^* $ be the optimal densities for $ \Mod_2(\widehat{\Gamma}) $ and $ \Mod_2(\widehat{\Gamma^*}) $, respectively. Then we have 
\[\eta^* + \eta_{\circ}^* = \mathbf{1},\]
where $ \mathbf{1} $ is the vector of all ones.
\end{theorem}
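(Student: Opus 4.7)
The plan is to exploit the bijection between $\Ga$ and $\Ga^*$ given by complementation, combined with the $\MEO$--$\Mod_2$ equivalence of equations (\ref{eq:modmeo})--(\ref{eq:eta-rho}), which applies to any family of subsets and in particular to $\Ga$ and $\Ga^*$.

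First, I would define the complement map $\phi:\Ga\rightarrow\Ga^*$ by $\phi(\ga)=E-\ga$, which is a bijection since $\Ga^*$ is defined exactly as its image. For any pmf $\mu\in\cP(\Ga)$, define the pushforward $\mu^{\circ}\in\cP(\Ga^*)$ by $\mu^{\circ}(E-\ga):=\mu(\ga)$; this yields a bijection between $\cP(\Ga)$ and $\cP(\Ga^*)$. Next, I would compute the element usage probabilities induced by $\mu^{\circ}$: letting $\cN$ and $\cN^*$ denote the usage matrices of $\Ga$ and $\Ga^*$ respectively, for any $e\in E$,
\[
(\cN^{*T}\mu^{\circ})(e)=\sum_{\ga\in\Ga}\ones_{\{e\in E-\ga\}}\mu(\ga)=1-\sum_{\ga\in\Ga}\ones_{\{e\in\ga\}}\mu(\ga)=1-(\cN^T\mu)(e).
\]

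The second step is to show that $\mu^{\circ}$ is optimal for $\MEO(\Ga^*)$ whenever $\mu$ is optimal for $\MEO(\Ga)$. Since every $\ga\in\Ga$ has cardinality exactly $k$, for any $\ga,\ga'\in\Ga$,
\[
|(E-\ga)\cap(E-\ga')|=|E|-|\ga\cup\ga'|=|E|-2k+|\ga\cap\ga'|.
\]
Taking expectations under $\mu^{\circ}\otimes\mu^{\circ}$ on the left and $\mu\otimes\mu$ on the right yields
\[
\MEO\text{-value of }\mu^{\circ}=|E|-2k+\MEO\text{-value of }\mu.
\]
Because the map $\mu\mapsto\mu^{\circ}$ is a bijection $\cP(\Ga)\to\cP(\Ga^*)$, minimizing either side is equivalent to minimizing the other, so $\mu$ is optimal for $\MEO(\Ga)$ if and only if $\mu^{\circ}$ is optimal for $\MEO(\Ga^*)$.

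Finally, I would combine these two steps. Pick an optimal pmf $\mu^*$ for $\MEO(\Ga)$; by the $\MEO$--modulus equivalence, $\eta^*=\cN^T\mu^*$ is the unique optimal density for $\Mod_2(\widehat{\Ga})$. By Step 2, $(\mu^*)^{\circ}$ is optimal for $\MEO(\Ga^*)$, so $\cN^{*T}(\mu^*)^{\circ}$ is the unique optimal density for $\Mod_2(\widehat{\Ga^*})$, i.e.\ equals $\eta_{\circ}^*$. The identity from Step 1 then reads $\eta_{\circ}^*=\one-\eta^*$, which is the desired conclusion. The only potentially delicate point is to ensure that the probabilistic interpretation (\ref{eq:modmeo})--(\ref{eq:eta-rho}) genuinely applies to the families $\Ga$ and $\Ga^*$ as given, but this is immediate from the hypotheses that both are families of nonempty subsets of $E$ ($k>0$ and $|E|-k>0$), so each object uses at least one element with a positive and finite amount, as required by the modulus framework.
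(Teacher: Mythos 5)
Your proof is correct but takes a genuinely different route from the paper. The paper argues in the primal picture: it writes $\rho^*=\eta^*/\cE(\eta^*)$ and $\rho_\circ^*=\eta_\circ^*/\cE(\eta_\circ^*)$, manipulates the admissibility conditions $\cN\rho^*\ge\one$ and $(\one_{|\Ga|\times|E|}-\cN)\rho_\circ^*\ge\one$ algebraically to show that $\frac{\one-\eta_\circ^*}{\cE(\one-\eta_\circ^*)}$ is admissible for $\Ga$ and $\frac{\one-\eta^*}{\cE(\one-\eta^*)}$ is admissible for $\Ga^*$, then compares energies and squeezes the two resulting inequalities to force equality, concluding by uniqueness of the extremal density. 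You instead work entirely on the dual/probabilistic side: the identity $|(E-\ga)\cap(E-\ga')|=|E|-2k+|\ga\cap\ga'|$ shows that the MEO objective transforms affinely under the complementation bijection, so optimal pmfs correspond under pushforward, and the element-usage identity $\cN^{*T}\mu^\circ=\one-\cN^T\mu$ gives the conclusion immediately via the characterization that $\mu$ is optimal for $\MEO$ iff $\cN^T\mu=\eta^*$ (the paper's (\ref{eq:mod2}) combined with (\ref{eq:eta-rho}), which apply to arbitrary families, not just base families). Your argument is shorter and makes the underlying symmetry transparent; it also sidesteps the need to verify that $\one-\eta_\circ^*$ is nonnegative and nonzero, since $\cN^{*T}(\mu^*)^\circ$ is automatically a vector of probabilities with total mass $|E|-k>0$. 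The paper's approach has the minor advantage of staying purely within density/energy language, which matches the flavor of the earlier sections, but at the cost of more bookkeeping. Both proofs ultimately hinge on the constant-cardinality hypothesis $|\ga|=k$, which you use via the affine overlap identity and the paper uses via the row-sum condition $\cN\one=k\one$.
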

	
\begin{proof}
Let $\cN$ be the usage matrix of $\Gamma$. Then, all row sums of $\cN$ are equal to $k$. Let $\one_{|\Gamma| \times |E|}$ be the $|\Gamma| \times |E|$ matrix of all ones. Consequently, $\one_{|\Ga| \times |E|} - \cN$ is the usage matrix of $\Ga^*$.
		
Note that by the proof of Lemma \ref{lem:fixedsum}, we have 
\begin{equation}\label{eq:sum-eta}
	(\eta^*)^T\one = k,
\end{equation} 
and
\begin{equation}\label{eq:sum-etastar}
 (\eta_{\circ}^*)^T\one = |E| - k.
\end{equation}
Additionally, we have 
\begin{equation}\label{eq:N-ones}
	\cN \one = k\one, \quad 
	(\one_{|\Ga| \times |E|})\eta^* = k\one, \quad 
	(\one_{|\Ga| \times |E|})\eta_{\circ}^* = (|E| - k)\one.
\end{equation}

By Theorem \ref{thm:meomod}, we have that $\rho^* = \eta^*/\cE(\eta^*)$ is the optimal solution for $\Mod_2(\Ga)$ and  $\rho_{\circ}^* = \eta_{\circ}^*/\cE(\eta_{\circ}^*)$ is the optimal solution for $\Mod_2(\Ga^*)$. Hence, by admissibility, we have 
\begin{equation}\label{eq:adm1}
\cN\frac{\eta^*}{\cE(\eta^*)} \geq \one,
\end{equation}
and
\begin{equation}\label{eq:adm2}
\left( \one_{|\Ga| \times |E|} - \cN\right)\frac{\eta_{\circ}^*}{\cE(\eta_{\circ}^*)} \geq \one,
\end{equation}
where these two inequalities hold element-wise.
Let $a = \one- \eta_{\circ}^* \leq \one$, then
\begin{align*}
\left( \one_{|\Ga| \times |E|} - \cN\right)\frac{\eta_{\circ}^*}{\cE(\eta_{\circ}^*)} &=\left( \one_{|\Ga| \times |E|} - \cN\right)\frac{\one-a}{\cE(\one-a)} &\\  
&=\frac{|E|\one - k\one -k\one+\cN a}{\cE(\one-a)} &(\text{by (\ref{eq:N-ones})})\\
& =\frac{(|E|-2k)\one+\cN a}{|E| -2k + \cE(a)},&(\text{by (\ref{eq:sum-etastar})})
\end{align*}
where in the denominator we expanded the squares and used (\ref{eq:sum-etastar}).
Therefore, (\ref{eq:adm2}) is equivalent to 
\begin{equation*}
(|E|-2k)\one+\cN a \geq (|E| -2k + \cE(a))\one
\end{equation*}
which we rewrite as, 
\begin{equation*}
\cN\frac{ (\one -\eta_{\circ}^*)}{\cE(\one -\eta_{\circ}^*)} =	\frac{\cN a}{\cE(a)} \geq \one,
\end{equation*}  
Similarly, the inequality (\ref{eq:adm1}) is equivalent to 
\begin{equation*}
\left( \ones_{|\Ga| \times |E|} - \cN\right)\frac{\one - \eta^*}{\cE(\one - \eta^*)} \geq \one.
\end{equation*}
In other words, the vector $\frac{ \one -\eta_{\circ}^*}{\cE(\one -\eta_{\circ}^*)}$ is admissible for $\Ga$ and $\frac{\one - \eta^*}{\cE(\one - \eta^*)}$ is admissible for $\Ga^*$. Hence, since $\rho^*$ is optimal for $\Mod_2(\Ga)$, we have
\begin{align}\label{eq:e1}
\frac{1}{\cE(\one -\eta_{\circ}^*)} &= \cE \left(\frac{ \one -\eta_{\circ}^*}{\cE(\one -\eta_{\circ}^*)} \right) \geq \cE(\rho^*) = \frac{1}{\cE(\eta^*)},
\end{align}
and, since $\rho_{\circ}^*$ is optimal for $\Mod_2(\Ga^*)$,
\begin{align}\label{eq:e2}
\frac{1}{\cE(\one -\eta^*)} &= \cE \left(\frac{ \one -\eta^*}{\cE(\one -\eta^*)} \right) \geq \cE(\rho_{\circ}^*) = \frac{1}{\cE(\eta_{\circ}^*)}.
\end{align}
These are equivalent to
\begin{align*}
	\cE(\eta^*) \geq \cE(\one - \eta_{\circ}^*) &= |E| -2(|E|-k) +\cE(\eta_{\circ}^*),& (\text{by (\ref{eq:sum-etastar})})
\end{align*}
and 
\begin{align*}
	\cE(\eta_{\circ}^*) \geq \cE(\one - \eta^*) &= |E| -2k +\cE(\eta^*).& (\text{by (\ref{eq:sum-eta})})
\end{align*}
Consequently, we have shown that		
\begin{equation*}
\cE(\eta^*)  - k \geq \cE(\eta_{\circ}^*) -(|E|-k) \geq	\cE(\eta^*)  - k.
\end{equation*}
		
Therefore, two inequalities in (\ref{eq:e1}) and (\ref{eq:e2}) holds as equalities. Hence, by uniqueness of $\eta^*$ and $\eta_{\circ}^*$, we obtain that $\one - \eta_{\circ}^* = \eta^*$.	
\end{proof}

Applying this to the case when $\Ga$ is the base family of a matroid, we recover a known result \cite{catlin1992} about strength and fractional arboricity of dual matroids.
\begin{corollary}
Let $M=(E,\cI)$ be a loopless matroid and $0 <r(M)< |E|$. Let $M^*$ be the dual matroid of $M$. Let $\eta^*$ be the optimal density for $\Mod_2(\widehat{\cB(M)})$ and let $\eta_{\circ}^*$ be the optimal density for $\Mod_2(\widehat{\cB(M^*)})$. Then, we have \[\eta^* + \eta_{\circ}^* =\one,\] where $\one$ is the vector of all ones.
Moreover, if the dual matroid $M^*$ is loopless, we have 
\begin{equation*}
\frac{1}{D(M)}+\frac{1}{S(M^*)} = 1,
\end{equation*}
and
\begin{equation*}
\frac{1}{S(M)}+\frac{1}{D(M^*)} = 1.
\end{equation*}
\end{corollary}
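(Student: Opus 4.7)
The plan is to derive this corollary directly from Theorem \ref{thm:dualmat}, together with Theorems \ref{thm:max} and \ref{thm:min} which identify $1/S(M)$ and $1/D(M)$ with the max and min of $\eta^*$.

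First, I would verify that the hypotheses of Theorem \ref{thm:dualmat} apply with $\Ga = \cB(M)$. By Proposition \ref{pro:basic} (8), every base has size $r(E)$, so all objects of $\cB(M)$ have the common cardinality $k = r(M)$, and the assumption $0 < r(M) < |E|$ gives $0 < k < |E|$. By the definition of the dual matroid, $\Ga^* = \{ E - B : B \in \cB(M) \} = \cB(M^*)$. Theorem \ref{thm:dualmat} then immediately yields
\begin{equation*}
\eta^* + \eta_{\circ}^* = \one,
\end{equation*}
proving the first claim (note that here the dual matroid is allowed to have loops; the coordinates of $\eta^*$ corresponding to loops of $M^*$ are simply equal to $1$).

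Now assume in addition that $M^*$ is loopless. Since $\eta_{\circ}^*(e) = 1 - \eta^*(e)$ for every $e \in E$, taking maxima and minima gives
\begin{equation*}
(\eta_{\circ}^*)_{\max} = 1 - \eta^*_{\min} \qquad\text{and}\qquad (\eta_{\circ}^*)_{\min} = 1 - \eta^*_{\max}.
\end{equation*}
By Theorem \ref{thm:max} applied to $M$ and $M^*$, we have $\eta^*_{\max} = 1/S(M)$ and $(\eta_{\circ}^*)_{\max} = 1/S(M^*)$; similarly Theorem \ref{thm:min} gives $\eta^*_{\min} = 1/D(M)$ and $(\eta_{\circ}^*)_{\min} = 1/D(M^*)$. (Both applications of Theorem \ref{thm:min} need looplessness to ensure $\eta^*_{\min}>0$ and $(\eta_{\circ}^*)_{\min}>0$, which is why the extra hypothesis appears.) Substituting into the two displayed identities gives
\begin{equation*}
\frac{1}{S(M^*)} = 1 - \frac{1}{D(M)} \qquad\text{and}\qquad \frac{1}{D(M^*)} = 1 - \frac{1}{S(M)},
\end{equation*}
which are the two numerical identities in the statement.

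No step here appears to require significant new work; the main content has already been packaged into Theorem \ref{thm:dualmat} and the characterizations of $S(M), D(M)$ via extremal values of $\eta^*$. The only point that deserves some care is confirming that the looplessness of $M^*$ is precisely what is needed so that the formulas of Theorems \ref{thm:max} and \ref{thm:min} apply to $M^*$, and checking that when $M^*$ does have loops, the first identity $\eta^*+\eta_{\circ}^*=\one$ still makes sense because base modulus for a matroid with loops simply assigns probability $0$ to loop coordinates.
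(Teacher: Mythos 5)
Your proposal is correct and follows essentially the same route as the paper's proof: invoke Theorem \ref{thm:dualmat} with $\Ga = \cB(M)$ (noting all bases have size $r(M)$ with $0 < r(M) < |E|$, and $\Ga^* = \cB(M^*)$), then read off the numerical identities from the characterizations $\eta^*_{\max} = 1/S(M)$, $\eta^*_{\min} = 1/D(M)$ in Theorems \ref{thm:max} and \ref{thm:min}, applied to both $M$ and (under the added looplessness hypothesis) $M^*$. The extra remarks you include about why looplessness of $M^*$ is needed, and what happens at coloops of $M$ when $M^*$ is not loopless, are correct and are a useful clarification of the paper's terse argument.
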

	
\begin{proof}
Given that $M$ is loopless, we have
\[\eta^*_{\min} = \frac{1}{D(M)} \quad \text{and} \quad \eta^*_{\max} = \frac{1}{S(M)}.\]
If $M^*$ is also loopless, we get
\[(\eta_{\circ}^*)_{\min} = \frac{1}{D(M^*)} \quad \text{and} \quad (\eta_{\circ}^*)_{\max} = \frac{1}{S(M^*)}.\]
The proof follows by applying Theorem \ref{thm:dualmat}.
\end{proof} 
	
\section{Deflation process and other values of $p$}\label{sec:otherp}
Let $M(E,\cI)$ be a matroid with base family $\cB =\cB(M)$.  We want to show that $\Mod_p(\cB)$ can be deduced from $\Mod_2(\cB)$. 
\begin{theorem}
	Let $M(E,\cI)$ be a matroid with base family $\cB =\cB(M)$.  Let $\widetilde{\cB}$ be a Fulkerson dual family of $\cB$. Let $p \in (1, \infty) \setminus \lbr 2\rbr.$ Let $q$ be the Hölder conjugate exponent of $p$, so that $(p-1)(q-1) = 1$. Then  $\Mod_2(\widetilde{\cB})$ and  $\Mod_q(\widetilde{\cB})$ have the same optimal density $\eta^*$. Moreover,
	
	\begin{equation*}
		\Mod_p(\cB) =  \left(\sum\limits_{e \in E} (\eta^*(e))^q \right)^{1-p}.
	\end{equation*}
	
\end{theorem}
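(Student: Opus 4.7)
The plan is to reduce both $\Mod_2(\widetilde{\cB})$ and $\Mod_q(\widetilde{\cB})$ to minimizing a separable strictly convex function over the base polytope $\co(\cB)$, and then to identify their common minimizer as the universal base $b^*$ of the associated polymatroid $P(E,r)$ discussed in Section \ref{sec:pp}.

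First I would reduce the minimization from $\Adm(\widetilde{\cB})$ down to $\co(\cB)$. By Fulkerson duality, see (\ref{eq:dom-adm-block}) together with Definition \ref{def:fulkerson-dual}, $\Adm(\widetilde{\cB}) = \Dom(\cB) = \co(\cB) + \R^E_{\geq 0}$, which does not depend on the exponent. For any $r > 1$, the map $t \mapsto t^r$ is strictly increasing on $[0,\infty)$, so writing any $\eta = b + s \in \Dom(\cB)$ with $b \in \co(\cB)$ and $s \in \R^E_{\geq 0}$ yields $\cE_r(b) \leq \cE_r(\eta)$, with equality if and only if $s = 0$. Hence the optimal density for $\Mod_r(\widetilde{\cB})$ lies in $\co(\cB)$ for both $r = 2$ and $r = q$.

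Next I would identify the minimizer as $b^*$ by invoking Fujishige's theorem \cite{fujishige1980lexicographically}, as cited in Section \ref{sec:pp}: the universal base $b^*$ is the unique minimizer over $\co(\cB)$ of any separable strictly convex function $\sum_e f(\eta(e))$. Since $\cE_r$ is exactly such a function for every $r > 1$, the minimizer is $b^*$ in both the $r=2$ and $r=q$ cases, so the two optimal densities coincide.

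The main obstacle is the step of identifying the $r=q$ minimizer with $b^*$. If a self-contained argument is desired, I would use a first-order/exchange argument: at any minimizer $\eta^* \in \co(\cB)$ of $\cE_r$, if $e_y - e_x$ is a feasible direction at $\eta^*$, then the directional derivative $r(\eta^*(y)^{r-1} - \eta^*(x)^{r-1})$ must be nonnegative, and strict monotonicity of $t \mapsto t^{r-1}$ forces $\eta^*(y) \geq \eta^*(x)$, a condition that does not involve $r$. Uniqueness of a base in $\co(\cB)$ satisfying these feasible-direction orderings can then be extracted from the principal-partition decomposition described in Section \ref{sec:pp} together with the serial rule (Theorem \ref{thm:serialmodmod}), which forces $\eta^*$ to be constant on each partition block, with those constants determined by $r$-independent ratios of ranks and cardinalities.
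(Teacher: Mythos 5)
Your approach is correct but takes a genuinely different route. The paper's proof runs the deflation process to decompose $M$ into homogeneous minors $M_1,\dots,M_k$, explicitly constructs a candidate density $\rho := (\eta^*)^{q-1}/\cE_q(\eta^*)$ for $\Mod_p(\cB)$, verifies admissibility by an exchange argument, and closes the resulting inequality chain with Fulkerson duality; this yields as a byproduct the explicit formula $\Mod_p(\cB)=\bigl(\sum_j r(M_j)^q/|E_j|^{q-1}\bigr)^{1-p}$. You instead reduce both moduli to minimizing a separable strictly convex function over $\co(\cB)$ and invoke the universal base theorem. That is shorter and more conceptual, but it leans on the general form of Fujishige's result (for arbitrary separable strictly convex objectives), whereas Section \ref{sec:pp} of the paper only records the $L^2$ characterization of the universal base, so a precise citation of the stronger statement would be needed to make the first version self-contained. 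A note on your backup argument: once you have the $r$-independent first-order condition ``$\eta^*(y)\ge\eta^*(x)$ whenever $e_y-e_x$ is a feasible direction,'' you do not need to re-derive uniqueness from the principal-partition decomposition or the serial rule. Since exchange directions $e_y-e_x$ generate the feasible cone at any point of a base polytope, that condition is exactly the first-order condition for $\cE_q$ as well, hence sufficient (by convexity) for $\cE_q$-optimality of the $\cE_2$-minimizer; strict convexity of $\cE_q$ then identifies the two minimizers outright. Either version of your argument is valid, but the paper's buys an explicit value of $\Mod_p(\cB)$ that yours would need to recover separately.
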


\begin{proof}
Let $\eta^*$ be the unique optimal density for $\Mod_2(\widetilde{\cB})$.
We recall $E_{min}$ defined as in (\ref{eq:min}). According to part 1 of Theorem \ref{thm:min}, $E_{min}$ is a complement-Beurling set. Our objective is to iteratively apply Theorem \ref{thm:serialmodmod} to the Beurling set $E-E_{min}$. Initially, we decompose the matroid $M$ into $M_1 := M|E_{min}$ and $M/E_{min}$ by utilizing Theorem \ref{thm:serialmodmod}. Then, $M_1$ is homogeneous, we proceed to further decompose the matroid  $M/E_{min}$ using the minimum value of $\eta^*$, continuing this deflation process. Then, this deflation process
yields the principal partitions of the matroid $M$. Let us denote $M_1,M_2,\dots, M_k$ as the sequence of homogeneous matroids resulting from this process, alongside the corresponding sequence of ground sets $E_1, E_2,\dots, E_k$.

Note that $E_1, E_2,\dots, E_k$ are subsets of $E$, and $\eta^*$ is equal to the constant $\eta^*_j := \frac{r(M_j)}{|E_j|}$ on each $E_j$, where $r(M_j)$ is the rank of matroid $M_j$. Moreover, we have $\eta^*_1 <\eta^*_2 <\dots <\eta^*_k$. Then, we establish that

\begin{equation}
	(\Mod_2(\cB(M)))^{-1} = \sum\limits_{j=1}^{k} \frac{r(M_j)^2}{|E_j|}.
\end{equation}
 The $q$-energy of $\eta^*$ is
\begin{equation}\label{eq:energyq}
	\cE_q(\eta^*) = \sum\limits_{j=1}^{k} |E_j| \left( \frac{r(M_j)}{|E_j|}\right)^q.
\end{equation} 
 We define a density $\rho$ by the formula
 \begin{equation*}
 	\rho := \frac{(\eta^*)^{q-1}}{\cE_q(\eta^*)}.
 \end{equation*}
 We want to check that $\rho \in \Adm(\cB)$. Note that for every fair base $B \in \cB$, we have $|B \cap E_j| = r(M_j)$ for every $j$. This is because since $E_{1}$ is a complement-Beurling set, we have $|B \cap E_1| = r_M(E_1) = r(M_1)$ (by Lemma \ref{lem:BcapX} and part 3 of Proposition \ref{pro:deletion}). Then $B \cap (E-E_{1})$ is a base in $M/E_{1}$ (by part 3 of Proposition \ref{pro:contraction}), and it is also a fair base in $M/E_{1}$ (by Theorem \ref{thm:serialmodmod} (ii)). By induction, we achieve that $|B \cap E_j| = r(M_j)$ for every $j$. Therefore, for any fair base $B$, we have 
\begin{align}\label{eq:total-fair}
	\ell_{\rho}(B) &= \frac{1}{\cE_q(\eta^*)} \sum\limits_{j=1}^{k} r(M_j) \left( \frac{r(M_j)}{|E_j|}\right)^{q-1} \notag\\
	& =\frac{1}{\cE_q(\eta^*)} \sum\limits_{j=1}^{k} |E_j| \left( \frac{r(M_j)}{|E_j|}\right)^q \notag\\
	&=1,
\end{align}
where the last equality follows from (\ref{eq:energyq}).

Let $B$ be a base in $\cB(M)$ that has minimum total usage $\ell_{\rho}(B)$. We have that $|B \cap E_1| \leq r_{M}(E_1) = r(M_1)$ by Lemma \ref{lem:BcapX}. Assume that $|B \cap E_1| < r_{M}(E_1)$. 
Let $T$ be a maximal independent set in $E_1$, then $|T| = r_M(E_1)  > |B \cap E_1|$. Note that  $B \cap E_1$  and $T$ are independent and $(B \cap E_1) \subset E_1$. By the exchange property (I3) of Definition \ref{def:independent-set}, there exists $z \in (T - (B \cap E_1)) = T - B$ such that $(B \cap E_1) \cup \{z\}$ is independent in $M$.
Since $B$ is a base and $z \notin B$, let $C$ be the unique circuit within $B \cup \{z\}$, containing $z$. Since $(B \cap E_1) \cup \{z\}$ is independent in $M$, it follows that $C \not\subseteq (B \cap E_1) \cup \{z\}$. Consequently, $C - E_1 \neq \emptyset$. Let $x$ be an element in $C - E_1$. Then
$\rho(x) > \rho(z)$ by definitions of $\rho$ and $E_1$. Note that $x \in C$ and $x \neq z$, then $x \in B$.
By part 3 of Proposition \ref{pro:basic}, we have $B' := (B - \{ x \}) \cup \{ z \}$ is a base and, recalling the total usage from (\ref{eq:total-usage}), we have
\[\ell_{\rho}(B') = \ell_{\rho}(B) - \rho(x) + \rho(z) < \ell_{\rho}(B).\] This contradicts the definition of $B$. Therefore, we have that $|B \cap E_1| = r(M_1)$. Then $B \cap (E-E_{1})$ is a base in $M/E_{1}$ (by part 3 of Proposition \ref{pro:contraction}). Then, we use the same argument, by induction, we achieve that $|B \cap E_j| = r(M_j)$ for every $j$. Therefore, by (\ref{eq:total-fair}), we have $\ell_p(B)=1$. Thus, $\rho \in \Adm(\cB)$.

Next, since $\rho \in \Adm(\cB)$ and $\eta^* \in \Adm(\widetilde{\cB})$, we have
\begin{align}\label{eq:modq-energyp}
	\Mod_p(\cB) &\leq \cE_p(\rho) \\
	&=\sum\limits_{e\in E} \rho(e)^p =\frac{1}{\cE_q(\eta^*)^p}\sum\limits_{e\in E} (\eta^*(e))^{qp-p}\notag\\
	&=\frac{1}{\cE_q(\eta^*)^p}\sum\limits_{e\in E} (\eta^*(e))^{q}=\cE_q(\eta^*)^{1-p}\notag\\
	&\leq \left( \Mod_q(\widetilde{\cB})\right)^{1-p}.\notag
\end{align}
But Fulkerson duality for $p$-modulus says that 
\begin{equation*}
	\left( \Mod_q(\widetilde{\cB})\right)^{1-p} = \left( \Mod_p(\cB)\right)^{-(q-pq)/p} = \left( \Mod_p(\cB)\right).
\end{equation*} 
Therefore, equality holds in (\ref{eq:modq-energyp}) and we have shown that $\rho$ is extremal for $\Mod_p(\cB)$. By (\ref{eq:weighted-eta-rho}), we have $\eta^*$ is the optimal density for $\Mod_q(\widetilde{\cB})$. In particular, we have the following formula

\begin{equation*}
	\Mod_p(\cB) = \left(\sum\limits_{j=1}^k \frac{r(M_j)^q}{|E_j|^{q-1}} \right)^{1-p}.
\end{equation*}
\end{proof}

\bibliographystyle{acm}
\bibliography{Modulusforbasesofmatroids}
\def\cprime{$'$}
\nocite{*}
	
\end{document}